\DeclareFontFamily{U}{mathx}{\hyphenchar\font45}
\DeclareFontShape{U}{mathx}{m}{n}{
      <5> <6> <7> <8> <9> <10>
     <10.95> <12> <14.4> <17.28> <20.74> <24.88>
    mathx10
      }{}
\DeclareSymbolFont{mathx}{U}{mathx}{m}{n}
\DeclareMathAccent{\widecheck}{\mathalpha}{mathx}{"71}
\newtheorem{thm}{Theorem}[section]
\newtheorem{corollary}[thm]{Corollary}
\newtheorem{lemma}[thm]{Lemma}
\newtheorem{lem}[thm]{Lemma}
\newtheorem{proposition}[thm]{Proposition}
\newtheorem{prop}[thm]{Proposition}
\newtheorem{thm-dfn}[thm]{Theorem-Definition}
\newtheorem*{rmk}{Remark}
\newtheorem{definition}[thm]{Definition}
\newtheorem{remark}[thm]{Remark}
\numberwithin{equation}{section}
\newcommand{\ft}{{\mathfrak t}}
\newcommand{\fl}{{\mathfrak l}}
\newcommand{\fu}{{\mathfrak u}}
\newcommand{\fp}{{\mathfrak p}}
\newcommand{\fa}{{\mathfrak a}}
\newcommand{\fc}{{\mathfrak c}}
\newcommand{\fk}{{\mathfrak k}}
\newcommand{\fm}{{\mathfrak{m}}}
\newcommand{\fh}{{\mathfrak{h}}}
\newcommand{\fF}{{\mathfrak{F}}}
\newcommand{\Lp}{{\mathfrak{p}}}
\newcommand{\Lb}{{\mathfrak{b}}}
\newcommand{\Lt}{{\mathfrak{t}}}
\newcommand{\Lg}{{\mathfrak g}}
\newcommand{\Ln}{{\mathfrak{n}}}
\newcommand{\Ll}{{\mathfrak{l}}}
\newcommand{\fs}{{\mathfrak{s}}}
\newcommand{\rR}{{\mathrm R}}
\newcommand{\rIm}{{\mathrm Im}}
\newcommand{\rC}{{\mathrm C}}
\newcommand{\rCx}{{\mathrm Cx}}
\newcommand{\rf}{{\mathrm f}}
\newcommand{\rn}{{\mathrm n}}
\newcommand{\bC}{{\mathbb C}}
\newcommand{\bG}{{\mathbb G}}
\newcommand{\bZ}{{\mathbb Z}}
\newcommand{\bbR}{{\mathbb R}}
\newcommand{\calF}{{\mathcal F}}
\newcommand{\calH}{{\mathcal H}}
\newcommand{\calT}{{\mathcal T}}
\newcommand{\cB}{{\mathcal B}}
\newcommand{\cO}{{\mathcal O}}
\newcommand{\cA}{{\mathcal A}}
\newcommand{\cF}{{\mathcal F}}
\newcommand{\cN}{{\mathcal N}}
\newcommand{\cH}{{\mathcal H}}
\newcommand{\cT}{{\mathcal T}}
\newcommand{\cP}{{\mathcal P}}
\newcommand{\cE}{{\mathcal E}}
\newcommand{\cZ}{{\mathcal Z}}
\newcommand{\cM}{{\mathcal{M}}}
\newcommand{\cL}{{\mathcal{L}}}
\newcommand{\on}{\operatorname}
\newcommand{\is}{\simeq}
\newcommand{\Loc}{\on{LocSys}}
\newcommand{\nc}{\newcommand}
\nc{\al}{{\alpha}} \nc{\be}{{\beta}} \nc{\ga}{{\gamma}}
\nc{\ve}{{\varepsilon}} \nc{\Ga}{{\Gamma}} 
\nc{\La}{{\fa}}
\nc{\ad }{{\on{ad }}}
\nc{\aff}{{\on{aff}}} \nc{\Aff}{{\mathbf{Aff}}}
\nc{\der}{{\on{der}}}
\nc{\diag}{{\on{diag}}}
\nc{\Fl}{{\calF\ell}}
\nc{\Hg}{{\on{Higgs}}}
\nc{\Id}{{\on{Id}}}
\nc{\Ind}{{\on{Ind}}}
\newcommand{\Lie}{{\on{Lie}}}
\nc{\Op}{{\on{Op}}}
\nc{\res}{{\on{res}}}
\nc{\tr}{{\on{tr}}}
\nc{\GSp}{{\on{GSp}}} \nc{\GU}{{\on{GU}}} \nc{\SL}{{\on{SL}}}
\nc{\SU}{{\on{SU}}} \nc{\SO}{{\on{SO}}}
\nc{\nh}{{\Loc_{J^p}(\tau')}}
\nc{\bnh}{{\Loc_{\breve J^p}(\tau')}}
\nc{\bU}{{\overline{U}}} 
\nc{\IC}{{\on{IC}}}
\renewcommand{\SS}{{\operatorname{SS}}}
\nc{\op}{{\operatorname{P}}}
\newcommand{\br}{\begin{rouge}}
\newcommand{\er}{\end{rouge}}
\newcommand{\bb}{\begin{bluet}}
\newcommand{\eb}{\end{bluet}}
\newcommand{\p}{\perp}
\nc{\ot}{\otimes}
\nc{\oh}{{\operatorname{H}}}
\nc{\gr}{{\operatorname{gr}}}
\nc{\rk}{{\operatorname{rank}}}
\nc{\codim}{{\operatorname{codim}}}
\nc{\img}{{\operatorname{Im}}}
\nc{\Span}{{\operatorname{Span}}}
\nc{\Img}{\operatorname{Im}}
\nc{\Char}{\operatorname{Char}}
\newcommand{\rs}{\mathrm{s}}
\newcommand{\beqn}{\begin{equation*}}
\newcommand{\eeqn}{\end{equation*}}
\newcommand{\beq}{\begin{equation}}
\newcommand{\eeq}{\end{equation}}
\newcommand{\bern}{\begin{eqnarray*}}
\newcommand{\eern}{\end{eqnarray*}}
\newcommand{\ber}{\begin{eqnarray}}
\newcommand{\eer}{\end{eqnarray}}
\newcommand{\inv}{{\mathbin{/\mkern-4mu/}}}
\newcommand{\pa}{{\mathbf{p}}}
\newcommand{\nmod}{\hspace{-.1in}\mod}
\newcommand{\dsum}{{\mathlarger{\subset}\mkern-12mu\raisebox{2pt}{\mbox{\larger[-5]$\mathsmaller{\bigoplus}$}}}}
\begin{document}
\title[Character sheaves for classical symmetric pairs]{Character sheaves for classical symmetric pairs}

        \author{Kari Vilonen}
        \address{School of Mathematics and Statistics, University of Melbourne, VIC 3010, Australia, and Department of Mathematics and Statistics, University of Helsinki, Helsinki, 00014, Finland}
         \email{kari.vilonen@unimelb.edu.au}
         \thanks{Kari Vilonen was supported in part by  the ARC grants DP150103525 and DP180101445 and the Academy of Finland.}
         \author{Ting Xue}
         \address{ School of Mathematics and Statistics, University of Melbourne, VIC 3010, Australia, and Department of Mathematics and Statistics, University of Helsinki, Helsinki, 00014, Finland}
         \email{ting.xue@unimelb.edu.au}
\thanks{Ting Xue was supported in part by the ARC grants DP150103525 and DE160100975.}

\makeatletter
\let\@wraptoccontribs\wraptoccontribs
\makeatother

\contrib[with an appendix by]{Dennis Stanton}
\begin{abstract}
We establish a Springer theory for classical symmetric pairs. We give an explicit description of character sheaves in this setting. In particular we determine the cuspidal character sheaves. 
\end{abstract}

\maketitle

\setcounter{tocdepth}{2} \tableofcontents

\section{Introduction}

In this paper we work out a theory analogous to the generalized Springer correspondence of~\cite{Lu} in the context of symmetric pairs. We concentrate on classical symmetric pairs, but our methods are general and do extend to the other cases with some minor modifications. We have chosen to  treat the case of $SL_n$ in a companion paper~\cite{VX1} as it requires an expansion of the techniques used here.

Let $G$ be a connected complex reductive algebraic group and $\theta:G\to G$ an involution. Let $K=G^\theta$ be the subgroup of fixed points of $\theta$. The pair $(G,K)$ is called a symmetric pair. It is called a {\it split} symmetric pair if there exists a maximal torus $T$ of $G$ that is $\theta$-split, i.e., $\theta(t)=t^{-1}$ for all $t\in T$. Let $\Lg=\on{Lie}G$ and let $\Lg=\Lg_0\oplus\Lg_1$ be the decomposition into $\theta$-eigenspaces so that $d\theta|_{\Lg_i}=(-1)^i$. Let $\cN$ denote the nilpotent cone of $\Lg$ and let $\cN_1=\cN\cap\Lg_1$ be the nilpotent cone in $\Lg_1$.  We write $\Char_K(\Lg_1)$ for the set of irreducible $\bC^*$-conic $K$-equivariant perverse sheaves $\cF$ on $\Lg_1$ whose singular support is nilpotent, i.e., for $\cF$ such that $\SS(\cF)\subset \Lg_1\times \cN_1$ where   $\Lg_1$ and $\Lg_1^*$ are identified via a $K$-invariant non-degenerate bilinear form on $\Lg_1$. We call the sheaves in $\Char_K(\Lg_1)$ character sheaves for the symmetric pair $(G,K)$. 

The classical character sheaves of~\cite{Lu} on a Lie algebra $\Lg$ can be viewed as a special case of character sheaves for symmetric pairs, where one considers the symmetric pair $(G\times G,G)$ with $\theta$ switching the factors in $G\times G$. In this paper we concentrate on the classical symmetric pairs, that is, when $G$ is $GL_n,Sp_{2n}$ or $SO_{n}$, and give a complete description of the set $\Char_K(\Lg_1)$ for all pairs $(G,K)$.   Special cases have been considered before. The case $(GL_{2n},Sp_{2n})$ was considered by Grinberg in~\cite{G3}, Henderson in~\cite{H}, and Lusztig in \cite{L}.  The case $(GL_n,GL_p\times GL_q)$  was considered by Lusztig in~\cite{L} where he treats $GL_n$ in the case of arbitrary finite order semi-simple inner automorphisms. In these instances the Springer theory closely resembles the classical situation. In~\cite{CVX} we consider the case $(SL_n,SO_n)$ where phenomena quite different from the classical case already occur. 

We write  $\Char_K^\rf(\Lg_1)$ for character sheaves whose support is all of $\Lg_1$; we call these full support character sheaves. We show that all full support character sheaves arise from the nearby cycle construction in~\cite{GVX}, which, in turn, is based on ideas in~\cite{G1,G2}. Sheaves in $\Char_K^\rf(\Lg_1)$ are $\on{IC}$-sheaves of certain $K$-equivariant local systems on $\Lg_1^{rs}$, the regular semisimple locus of $\Lg_1$.  The equivariant fundamental group $\pi_1^K(\Lg_1^{rs})$ is an extended braid group, see~\eqref{diagram-fundamental group}. In~\S\ref{csfs} we construct certain Hecke algebras, following~\cite{GVX}, and show how their simple modules give us full support character sheaves, see Proposition~\ref{thm-nearby cycles} and Corollary~\ref{coro-full}.

At the other extreme from full support character sheaves there are  nilpotent support character sheaves, i.e., those supported on the closure of a nilpotent $K$-orbit in $\cN_1$. We write $\Char_K^\rn(\Lg_1)$ for these character sheaves. We show such sheaves only occur when the involution $\theta$ is inner and that they all arise via parabolic induction from  $\theta$-stable Borel subgroups. In particular, the support of these character sheaves are exactly the Richardson orbits attached to $\theta$-stable Borel subgroups. From a geometric point of view these are the nilpotent $K$-orbits whose closures are images of conormal bundles of closed $K$-orbits on the flag manifold $G/B$ under the moment map. They can be viewed as singular supports of discrete series representations. In this context they have been studied and classified by Trapa in~\cite{T}. We make use of his classification in our determination of $\Char_K^\rn(\Lg_1)$ in Section~\ref{sec-biorbital}. 

Following standard terminology,  we call a character sheaf {\it cuspidal} if it does not appear as a direct summand (up to shift) in parabolic induction of character sheaves in $\on{Char}_{L^\theta}(\Ll_1)$ from $\theta$-stable Levi subgroups $L$ contained in proper $\theta$-stable parabolic subgroups of $G$. In~\cite{Lu}  Lusztig has worked out the cuspidal character sheaves in the classical case, which have nilpotent supports. One of our main results classifies cuspidal character sheaves for classical symmetric pairs as follows.

\begin{thm} (Corollary~\ref{cuspidal corollary})
\label{def of cuspidal}
 The cuspidal character sheaves consist of 
\begin{enumerate}
\item  The constant sheaf $\bC_{\Lg_1}$ for the pair $(GL_2,GL_1\times GL_1)$,
\item Full support character sheaves for split symmetric pairs.
\end{enumerate}
\end{thm}
We note that the skyscraper sheaf at the origin for the pair $(GL_1,GL_1)$ is also cuspidal.  The full support cuspidal sheaves in (2) are obtained via the nearby cycle construction from simple modules of Hecke algebras of~\cite{GVX} as explained in~\S\ref{csfs} and in Proposition~\ref{thm-nearby cycles}. 
These Hecke algebras are not in general associated to Weyl groups of subgroups of $G$. However, as we will explain in~\S\ref{sec-endoscopy}, they are associated to Weyl groups of subgroups of the dual group $\check G$ and so one can think of them as arising from endoscopic groups. 

\begin{rmk}
For other isogeny classes such as special linear groups and spin groups, there are further cuspidal character sheaves of full support, as well as cuspidal character sheaves  which are not of full support at various central characters. In particular, we will have cuspidal character sheaves for special linear groups on quasi-split symmetric pairs. We treat this case in~\cite{VX1}. For spin groups there are cuspidals for symmetric pairs which are not quasi-split, see~\cite{X}. 
\end{rmk}

We determine the character sheaves  in Theorems~\ref{thm-type A}, \ref{thm-type C}, \ref{thm-type BD}. For each nilpotent $K$-orbit $\cO$ on $\cN_1$ we construct in~\S\ref{b} the corresponding dual stratum $\widecheck\cO\subset \Lg_1$. The character sheaves are locally constant along the strata $\widecheck\cO$. For the $0$-orbit the dual stratum is $\Lg_1^{rs}$.  In~\S\ref{sec-support} we determine explicitly the nilpotent orbits $\cO$ such that $\widecheck\cO$ supports a character sheaf and we explicitly describe, analogously to the $\Lg_1^{rs}$ case, the equivariant fundamental groups $\pi_1^K(\widecheck\cO)$ of these $\widecheck\cO$ in terms of extended braid groups, see~\eqref{identification}. We  write down explicitly the local systems on the $\widecheck\cO$ in terms of  certain Hecke algebras whose $\on{IC}$-sheaves are character sheaves. To prove the theorems we make a detailed study of parabolic induction of character sheaves from $\theta$-stable Levi subgroups contained in proper $\theta$-stable parabolic subgroups. To show that the nearby cycle construction and parabolic induction give a complete set of the character sheaves we rely on a counting argument, as in the classical case treated in~\cite{Lu}.

We work with varieties over complex numbers and  with sheaves with complex coefficients. However, we can use any field of characteristic zero as coefficients, although sometimes we require the field to contain roots of unity. In~\cite{GVX} we work over the integers. Thus, it seems reasonable that one can develop a modular theory along the lines of this paper. Furthermore, our results should hold for $\ell$-adic sheaves in the finite characteristic setting, but at the moment~\cite{GVX} is written in the classical topology only. 

 In \cite{LY}, the authors have studied the decomposition of the derived category $D_K(\cN_1)$ into blocks using spiral induction  where they treat the general case of finite order semi-simple automorphisms. Their work can be viewed as another generalization of the Springer theory. We expect that the theory presented in this paper can be extended to the case of finite order automorphism $\theta$, see partial results in~\cite{GVX2,VX2}, although we do not expect to obtain as detailed results as those presented in this paper. Graded Lie algebras are intimately connected to $p$-adic groups via the Moy-Prasad filtration (see, for example, \cite{RY}). Our theory is also directly related to the affine Springer fibers studied by Oblomkov and Yun ~\cite{OY}. Thus, we expect that our theory presented here, and its generalization to the higher order cases, to have applications in $p$-adic groups as well as geometry of  affine Springer fibers. We will connect our results to the work of Lusztig and Yun, and address these applications in future work.

The paper is organized as follows. In Section~\ref{sec-preliminaries} we recall the preliminaries on symmetric pairs, nilpotent orbits, restricted roots,  little Weyl groups, and set up the notation. In particular, we give an explicit description of the classical symmetric pairs we work with and explicitly describe the associated Lie theoretic data. In Section~\ref{sec-general strat} we describe the general strategy to determine the set $\Char_K(\Lg_1)$. 
In Section~\ref{sec-biorbital} we describe the nilpotent support character sheaves. In Section~\ref{sec-full support} we apply the nearby cycle construction in~\cite{GVX} and describe the full support character sheaves. In Section~\ref{sec-main theorems} we state the main theorems, Theorems~\ref{thm-type A}, \ref{thm-type C}, and~\ref{thm-type BD}, where the character sheaves are determined. In Section~\ref{sec-proof} we prove the main theorems  by combining the results in the previous sections, parabolic induction and counting arguments. In Appendix~\ref{dual strata} we discuss the dual strata in the classical situations. Appendix~\ref{combinatorics} by Dennis Stanton contains proofs of combinatorial formulas which are crucial for the proofs in this paper.

{\bf Acknowledgement.} We thank Jeff Adams, Volker Genz, George Lusztig, Monty McGovern,  Peter Trapa,  David Vogan and Roger Zierau for helpful discussions. We also thank the Research Institute for Mathematical Sciences at Kyoto University for hospitality and a good working environment. Special thanks are due to Misha Grinberg, Dennis Stanton, Cheng-Chiang Tsai, and Zhiwei Yun for invaluable discussions. Furthermore, Dennis Stanton has kindly supplied us with an appendix which is a crucial ingredient in our proofs.

\section{Preliminaries}\label{sec-preliminaries}

Throughout the paper we work with algebraic groups over $\bC$ and with sheaves with complex coefficients.  For perverse sheaves we use the conventions of~\cite{BBD}. If $\cF$ is a perverse sheaf up to a shift we often write $\cF[-]$ for the corresponding perverse sheaf. 

\subsection{Character sheaves}
Let $G$ be a connected reductive algebraic group over $\bC$ and $\theta:G\to G$ an involution. Let $K=G^\theta$. A torus $A\subset G$ is called $\theta$-split, if $\theta(t)=t^{-1}$ for all $t\in A$. The symmetric pair $(G,K)$ is called split if there exists a maximal torus $A$ of $G$ which is $\theta$-split. Note that the involution $\theta$ is inner if and only if $\on{rank}K=\on{rank}G$.

Let $\Lg$ be the Lie algebra of $G$. The involution $\theta$ gives rise to a decomposition $\Lg=\Lg_0\oplus\Lg_1$ such that $d\theta|_{\Lg_i}=(-1)^i$. Let $\cN$ denote the nilpotent cone of $\Lg$ and let $\cN_1=\cN\cap\Lg_1$. The group $K$ acts on $\cN_1$ with finitely many orbits \cite{KR}. We identify $\Lg_1$ and $\Lg_1^*$ via a $K$-invariant non-degenerate bilinear form on $\Lg_1$, which can be obtained from a $G$-invariant and $\theta$-invariant non-degenerate bilinear form on $\Lg$.

Let $\cA_K(\Lg_1)$ denote the set of irreducible $K$-equivariant perverse sheaves on $\cN_1$. 
That is,
\beqn
\begin{gathered}
\cA_K(\Lg_1)=\{\on{IC}(\cO,\cE)\,|\, \cO\subset\cN_1\text{ is an $K$-orbit and $\cE$ is an irreducible $K$-equivariant}\\
\ \ \text{ local system on $\cO$ (up to isomorphism)}\}. 
\end{gathered}
\eeqn
Recall the set $\Char_K(\Lg_1)$ of character sheaves, that is, the set of  irreducible $\bC^*$-conic $K$-equivariant  perverse sheaves on $\Lg_1$ whose singular support is nilpotent. 
Consider the Fourier transform 
$
\fF:P_K(\Lg_1)\to P_K(\Lg_1).
$
 By definition the functor $\fF$ gives us a bijection
\beq\label{twosides}
\fF:\cA_K(\Lg_1) \ \xrightarrow{\sim} \ \Char_K(\Lg_1)\,.
\eeq
This implies, in particular, that the set $\Char_K(\Lg_1)$ is finite. Note that Lusztig calls the sheaves in $\cA_K(\Lg_1)$ orbital complexes and the sheaves in $\Char_K(\Lg_1)$ anti-orbital complexes. 

There are two important extreme cases of character sheaves. The character sheaves supported on all of $\Lg_1$ are called full support character sheaves,  and we write $\Char_K^\rf(\Lg_1)$ for them. The character sheaves supported on nilpotent $K$-orbits, are called nilpotent support character sheaves and we write $\Char_K^\rn(\Lg_1)$ for them.

\subsection{Restricted roots and the little Weyl group}Let $(G,K)$ be a symmetric pair. We fix a Cartan subspace $\fa$ of $\Lg_1$, i.e., a maximal abelian subspace consisting of semisimple elements. We write $A$ for the maximal $\theta$-split torus of $G$ with Lie algebra $\fa$.  Let $T\subset Z_G(\fa)$ be a maximal torus. Then $T\supset A$ and $T$ is $\theta$-stable. Let $C=T^\theta$ and $\fc=\on{\Lie}C\subset \Lg_0$. We have $\on{Lie}T=\Lt = \fa \oplus \fc$. 

Let $\Phi\subset X^*(T)=\on{Hom}(T,\bG_m)$ be the root system of $(\Lg,T)$.  For each $\alpha\in\Phi$, let $\check\alpha\in X_*(T)=\on{Hom}(\bG_m,T)$ denote the corresponding coroot. We recall the following notions
\begin{eqnarray*}
&&\alpha \in \Phi \ \ \text{is real}  \text{ if } \alpha |_{\fc} = 0 \iff \theta \alpha = -\alpha,\  \
\alpha \in \Phi \ \ \text{is imaginary}  \text{ if } \alpha |_{\La} = 0 \iff \theta \alpha = \alpha, \\
&&\alpha \in \Phi \ \ \text{is complex}  \text{ otherwise}.
\end{eqnarray*}
Let $\Phi^{\mathrm{R}}\subset\Phi$ (resp. $\Phi^{\rIm}\subset\Phi$, $\Phi^{\rCx}\subset\Phi$) be the set of real (resp. imaginary,  complex) roots. 
Then $\Phi^{\rR}$ and $\Phi^{\rIm}$ are subroot systems of $\Phi$.
Furthermore, let $\rho^\rR=\frac{1}{2}\sum_{\alpha\in\Phi^{\rR,+}}\alpha$, $\rho^{\rIm}=\frac{1}{2}\sum_{\alpha\in\Phi^{\rIm,+}}\alpha$ (with respect to some positive systems of $\Phi^\rR$ and $\Phi^{\rIm}$ respectively) and let $$\Phi^{\mathrm{C}}=\{\alpha\in\Phi^{\rCx}\,|\,(\alpha,\rho^\rR)=(\alpha,\rho^{\rIm})=0\}.$$
 Then $\Phi^{\rC}$ is also a subroot system of $\Phi$.
 
 Let $W_\fa:=N_K(\fa)/Z_K(\fa)$ be the little Weyl group of the pair $(G,K)$. We have (see~\cite[Proposition 4.16]{V})
 \beqn
 W_\fa\cong W^{\rR}\rtimes (W^\rC)^\theta
 \eeqn
 where $W^{\rC}$ (resp. $W^{\rR}$) is the Weyl group of the root system $\Phi^{\rC}$ (resp. $\Phi^{\rR}$) and $(W^\rC)^\theta\subset W^\rC$ is the subgroup that consists of the elements in $W^\rC$ commuting with $\theta$. It is well-known that $W_\fa$ is also the Weyl group of the restricted root system, denoted by $\Sigma$, i.e., the (not necessarily reduced) root system formed  by the restrictions of $\alpha\in\Phi$ to $\fa$. We have
 \beqn
 \Lg=Z_\Lg(\fa)\oplus\bigoplus_{\bar\alpha\in\Sigma}\Lg_{\bar\alpha},
 \eeqn
 where  $\Lg_{\bar\alpha}$ denotes the root space corresponding to the restricted root $\bar\alpha$. Note that  if $\bar\alpha$ is the restriction of a complex root $\alpha$, then the root space $\Lg_{\bar{\alpha}}$ is at least two dimensional, since $\alpha$ and $-\theta\alpha$ restrict to the same restricted root $\bar\alpha=(\alpha-\theta\alpha)/2$.
 
Let $s\in W_\fa$ be a reflection, i.e., $s=s_{\bar\alpha}$ for some $\bar\alpha\in\Sigma$. We define
\beq\label{definition of delta(s)}
\delta(s):=\frac{1}{2}\sum_{\substack{\bar\alpha\in\Sigma\\ s_{\bar\alpha}=s}}\dim\Lg_{\bar\alpha}.
\eeq
If $\delta(s)=1$, we write $\alpha_s$ for the unique positive real root in $\Phi^{\rR}$ such that $s_{\bar{\alpha}_s}=s$.

\subsection{The equivariant fundamental group}\label{ssec-equiv fdgp} We say that an element $x\in \Lg_1$ is regular in $\Lg_1$ if $\on{dim}Z_K(x)\leq\on{dim}Z_K(y)$ for all $y\in\Lg_1$. Let $\Lg_1^{rs}$ denote the set of regular semisimple elements of $\Lg_1$ and
set $\fa^{rs}=\fa\cap\Lg_1^{rs}$.  We write $\pi_1^K(\Lg_1^{rs},a)$ for the equivariant fundamental group with base point $a\in \fa^{rs}$.

Consider the adjoint quotient map
\beqn
f:\Lg_1\to\Lg_1\inv K\cong\La\slash W_\La.
\eeqn
As in~\cite{GVX} it gives rise to the following commutative diagram with exact rows
\beq\label{diagram-fundamental group}
\begin{CD}
1 @>>> I@>>>\widetilde B_{W_\La} :=\pi_1^K(\Lg_1^{rs},a)@>{\tilde{q}}>>B_{W_\La}@>>> 1
\\
@. @| @VV{\tilde p}V @VV{p}V @.
\\
1 @>>> I@>>>\widetilde W_\La:=N_K(\La)/Z_K(\La)^0@>{q}>> W_\La@>>> 1\,,
\end{CD}
\eeq
where
\beqn
I=Z_K(\La)/Z_K(\La)^0
\eeqn
is a 2-group and $B_{W_\fa}=\pi_1(\fa^{rs}/W_\fa,\bar a)$ is the braid group associated to $W_\fa$, $\bar a=f(a)$.

\indent In~\cite{GVX}, we have constructed an explicit splitting of the top exact sequence and thus we write
\beq
\label{semidirect}
 \widetilde B_{W_\La} \is I\rtimes B_{W_\La},
\eeq
and we note that the braid group $B_{W_\La}$ acts on $I$ through the quotient $p:B_{W_\La}\to W_\La$. Such a splitting is not unique. The example in~\S\ref{example} illustrates how the splitting affects the labelling of character sheaves.

\subsection{The classical symmetric pairs}\label{ssec-classical pairs} In this subsection we summarize the detailed structure of classical symmetric pairs we will use to classify character sheaves. The classical symmetric pairs are (see for example~\cite{He})
\begin{eqnarray*}
&&\text{(A\rm{I})}\ (GL_n,O_n))\quad
\text{(A\rm{II})}\ \ (GL_{2n},Sp_{2n})\ \ \text{(A\rm{III})}\ (GL_n,GL_p\times GL_q)\quad\,p+q=n
\\
&&\text{(BD\rm{I})}\ (SO_N,S(O_p\times O_q))\quad\,p+q=N\quad \text{(D\rm{III})}\ (SO_N,GL_{N/2})\\
&&\text{(C\rm{I})}\ (Sp_{2n},GL_n)\quad
\text{(C\rm{II})}\ (Sp_{2n},Sp_{2p}\times Sp_{2q})\quad p+q=n.
\end{eqnarray*}
Type (A\rm{I}) has been dealt with in~\cite{CVX} and type (A\rm{II}) has been studied in~\cite{G3},~\cite{H}, and~\cite{L}. Thus we will focus on the remaining cases. We will make use of the following concrete descriptions of $(G,K)$. 

In type A, let  $V=V^+\oplus V^-$ be a $\bC$-vector space of dimension $n$ and let $G=GL_V$. In type BD (resp. C), let $V=V^+\oplus V^-$ be a $\bC$-vector space of dimension $N$ (resp. $2n$) equipped with a non-degenerate symmetric bilinear (resp. symplectic) form $(\, ,\, )$ and let $G=SO_{V,(\, ,\, )}$ (resp. $G=Sp_{V,(\, ,\, )}$).

 {(A\rm{III})}\ \ $\on{dim}V^+=p$, $\on{dim}V^-=q$, $K=GL_{V^+}\times GL_{V^-}$. 

 {(BD\rm{I}) (resp. (C\rm{II}))}\ \  $(V^+,V^-)=0$, $\on{dim}V^+=p$ (resp. $2p$), $\on{dim}V^-=q$ (resp. $2q$), $K=S(O_{V^+}\times O_{V^-})$ (resp.  $K=Sp_{V^+}\times Sp_{V^-}$).
 
 {(D\rm{III}) (C\rm{I})}\ \   $(\ ,\ )|_{V^+}=(\ ,\ )|_{V^-}=0$, $\on{dim}V^+=\on{dim}V^-=n$, $K=G\cap(GL_{V^+}\times GL_{V^-})\cong GL_n$.

We write $r=\on{dim}\fa$. Then $r=\min(p,q)$ in type AIII, BDI, CII, $r=n$ in type CI, and $r=[n/2]$ in type DIII. Note that the split pairs are types A\rm{I}, C\rm{I}, and BD\rm{I} with $r=[N/2]$.

We describe the various root systems and the little Weyl groups, for the convenience of the reader.  As usual, we write $\Phi=\{\pm(\epsilon_i-\epsilon_j)\,|\,1\leq i<j\leq n\}$ in type $A_{n-1}$, $\Phi=\{\pm(\epsilon_i\pm\epsilon_j),\ 1\leq i<j\leq n;\ \pm\epsilon_i\text{ (resp. $\pm2\epsilon_i$)},\,1\leq i\leq n\}$ in type $B_n$ (resp. $C_n$), and $\Phi=\{\pm(\epsilon_i\pm\epsilon_j)\,|\,1\leq i<j\leq n\}$ in type $D_n$.  Let
\beqn
W_n\text{ denote the Weyl group of type $B_n$ (or $C_n$)}  \text{ and $W_n'$ denote the Weyl group of type $D_n$}.
\eeqn
We use the convention that $W_0=W_0'=\{1\}$.  In what follows we indicate the dimension of the restricted root spaces $\Lg_{\bar\alpha}$ by the  number within the parentheses after each restricted root $\bar\alpha$ in $\Sigma$. 
\bern
(\rm{AIII})&&\Phi^{\rR}=\{\pm(\epsilon_{2i-1}-\epsilon_{2i}),\, 1\leq i\leq r\}, \\&& \Phi^{\rC}=\{\pm(\epsilon_{2i-1}-\epsilon_{2j-1}),\, \pm(\epsilon_{2i}-\epsilon_{2j}),\, 1\leq i<j\leq r\},\\&& \Sigma=\{\pm(\epsilon_i\pm\epsilon_j)\,(2),\, 1\leq i<j\leq r,\, \pm2\epsilon_i\,(1), \,\pm\epsilon_i\, (2n-4r),\,1\leq i\leq r\}
\\
(\rm{BI})
&&\Phi^{\rR}=\{\pm(\epsilon_i\pm\epsilon_j),\,1\leq i<j\leq r,\ \pm\epsilon_i,\,1\leq i\leq r\},\ 
\Phi^{\rC}=\emptyset,\\
&& \Sigma=\{\pm(\epsilon_i\pm\epsilon_j)\,(1),\,1\leq i<j\leq r,\ \pm\epsilon_i\,(|p-q|),\,1\leq i\leq r\}
\\
(\rm{DI})&&
\Phi^{\rR}=\{\pm(\epsilon_i\pm\epsilon_j),\,1\leq i<j\leq r\},\ \Phi^{\rC}=\{\pm(\epsilon_r\pm\epsilon_n)\}\ (r<n),\ \Phi^{\rC}=\emptyset\text{ $(r=n)$},\\
&&\Sigma=\{\pm(\epsilon_i\pm\epsilon_j)\,(1),1\leq i<j\leq r,\ \pm\epsilon_i\,(|p-q|),1\leq i\leq r\}
\\
(\rm{CI})&&\Phi^{\rR}=\Sigma=\{\pm(\epsilon_i\pm\epsilon_j),\ 1\leq i<j\leq r;\ \pm2\epsilon_i,\,1\leq i\leq r\},\,
\\
(\rm{CII})
&&\Phi^{\rR}=\{\pm(\epsilon_{2i-1}+\epsilon_{2i}),\,1\leq i\leq r\},\\&& \Phi^{\rC}=\{\pm(\epsilon_{2i-1}-\epsilon_{2j-1}),\ \pm(\epsilon_{2i}-\epsilon_{2j}),\,1\leq i<j\leq r\},\\
&&  \Sigma=\{\text{$\pm(\epsilon_i\pm\epsilon_j)\,(4)$, $1\leq i<j\leq r$, $\pm2\epsilon_i\,(3)$, $\pm\epsilon_i\,(4n-8r)$, $1\leq i\leq r$}\}.
\\
(\rm{DIII})&&
\Phi^{\rR}=\{\pm(\epsilon_{2i-1}-\epsilon_{2i}),\,1\leq i\leq r\},\\&& \Phi^{\rC}=\{\pm(\epsilon_{2i-1}-\epsilon_{2j-1}),\,\pm(\epsilon_{2i}-\epsilon_{2j}),\,1\leq i<j\leq r\},\\
&&\Sigma=\{\text{$\pm(\epsilon_i\pm\epsilon_j)\,(4)$, $1\leq i<j\leq [r]$, $\pm2\epsilon_i\,(1)$,  $\pm\epsilon_i\,(4)$ ($n$ odd), $1\leq i\leq r$}\}.
\eern
We have $W_\fa\cong W_r$ except in type DI when $r=n$. In the latter case $W_\fa\cong W_r'$.

\subsection{Nilpotent orbits and component groups of the centralizers}
\label{nilp comp} In this subsection we recall the classification of nilpotent $K$-orbits on $\cN_1$ and the description of the components groups $A_K(x)=Z_K(x)/Z_K(x)^0$, $x\in\cN_1$. Using the Sekiguchi-Kostant correspondence, this is reduced to the same question for real nilpotent orbits, see, for example, \cite{CM} and \cite{SS}. For completeness and to fix notations, we recall the results here.

 A signed Young diagram is a Young diagram with each box labeled $+$ or $-$ so that signs alternate across rows. Two signed Young diagrams  are regarded equivalent if and only if one can be obtained from the other by interchanging rows of equal length. A signed Young diagram is said to have signature $(p,q)$ if there are $p$ boxes labeled $+$ and $q$ boxes labeled $-$.

From now on, we write a signed Young diagram as
\begin{subequations} 
\beq\label{signed Young diagram}
\lambda=(\lambda_1)^{p_1}_+(\lambda_1)^{q_1}_-(\lambda_2)^{p_2}_+(\lambda_2)^{q_2}_-\cdots(\lambda_s)^{p_s}_+(\lambda_s)^{q_s}_-,
\eeq
where $\lambda=(\lambda_1)^{p_1+q_1}(\lambda_2)^{p_2+q_2}\cdots(\lambda_s)^{p_s+q_s}$ is the corresponding partition, $\lambda_1>\lambda_2>\cdots>\lambda_s>0$, for $i=1,\ldots, s$, $p_i+q_i>0$ is the multiplicity of $\lambda_i$ in $\lambda$, and  $p_i\geq 0$ (resp. $q_i\geq 0$) is the number of rows of length $\lambda_i$ that begins with sign $+$ (resp. $-$). For later use, it is more convenient to use numbers rather than signs. Thus   we will  sometimes replace the subscript $+$ by $0$ and $-$ by $1$ and write the signed Young diagram in~\eqref{signed Young diagram} as
\beq\label{signed Young diagram-2}
\lambda=(\lambda_1)^{p_1}_0(\lambda_1)^{q_1}_1(\lambda_2)^{p_2}_0(\lambda_2)^{q_2}_1\cdots(\lambda_s)^{p_s}_0(\lambda_s)^{q_s}_1.
\eeq 
\end{subequations}
 Given a signed Young diagram $\lambda$, we write $\cO_\lambda$ for the corresponding nilpotent orbit in $\cN_1$.

{(\rm{AIII})}\ \ The orbits are parametrized by signed Young diagrams with signature $(p,q)$. We have $A_K(x)=1$ for all $x\in\cN_1$.

 {(\rm{BDI})}\ \ The orbits are parametrized by signed Young diagrams of signature $(p,q)$ such that, in terms of~\eqref{signed Young diagram}, $p_i=q_i$ when $\lambda_i$ is even, except that each signed Young diagram with all $\lambda_i$ even corresponds to two orbits. The latter case can only happen when $(G,K)=(SO_{4m},S(O_{2m}\times O_{2m}))$; we write $\cO_{\lambda}^{\rm{I}}$ and $\cO_{\lambda}^{\rm{II}}$ for those two orbits.  We have 
\beq\label{component group-BD}
\begin{gathered}
A_K(x)=(\bZ/2\bZ)^{r_\lambda},\\
r_\lambda=|\{i\in[1,s]\,|\,\lambda_i\text{ odd  and }p_i>0\}|+|\{i\in[1,s]\,|\,\lambda_i\text{ odd  and }q_i>0\}|-1,\\
\text{ if at least one part of $\lambda$ is odd};\ \ r_\lambda=0\text{ otherwise}.
\end{gathered}
\eeq
\indent{(\rm{DIII})}\ \ The orbits are parametrized by signed Young diagrams of signature $(n,n)$ such that, 
in terms of~\eqref{signed Young diagram},  $p_i=q_i$ if $\lambda_i$ is odd, and both $p_i$ and $q_i$ are even if $\lambda_i$ is even. We have that $A_K(x)=1,\text{ for all }x\in\cN_1.$

 {(\rm{CI})}\ \ The orbits are parametrized by signed Young diagrams of  signature $(n,n)$ such that, in terms of~\eqref{signed Young diagram}, $p_i=q_i$ if $\lambda_i$ is odd. We have 
\beq\label{component group-C}
\begin{gathered}
A_K(x)=(\bZ/2\bZ)^{r_\lambda},
\\r_\lambda=|\{i\in[1,s]\,|\,\lambda_i\text{ even  and }p_i>0\}|+|\{i\in[1,s]\,|\,\lambda_i\text{ even  and }q_i>0\}|.
\end{gathered}
\eeq
\indent{(\rm{CII})}\ \ The orbits are parametrized by signed Young diagrams of signature $(2p,2q)$ such that, in terms of~\eqref{signed Young diagram}, $p_i=q_i$ if $\lambda_i$ is even, and  both $p_i$ and $q_i$ are even if $\lambda_i$ is odd. We have that $A_K(x)=1,\text{ for all }x\in\cN_1.$

\subsection{Weyl groups and Hecke algebras}\label{sec-Hecke}
In our setting Hecke algebras with (unequal) parameters $\pm 1$ arise. In this subsection we recall some results about their simple modules and the generating functions of the numbers of  simple modules. 

 Let $\calH_{W_n,c_0,c_1}$ denote the Hecke algebra generated by $T_{s_i}$, $i\in[1,n]$, with relations
\beqn
\begin{gathered}
 T_{s_i}T_{s_{i+1}}T_{s_i}=T_{s_{i+1}}T_{s_i}T_{s_{i+1}},\ i\in[1,n-2],\ 
T_{s_{n-1}}T_{s_n}T_{s_{n-1}}T_{s_n}= T_{s_{n}}T_{s_{n-1}}T_{s_{n}}T_{s_{n-1}},\\
T_{s_i}T_{s_j}=T_{s_j}T_{s_i},\ |i-j|>1;\ (T_{s_i}-c_0)(T_{s_i}+1)=0,\ i\in[1,n-1],\ (T_{s_n}-c_1)(T_{s_n}+1)=0.
\end{gathered}
\eeqn
When $c_0=1$ and $c_1=-1$, the group algebra  $\bC[S_n]$ is naturally a subalgebra of $\calH_{W_{n},1,-1}$. By~\cite[\S5.4]{DJ}, there is a natural bijection between the set of simple modules of $\bC[S_n]$ and the set of simple modules of $\calH_{W_{n},1,-1}$ as follows: each simple module of $\bC[S_n]$ naturally extends to a simple module of $\calH_{W_{n},1,-1}$ by letting $T_{s_n}$ act by $-1$.  Thus, the simple  modules of $\calH_{W_{n},1,-1}$ are parametrized by $\cP(n)$, the set of partitions of $n$. We write
\beq\label{heck rep-sn like}
\on{Irr}\cH_{W_{n},1,-1}=\{L_\beta\,|\,\beta\in\cP(n)\}.
\eeq
When $c_0=c_1=1$, the set of simple $\calH_{W_{n},1,1}=\bC[W_n]$-modules is parametrized by $\cP_2(n)$, the set of bi-partitions of $n$, i.e., pairs of partitions $(\mu,\nu)$ with $|\mu|+|\nu|=n$.
We write
\beqn
\on{Irr}\cH_{W_{n},1,1}=\{L_\rho\,|\,\rho\in\cP_2(n)\}.
\eeqn
Thus, the numbers of simple modules of $\cH_{W_{n},1,-1}$ and $\cH_{W_{n},1,1}$ are given by the following generating functions
\beqn
\sum_{n\geq 0}|\on{Irr}\cH_{W_{n},1,-1}|x^n=\prod_{s\geq 1} \frac 1 {(1-x^s)},\ 
\ 
\sum_{n\geq 0}|\on{Irr}\cH_{W_{n},1,1}|x^n=\prod_{s\geq 1} \frac 1 {(1-x^s)^2}.
\eeqn
For $\cH_{W_k,-1,-1}$ and  $\cH_{W_k,-1,1}$ the combinatorial description of the simple modules is more involved. However, by~\cite{AM}, we have the following generating functions
\beq\label{generating for Hecke}
\begin{gathered}
\sum_k|\on{Irr}\cH_{W_k,-1,-1}|x^k=\prod_{s\geq 1}(1+x^{2s})(1+x^s),\\ \sum_k|\on{Irr}\cH_{W_k,-1,1}|x^k=\prod_{s\geq 1}(1+x^{2s-1})(1+x^s).
\end{gathered}
\eeq
Let $\calH_{W_n',-1}$ denote the Hecke algebra of $W_n'$ with parameter $-1$.
  According to \cite{G}, 
  \beq\label{Hecke-D}
  |\on{Irr}\calH_{W_n',-1}|=\frac{1}{2}|\on{Irr}\cH_{W_n,-1,1}|,\, n\geq1.
  \eeq

\section{General strategy}\label{sec-general strat}
In this section we describe our general strategy to determine character sheaves for symmetric pairs. We will carry out this strategy for the classical symmetric pairs in the subsequent sections. 
We refer the readers to this section for notational conventions.

\subsection{Dual strata for symmetric spaces and supports of character sheaves}
\label{b}

In this subsection we extend the discussion in Appendix~\ref{dual strata} to symmetric pairs. 

For each nilpotent $K$-orbit $\cO$ in $\cN_1$ we consider its conormal bundle
\beqn
\Lambda_\cO = T^*_\cO\Lg_1 =\{(x,y)\in \Lg_1\times\Lg_1\mid x\in\cO \ \ [x,y] = 0\}\,.
\eeqn
Consider the projection $\widetilde\cO $ of $\Lambda_\cO$ to the second coordinate:
\beqn
\widetilde\cO = \{y\in \Lg_1\mid \text{there exist an}\ \  x\in\cO\ \  \text{with} \ \ [x,y] = 0\}\,.
\eeqn
We construct an open (dense) subset $\widecheck \cO$ of $\widetilde\cO$ such that the projection $\Lambda_\cO \to \widetilde\cO$ has constant maximum rank above $\widecheck \cO$.  Since the Fourier transform preserves the singular support, the $\widecheck \cO$ are subvarieties of $\Lg_1$ with the following property:
\begin{equation*}
\text{For any $\cF\in \op_K(\cN_1)$ the Fourier transform
$\fF(\cF)$ is smooth along all the  $\widecheck \cO$}\,.
\end{equation*}
 Moreover, for each $\on{IC}(\cO,\cE)\in\op_K(\cN_1)$,
\beq
\label{nilpsupport}
\on{Supp}\fF(\on{IC}(\cO,\cE))=\overline{\widecheck{\cO'}},\text{ for some }\cO'\subset\bar\cO.
\eeq

As in Appendix~\ref{dual strata} we consider the adjoint quotient map $\Lg_1 \xrightarrow{f} \Lg_1\inv K\cong\La\slash W_\La$. We have:
\beqn
\begin{CD}
\widetilde\cO @>{\tilde f}>> f(\widetilde\cO) @>{\sim}>>\La^\phi\slash W_{\La^\phi}
\\
@VVV @VVV  @VVV
\\
\Lg_1 @>{f}>> \Lg_1\inv K @>{\sim}>>\La\slash W_\La
\end{CD}
\eeqn
where the vertical arrows are inclusions and the upper righthand corner is constructed as follows. Consider an element $e\in\cO$ and a normal $\fs\fl_2$-triple $\phi= (e,f,h)$ such that $f\in\Lg_1$ and $h\in\Lg_0$. Recall that we have
\beqn
\Lg^e=\Lg^\phi\oplus\fu^e,
\eeqn
where $\Lg^\phi=\Lg^e\cap\Lg^h, \ \ \fu=\oplus_{i\geq 1}\Lg(i)$, see~\eqref{grading}.
Thus
\beqn
\Lg_1^e=\Lg_1^\phi\oplus(\fu^e\cap\Lg_1).
\eeqn

 Let $\La^\phi\subset\Lg_1^\phi$ be a Cartan subspace such that every semisimple element in $\Lg_1^\phi$ is $K^\phi$-conjugate to some element in $\La^\phi$,  where $K^\phi=G^\phi\cap K=Z_K(e)\cap Z_K(f)\cap Z_K(h)$. We choose $\La^\phi$ such that it lies in $\La$. 
Let $W_{\La^\phi}$ be the little Weyl group $N_{K^\phi}(\La^\phi)/Z_{K^\phi}(\La^\phi)$. The same argument  as in Appendix~\ref{dual strata} shows that $f(\widetilde \cO)\cong \La^\phi/W_{\La^\phi}$ and we write 
\beqn
\tilde{f}:\tilde{\cO}\to\La^\phi/W_{\La^\phi}.
\eeqn Note also that, analogously to~\eqref{weyl equality} we have
\beq
\label{symmetric weyl equality}
N_{K}(\La^\phi)/Z_{K}(\La^\phi) = N_{K^\phi}(\La^\phi)/Z_{K^\phi}(\La^\phi)=W_{\La^\phi}\,.
\eeq

For an element $x\in\Lg_1$, we write $x=x_s+x_n$ for the Jordan decomposition of $x$ into semisimple part $x_s$ and nilpotent part $x_n$. Then $x_s,\,x_n\in\Lg_1$ and $[x_s,x_n]=0$. Let  $(\La^\phi)^{rs}$ denote the regular semisimple locus of $\La^\phi$ defined with respect to the symmetric pair $(G^\phi,K^\phi)$. Proceeding again as in  Appendix~\ref{dual strata} we have the following:
\begin{lem} 
 If $x=x_s+x_n\in\Lg_1^e$ and $x_s\in(\La^\phi)^{rs}$, then $x_n\in\bar\cO$.
\end{lem}

We now define 
\beq\label{def of check o}
\widecheck\cO = \{y\in \widetilde\cO \mid y=y_s+y_n, \ \  \tilde{f}(y)\in (\La^\phi)^{rs}/W_{\La^\phi}, \ \ y_n\in\cO\}\,.
\eeq
 This establishes a correspondence:
\beqn
\cO \leftrightarrow \widecheck\cO.
\eeqn
\indent Repeating the arguments in Appendix~\ref{dual strata} we obtain the following description of the equivariant fundamental groups of $\widecheck \cO$:
\beq
\label{identification}
\begin{CD}
1 @>>>  Z_{K^\phi}(\fa^\phi)/Z_{K^\phi}(\fa^\phi)^0@>>>\pi_1^{K^\phi}((\Lg_1^\phi)^{rs})@>{\tilde{q}}>> B_{W_{\La^\phi}}@>>> 1
\\
@. @| @| @| @.
\\
1 @>>> Z_K(\fa^\phi+e)/Z_K(\fa^\phi+e)^0@>>>\pi_1^K( \widecheck\cO)@>{\tilde{q}}>> B_{W_{\La^\phi}}@>>> 1\,,
\end{CD}
\eeq
where $B_{W_{\La^\phi}}=\pi_1((\La^\phi)^{rs}/W_{\La^\phi})$ is the braid group associated to ${W_{\La^\phi}}$. Recall that, as in Appendix~\ref{dual strata}, we use the terminology ``braid group" even when $W_{\La^\phi}$ is not a Coxeter group. 

In view of~\eqref{nilpsupport}, each character sheaf is supported on some $\overline{\widecheck\cO}$. For each symmetric pair  considered here, we will describe explicitly the set of nilpotent orbits $\cO$ for which the corresponding $\overline{\widecheck\cO}$  supports a character sheaf. We also write down the representations of $\pi_1^K(\widecheck\cO)=\pi_1^{K^\phi}((\Lg_1^\phi)^{rs})$ whose $\on{IC}$-sheaves are character sheaves. When $\widecheck\cO$ supports a character sheaf, ${W_{\La^\phi}}$ turns out to be a Coxeter group and the rows in~\eqref{identification} can be split as in~\eqref{diagram-fundamental group} following~\cite[Section 4.4]{GVX}. We will later construct such explicit splittings. 

\subsection{Full support character sheaves}
\label{csfs}

In this subsection we recall the main construction from~\cite{GVX} and explain how it will be used to construct full support character sheaves. All such character sheaves are of the form $\on{IC}(\Lg_1^{rs},\cL)$, where $\cL$ is an irreducible $K$-equivariant local system on $\Lg_1^{rs}$.

Recall the notation from \S\ref{ssec-equiv fdgp}. Let $X_{\bar a}=f^{-1}(\bar a)$, the fiber of the adjoint quotient map $f:\Lg_1\to \La/W_{\La}$ at $\bar a\in  \La^{rs}/W_{\La}$.
Let $\hat I$ denote the set of irreducible characters of the 2-group $I$. Consider a character $\chi\in\hat I$ and note that the equivariant fundamental group of $X_{\bar a}$ is given by\beqn
\pi_1^K (X_{\bar a}, a) \cong I=Z_K(\fa)/Z_K(\fa)^0\,,
\eeqn
where $f(a)=\bar a$.
Therefore, the character $\chi$ gives rise to a rank one $K$-equivariant local system $\cL_\chi$ on $X_{\bar a}$. We base change $f$ to the family
\beqn
f_{\bar a}:\cZ_{\bar a}= \{(x,c)\in\Lg_1\times\bC\,|\,f(x)=c\,\bar a\} \to \bC
\eeqn
where the $\bC$-action on $\La/W_{\La}$ is induced by the action on $\fa$ so that $f(ca)=cf(a)$. By construction this family is $K$-equivariant. We define the nearby cycle sheaf associated to $\chi\in\hat I$ as
\beq\label{nearby cycle sheaves}
P_\chi=\psi_{f_{\bar a}} \cL_{\chi}[-]\in\on{Perv}_K(\cN_1).
\eeq
As the group $K$ is not necessarily connected, a character of the component group $K/K^0=I/I^0$ enters the description of the Fourier transform $\fF P_\chi$, where we have written
 $$
 I^0=Z_{K^0}(\fa)/Z_{K^0}(\fa)^0,
 $$
 see~\cite[\S 3.4]{GVX}. It was denoted by $\tau$ there but we denote it by $\iota$ in this paper, i.e.,
\beqn
\iota:I\to I/I^0=K/K^0\to\{\pm 1\}
\eeqn
is the character determined by the action of $M_\bbR/(M_\bbR\cap K_\bbR^0)\cong K/K^0$ on $\wedge^{\on{top}}(\mathfrak k_\bbR/\fm_\bbR)$, where $K_\bbR$ is the compact form of $K$, $M_\bbR=Z_{K_\bbR}(\fa)$, $\fk_\bbR=\on{Lie}K_\bbR$ and $\fm_\bbR=\on{Lie}M_\bbR$. 
In particular, if $K=K^0$, then $\iota$ is the trivial character. Recall that the $W_\fa$ action on $\hat I$ leaves $\iota$ fixed. Let $\bC_\iota$ denote the rank one $K$-equivariant local system on $\Lg_1^{rs}$ given by the representation of $\pi_1^{K}(\Lg_1^{rs})=I\rtimes B_{W_\fa}$ where $I$ acts via the character $\iota$ and $B_{W_\fa}$ acts trivially. 
We have (\cite[Theorem 3.6]{GVX})
\beq 
\label{big strata}
\fF P_\chi = \operatorname{IC}(\Lg_1^{rs}, \cM_\chi\otimes \bC_\iota)
\eeq
where  $\cM_\chi$ is the $K$-equivariant local system on $\Lg_1^{rs}$ given by the following $\widetilde B_{W_\La}=\pi_1^K(\Lg_1^{rs})$ representation
\beq\label{the representation M_chi}
M_\chi \ = \  \bC[\widetilde B_{W_\La}]\otimes_{\bC[\widetilde B_{W_\La}^{\chi,0}]} (\bC_\chi\otimes\cH_{W_{\La,\chi}^0})\,.
\eeq
To explain the notations in the above formula, recall the map $p:B_{W_\fa}\to W_\fa$ in~\eqref{diagram-fundamental group}. Let 
\beqn
W_{\La,\chi}=\on{Stab}_{W_\fa}(\chi)\subset W_\fa,\ \ B_{W_{\fa}}^\chi=p^{-1}(W_{\fa,\chi})\subset B_{W_\fa}.
\eeqn
Recall the Coxeter group $W_{\La,\chi}^0\subset W_{\La,\chi}$ defined by
\beqn
\begin{gathered}
W_{\La,\chi}^0\ =\text{ the subgroup of $W_\fa$ generated by reflections $s\in W_\fa$ such that, } \\ \text{either }\delta(s)>1,\text{ or }\delta(s)=1\text{ and }\chi(\check{\alpha}_s(-1))=1,
\end{gathered}
\eeqn
where $\delta(s)$ is defined in~\eqref{definition of delta(s)} and $\check\alpha_s(-1)$ is viewed as an element in $I$ via the natural projection $Z_K(\fa)\to I$. 
We show in \S\ref{ssec-dual side} that the quotient $W_{\La,\chi}/W_{\La,\chi}^0$ is a 2-group, see~\eqref{the 2 group}.
We set
\beqn
 B^{\chi,0}_{W_\La} \ = \  p^{-1}(W^0_{\La,\chi})\subset B_{W_\fa}\ \,\text{ and }\,\ \widetilde B^{\chi,0}_{W_\La}= I\rtimes B^{\chi,0}_{W_\La}\subset\widetilde{B}_{W_\fa}\text{ (see~\eqref{semidirect})}.
\eeqn
Let $\cH_{W_{\La,\chi}^0}$ be the Hecke algebra associated to the Coxeter group $W_{\La,\chi}^0$ defined as follows. We choose a set of simple reflections $s_{\bar\alpha_1}, \dots, s_{\bar\alpha_\ell}$ that generate  $W_{\La,\chi}^0$.  
We write $T_i$ for the generators of $\cH_{W_{\La,\chi}^0}$ associated to $s_{\bar\alpha_i}$. Then the Hecke algebra $\cH_{W_{\La,\chi}^0}$ is generated by the $T_i$ subject to the braid relations plus the relations
\begin{equation*}
(T_i-1)(T_i + q_i) \ = \ 0\,,\ \ q_i = (-1)^{\delta(s_{\bar\alpha_i})}\,.
\end{equation*} 
Now $\bC_\chi\otimes\cH_{W_{\La,\chi}^0}$ is the $\bC[\widetilde B_{W_\La}^{\chi,0}]$-module where $I$ acts via the character $\chi$ and $B_{W_{\La}}^{\chi,0}$ acts via the composition of maps $
\bC[B_{W_{\La}}^{\chi,0}]\twoheadrightarrow \bC[B_{W_{\La,\chi}^0}]\twoheadrightarrow \cH_{W_{\fa,\chi}^0}$. Here $B_{W_{\La,\chi}^0}$ is the braid group associated to the Coxeter group $W_{\fa,\chi}^0$. The first map is induced by the map $B_{W_{\La}}^{\chi,0}=\pi_1(\La^{rs}/W_{\La,\chi}^0)\twoheadrightarrow B_{W_{\La,\chi}^0}=\pi_1(\La_\chi^{rs}/W_{\La,\chi}^0)$, which in turn is induced by the inclusion $\La_\chi^{rs}\subset \La^{rs}$, where $\La_\chi^{rs}$ is the root hyperplane arrangement corresponding to $W_{\fa,\chi}^0$. The second map is given by the natural projection map $\bC[ B_{W_{\La,\chi}^0}]\twoheadrightarrow\cH_{W_{\fa,\chi}^0}$.

 Let us write 
\beq\label{loc-full}
\begin{gathered}
\Theta_{(G,K)}=\{\text{irreducible representations of $\pi_1^K(\Lg_1^{rs})$ that appear}\\
\quad\text{ as composition factors of $M_{\chi}\otimes\bC_\iota$, $\chi\in\hat I$}\}.
\end{gathered}
\eeq
For each $\rho\in\Theta_{(G,K)}$, let $\cL_\rho$ denote the corresponding $K$-equivariant local system on $\Lg_1^{rs}$. It follows from~\eqref{nearby cycle sheaves} and~\eqref{big strata} that 
\beq\label{char-fullsupp}
\on{IC}(\Lg_1^{rs},\cL_\rho)\in\on{Char}_K^{\mathrm{f}}(\Lg_1),\ \rho\in\Theta_{(G,K)}.
\eeq
We will show that for the symmetric pairs considered in this paper,
\beq\label{nearby and full support}
\left\{\on{IC}(\Lg_1^{rs},\cL_\rho)\,|\,\rho\in\Theta_{(G,K)}\right\}=\on{Char}_K^{\mathrm{f}}(\Lg_1)\,.
\eeq
\indent To determine the set $\Theta_{(G,K)}$, it suffices to determine the composition factors of $M_{\chi}$.  Let us note that
\beqn
M_\chi \ = \  \bC[\widetilde B_{W_\La}]\otimes_{\bC[\widetilde B_{W_\La}^\chi]}\otimes(\bC_\chi\otimes(\bC[ B_{W_\La}^{\chi}]\otimes_{\bC[B_{W_\La}^{\chi,0}]} \cH_{W_{\La,\chi}^0}))\,.
\eeqn
Making use of the semidirect product decomposition~\eqref{semidirect} we conclude that the irreducible representations of $\widetilde{B}_{W_\fa}$ appearing as composition factors of $M_\chi$ are of the form $\bC[\widetilde B_{W_\La}]\otimes_{\bC[\widetilde B_{W_\La}^\chi]}\otimes(\bC_\chi\otimes\rho)$, where $\rho$ is an irreducible representation of $B_{W_\La}^{\chi}$ which appears as a composition factor in $\bC[ B_{W_\La}^{\chi}]\otimes_{\bC[B_{W_\La}^{\chi,0}]} \cH_{W_{\La,\chi}^0}$. Since $B_{W_\La}^{\chi}/B_{W_\La}^{\chi,0}$ is a 2-group, it suffices to study the decomposition $\bC[ B_{W_\La}^{\chi}]\otimes_{\bC[B_{W_\La}^{\chi,0}]} \tau$ using Clifford theory, where $\tau\in\on{Irr}\cH_{W_{\La,\chi}^0}$. In particular, we see that all irreducible representations of $\widetilde{B}_{W_\fa}$ which appear as composition factors of $M_\chi\otimes\bC_\iota$ can be obtained as quotients of $M_\chi\otimes\bC_\iota$.

For $\tau\in\on{Irr}\cH_{W_{\La,\chi}^0}$, we write $V_{\tau,\chi}= \bC[\widetilde B_{W_\La}]\otimes_{\bC[\widetilde B_{W_\La}^{\chi,0}]}\otimes(\bC_\chi\otimes\tau)$. When $W_{\La,\chi}^0= W_{\La,\chi}$,  $V_{\tau,\chi}$ is an irreducible representation of $\widetilde B_{W_\La}$. When $W_{\La,\chi}^0\neq W_{\La,\chi}$,  
we write $V_{\tau,\chi}^\delta$ for the non-isomorphic irreducible summands   of $V_{\tau,\chi}$ as representations of  $\widetilde B_{W_\fa}$.

\subsection{The nearby cycle construction in the split case and endoscopy}\label{sec-endoscopy}

The groups $W_{\La,\chi}$ and $W_{\La,\chi}^0$ are not in general Weyl groups of subgroups of $G$. We show in this subsection that they are Weyl groups of the dual group $\check G$ in the case of split symmetric pairs. 
We explain at the end of the subsection how the case of non-trivial characters $\chi$ can be viewed as endoscopy. 

In this subsection let $(G,K)$ be a split symmetric pair. Then $A=T$, $W_\La = W=N_G(T)/Z_G(T)$, and $I=Z_K(\La)$. In particular, $\Phi^\rR=\Phi$.

We first discuss the interpretation of  
\beqn
W_{\fa,\chi}^0=\langle s_{\bar\alpha}, \alpha\in\Phi,\,\chi(\check\alpha(-1))=1\rangle\subset W_{\fa,\chi}=\on{Stab}_{W_\fa}(\chi)
\eeqn
as a Weyl group on the dual side.

Recall that we have a surjection $A[2] \to I$ of order two elements of $A$ to $I$. Thus, we can regard the character $\chi\in \hat I$ as a  character of $A[2]$. 

Let $\check{G}$ be the dual group and $\check{A}\subset \check{G}$ the dual torus. It is standard, and not difficult to see, that  we can identify $\on{Hom}(A[2],\bG_m)\cong\check{A}[2]$, the order $2$ elements in $\check A$. Thus, $\chi$ regarded as an element in $\check{A}[2]$, gives rise to an involution on $\check G$ which we denote by $\check\theta_\chi$, i.e.
$
\check\theta_\chi=\on{Int}\chi.
$
Let 
 \beqn
\check K(\chi) = \check G^{\check\theta_\chi},\ \ \check K(\chi)^0=(\check G^{\check\theta_\chi})^0.
\eeqn
 We show that
\begin{subequations}
\beq\label{weyl-1}
W_{\fa,\chi}\ = \ W(\check K(\chi),\check A)
\eeq
\beq\label{weyl-2}
W_{\fa,\chi}^0\ =\ W(\check K(\chi)^0,\check A).
\eeq 
\end{subequations}

 The root system of $(\check G,\check A)$ can be identified with $\check\Phi=\{\check\alpha\,|\,\alpha\in\Phi\}$. With  respect to the $\check\theta_\chi$-stable torus $\check A$, all roots in $\check\Phi$ are imaginary since $\check\theta_\chi|_{\check A}=1$. They split into compact imaginary and non-compact imaginary according to the value of $\chi(\check\alpha(-1))$. So, we set
\beqn
\check \Phi_{\mathrm{ci}}=\{\check \alpha\mid\alpha\in\Phi,\ \chi(\check \alpha(-1))=1\} \qquad \check \Phi_{\mathrm{nci}}=\{\check \alpha\mid\alpha\in\Phi,\ \chi(\check \alpha(-1))=-1\}\,.
\eeqn
By construction we have 
\beqn
    W_{\fa,\chi}\ = \ \{w\in W\mid w:\check \Phi_{\mathrm{ci}} \to \check \Phi_{\mathrm{ci}}, \   w:\check \Phi_{\mathrm{nci}} \to \check \Phi_{\mathrm{nci}}\}\  = \ \{w\in W\mid w:\check \Phi_{\mathrm{ci}} \to \check \Phi_{\mathrm{ci}}\}\,.
\eeqn

Equation~\eqref{weyl-1} can be seen as follows. First, 
$$
W(\check K(\chi),\check A)=N_{\check K(\chi)}(\check A)/Z_{\check K(\chi)}(\check A)=N_{\check K(\chi)}(\check A)/\check A \subset N_{\check G}(\check A)/\check A=W\,.
$$ 
Let $n_w\in N_{\check G}(\check A)$ denote a representative of $w\in W$. Now, $n_w\in \check{K}(\chi)$ if and only if $\chi n_w\chi^{-1}=n_w$ if and only if $n_w^{-1}\chi n_w=n_w$, i.e., if and only if $w\chi=\chi$. Equation~\eqref{weyl-2} follows from the fact that the roots of $( \check K(\chi)^0,\check A)$ are the $\check\alpha$'s such that $\chi(\check\alpha(-1))=1$. 

Note that the Weyl group $W(\check K(\chi),\check A)$ is not necessarily a Coxeter group as $\check K(\chi)$ might not be connected. It is connected if $\check G$ is simply connected and then $G$ is adjoint. The quotient $W(\check K(\chi),\check A)/W(\check K(\chi)^0,\check A)= \check K(\chi)/\check K(\chi)^0$ is a 2-group,  called the $R$-group. 

 The above results  imply that 
\beq\label{split 2-group}
\text{$W_{\fa,\chi}/W_{\fa,\chi}^0=\check K(\chi)/\check K(\chi)^0$ is a 2-group.}
\eeq 
Moreover,
\beq\label{adjoint case-1}
\text{if $G$ is adjoint, then $W_{\fa,\chi}=W_{\fa,\chi}^0$}.
\eeq

Cheng-Chiang Tsai pointed out to us that the previous discussion can be viewed from the endoscopic point of view as follows. 
Note that in the split case for the trivial character $\chi=1$, in~\eqref{the representation M_chi} the Hecke algebra $\cH_{W^0_{\fa,\chi}} = \cH_{W,-1}$, i.e., the Hecke algebra of $W$ with parameter $-1$.

For any $\chi\in \hat I$ we obtained the symmetric pair $(\check G,\check K(\chi))$. By construction, the maximal torus $\check A$ of $\check G$ is also a maximal torus of  $\check K(\chi)$. Let  $G(\chi)$ be the dual group of $\check K(\chi)^0$ and consider the split symmetric pair of $G(\chi)$. By construction the split maximal torus $A(\chi)$ of $G(\chi)$ is canonically isomorphic to $A$ and $W(G(\chi), A)=W(\check K(\chi), \check A) =W_{\fa,\chi}^0$. We have
\begin{equation}
\cH_{W^0_{\fa,\chi}}\\  =\\  \cH_{W_{\fa,\chi}^0,-1}\,.
\end{equation}
The Hecke algebra on the right hand side is the one corresponding the trivial character for the split pair of $G(\chi)$. We can thus interpret the the character sheaves associated to non-trivial characters $\chi$ as arising from the endoscopic groups $G(\chi)$.

\subsection{The groups $W_{\fa,\chi}$ and $W_{\fa,\chi}^0$ for a general symmetric pair}\label{ssec-dual side}

 In this subsection we discuss the groups $W_{\fa,\chi}$ and $W_{\fa,\chi}^0$ for a general symmetric pair. Recall from~\cite[Corollary 5.2]{GVX} that
 \beqn
 W_{\fa,\chi}=W^{\rR}_\chi\rtimes (W^\rC)^\theta,\ \ W^{\rR}_\chi=W^{\rR}\cap W_{\fa,\chi}.
 \eeqn
 To determine $W^{\rR}_\chi$,  let us consider
\beqn
G_\rs= Z_G(\fc)/C^0.
\eeqn
Here $C=T^\theta$ and $\fa=\on{Lie}C$, where $T\subset Z_G(\fa)$ is a maximal torus.
The involution $\theta$ induces an involution $\theta_\rs$ on $G_\rs$ and we write $K_\rs= G_\rs^{\theta_\rs}$. Clearly $A=T/C^0$ is a maximal torus of $G_\rs$ and it is $\theta_\rs$-split. Thus the symmetric pair $(G_\rs,K_\rs)$ is a split pair with $W(G_\rs,A)=W^\rR$ and the root system of $(G_\rs,A)$ can be identified with $\Phi^\rR$. Furthermore, we have
\beqn
I \ = \ Z_K(\La)/Z_K(\La)^0\ =\ C/C^0 \ \ = \ Z_{K_\rs}(\La)/Z_{K_\rs}(\La)^0\,.
\eeqn
Let $W_{\fa_{(G_\rs,K_\rs)}}=W^\rR$ be the little Weyl group of the split symmetric pair $(G_\rs,K_\rs)$. We write $W_{\fa_{(G_\rs,K_\rs)},\chi}$ for $\on{Stab}_{W_{\fa_{(G_\rs,K_\rs)}}}(\chi)$ and $W_{\fa_{(G_\rs,K_\rs)},\chi}^0$ for the corresponding subgroup of $W_{\fa_{(G_\rs,K_\rs)},\chi}$.
Define $W^{\rR,0}_\chi$ to be the subgroup of $W^\rR$ generated by those reflecitions $s_\alpha$, $\alpha\in\Phi^\rR$, such that $\chi(\check\alpha(-1))=1$. Then we have
\beq\label{reduction to Gs}
W^{\rR}_\chi=W_{\fa_{(G_\rs,K_\rs)},\chi},\ \quad W^{\rR,0}_\chi=W_{\fa_{(G_\rs,K_\rs)},\chi}^0.
\eeq

Note that $(W^\rC)^\theta\ltimes (W_{\chi}^R)^0\subset W_{\fa,\chi}^0\subset (W^\rC)^\theta\ltimes W_{\chi}^\rR=W_{\fa,\chi}$. Applying~\eqref{split 2-group} and~\eqref{adjoint case-1} to the split pair $(G_\rs,K_\rs)$ we see that 
\beq\label{the 2 group}
\text{$W_{\fa,\chi}/W_{\fa,\chi}^0\subset W_{\chi}^\rR/(W_{\chi}^\rR)^0$ is a 2-group.}
\eeq 
Moreover,
\beq\label{adjoint case}
\text{if $G_\rs$ is adjoint, then $W_{\fa,\chi}=W_{\fa,\chi}^0=(W^\rC)^\theta\ltimes (W_{\chi}^R)^0$}.
\eeq

\subsection{Nilpotent support character sheaves}\label{sec-nilpotent support cs}

In this subsection we classify nilpotent support character sheaves. They exist in the cases we consider only when $\theta$ is inner and one expects that  it also holds for exceptional groups. 

Note that if the Fourier transform of $\on{IC}(\cO,\cE)$ has nilpotent support then the support of the Fourier transform is also $\bar\cO$. Moreover $\cO=\widecheck \cO$ and so $\cO$ is self dual. This can be seen as follows. Assume that $\fF\on{IC}(\cO,\cE)=\on{IC}(\cO',\cE')$.  By~\eqref{nilpsupport}, $\bar\cO'=\bar{\widecheck\cO}''$, where $\cO''\subset\bar\cO$. Now if $\widecheck\cO''$ is nilpotent, then $\widecheck\cO''=\cO''$. We conclude that $\cO'=\cO''\subset\bar\cO$. Similarly, we have $\cO\subset\bar\cO'$. Thus $\cO'=\cO$.  In particular, we see that if the Fourier transform of $\on{IC}(\cO,\cE)$ has nilpotent support, then $\cO$ is distinguished. That is, $\cO$ consists of distinguished nilpotent elements, i.e., $e\in\cN_1$ such that $\Lg_1^e:=Z_{\Lg_1}(e)$ consists of nilpotent elements.

Here is a general construction of nilpotent support character sheaves. 
Let $X$ denote the flag variety of $G$. Consider a closed $K$-orbit $Q$ on $X$ and its conormal bundle $T_Q^*X$.  Let $B\subset G$ be a point on $Q$, i.e., a $\theta$-stable Borel subgroup. Let $\Lb=\on{Lie}B$ and let $\Ln$ be the nilpotent radical of $\Lb$. We write $\Lb_i=\Lb\cap\Lg_i$, $\Ln_i=\Ln\cap\Lg_i$, $i=0,1$ and $B_K=B\cap K=B^\theta$. 
We have a moment map $$\mu: T_Q^*X=K\times^{B_K}\Ln_1\to \cN_1.$$
Since $\theta$ is inner, we have that (see, for example, \cite[\S3.2]{L})
\beq
\label{b1=n1}
\Lb_1=\Ln_1.
\eeq Using the functoriality of Fourier transform and \eqref{b1=n1}, we see that
\beqn
\fF(\mu_{*}\bC_{T_Q^*X}[-])\cong\mu_{*}\bC_{T_Q^*X}[-].
\eeqn
It follows that all direct summands of $\mu_{*}\bC_{T_Q^*X}[-]$  are nilpotent support character sheaves (up to shift). 

To describe the character sheaves that arise in this way, we first note that the image of $\mu(T_Q^*X)$ is the closure of the uique $K$-orbit $\cO$ in $\cN_1$ such that $\cO\cap\Lb_1=\cO\cap\Ln_1$ is dense in $\Lb_1=\Ln_1$; we denote this orbit by $\cO_B$. We say that the orbit $\cO_B$ is the Richardson orbit attached to (the $K$-orbit of) the $\theta$-stable Borel subgroup $B$. Let us now write 
\beqn
\pi_B:K\times^{B_K}\Ln_1\to\bar{\cO}_B \qquad \text{and}\qquad \mathring\pi_B:K\times^{B_K}\Ln^r_1\to {\cO}_B,\ \  \Ln^r_1= \Ln_1\cap {\cO}_B
\eeqn
for the moment map $\mu$ and its restriction to $\mu^{-1}({\cO}_B)$. 
 Let $\underline{\cN_1}=\cN_1/K$ denote the set of $K$-orbits on $\cN_1$. We write
\beq\label{richardson}
\underline{\cN_1^0}=\{\text{Richardson orbits attached to $\theta$-stable Borel subgroups}\}\subset\underline{\cN_1}.
\eeq
Let $A_K(\cO):=A_K(x)$, $x\in\cO$. We write $\widehat{A}_K(\cO)$ for the set of irreducible characters of $A_K(\cO)$.

For an orbit $\cO\in\underline{\cN_1^0}$, let $\Pi_\cO\subset\widehat{A}_K(\cO)$ denote the set of irreducible characters of $A_K(\cO)$ which appear in the permutation representations of $A_K(\cO)$ on the sets of irreducible components of $\pi_B^{-1}(x)$, where $x\in\cO$ and $B$ runs through the $\theta$-stable Borel subgroups with $\cO_B=\cO$ (up to $K$-conjugacy). Given $\phi\in\widehat{A}_K(\cO)$,  we write $\cE_\phi$ for the irreducible $K$-equivariant local system on $\cO$ corresponding to $\phi$. The $\on{IC}(\cO,\cE_\phi)$ are nilpotent support character sheaves. We see this as follows. Let $d$ be the fiber dimension of the fibration $\mathring\pi_B$. Decomposing $R^d(\mathring\pi_B)_*(\bC)$ into direct summands
\beqn
R^d(\mathring\pi_B)_*(\bC) \ \cong \ \bigoplus_{\phi\in\Pi_{\cO_B}} \cE_\phi\,
\eeqn
and running through all the closed $K$-orbits on $X$ we obtain the set of the $\cE_\phi$ as above. 

We will show that all nilpotent support character sheaves can be obtained in this way:
\beq
\label{nilp summands}
\on{Char}_K^\rn(\Lg_1)=\left\{\on{IC}(\cO,\cE_\phi)\,|\,\cO\in\underline{\cN_1^0},\ \phi\in\Pi_\cO\right\}.
\eeq
 
It turns out that the nilpotent support character sheaves can be determined as follows. Let $\cO\in\underline{\cN_1^0}$. Note that two $\theta$-stable Borel subgroups that are not conjugate under $K$ can give rise to the same Richardson orbit. In this case, the sets of irreducible components of $\pi_{B_1}^{-1}(x)$ and $\pi_{B_2}^{-1}(x)$, $x\in\cO$ can be different. However, we will show that we can find a $\theta$-stable Borel subgroup $B$ such that $\cO_B=\cO$ and such that $\Pi_\cO$ coincides with the set of irreducible characters of $A_K(\cO)$ which appear in the permutation representation of $A_K(\cO)$ on the set of irreducible components of $\pi_B^{-1}(x)$, $x\in\cO$. Thus, to determine $\on{Char}_K^\rn(\Lg_1)$, for any $\cO\in\underline{\cN_1^0}$  we only need to consider that specific $\theta$-stable Borel in~\eqref{nilp summands}.

\begin{remark}
Following Lusztig \cite{L3}, we say that  an orbit $\cO\subset\cN_1$ is $\fF$-thin, if $\fF(\on{IC}(\cO,\bC))$ has nilpotent support. It follows that  (in the cases considered here)
an orbit $\cO\subset\cN_1$ is $\fF$-thin, if and only if, $\cO$ is a Richardson orbit attached to some $\theta$-stable Borel subgroup.
This is consistent with the speculation in \cite{L3} that $\fF$-thin orbits exist only if $\theta$ is inner. 
\end{remark}
\subsection{Induced character sheaves}\label{sec-induction}
In order to produce all character sheaves, we consider parabolic induction from $\theta$-stable Levi subgroups. 
We recall the notion of induction functors in our setting (see \cite{H,L}). Let $L$ be a $\theta$-stable Levi subgroup contained in a $\theta$-stable parabolic subgroup $P\subset G$. We write 
\beqn
\text{$\Ll=\on{Lie}L$, $\Lp=\on{Lie}P$, $L_K=L\cap K$, $P_K=P\cap K$, $\Ll_1=\Ll\cap\Lg_1$, $\Lp_1=\Lp\cap\Lg_1$.}
\eeqn
The  parabolic  induction functor 
$
\on{Ind}_{\fl_1\subset\mathfrak{p}_1}^{\Lg_1}:D_{L_K}(\fl_1)\to D_K(\Lg_1)
$
is defined as follows. Consider the diagram
\beq\label{induction diagram}
\xymatrix{\mathfrak{l}_1&\Lp_1\ar[l]_-{\on{pr}}&K\times\fp_1\ar[l]_-{p_1}\ar[r]^-{p_2}&K\times^{P_K}\fp_1\ar[r]^-{{\pi}}&\Lg_1},
\eeq
where $p_1$ and $p_2$ are natural projection maps and ${\pi}:(k,x)\mapsto \on{Ad}(k)(x)$. The maps in~\eqref{induction diagram} are $K\times P_K$-equivariant, where $K$ acts trivially on $\Ll_1$ and $\Lp_1$, by left multiplication on the $K$-factor on $K\times\Lp_1$ and on $K\times^{P_K}\Lp_1$, and by adjoint action on $\Lg_1$, and $P_K$ acts on $\Ll_1$ by $a.l=\on{pr}(\on{Ad}a(l))$, by adjoint action on $\Lp_1$, by $a.(k,p)=(ka^{-1},\on{Ad}a(p))$ on $K\times\Lp_1$, trivially on $K\times^{P_K}\Lp_1$ and $\Lg_1$. 

Let $\cT$ be a complex in $D_{L_K}(\fl_1)$. Then $(\on{pr}\circ p_1)^*\cT\cong p_2^*\cT'$ for a well-defined complex $\cT'$ in  $D_K(K\times^{P_K}\fp_1)$. Set
$
\on{{I}nd}_{\fl_1\subset\mathfrak{p}_1}^{\Lg_1}\cT={\pi}_{!}\cT'[\dim P-\dim L].
$
Note that as the map ${\pi}$ is proper the induction functor takes semisimple objects to semisimple objects. 
Moreover, the Fourier transform commutes with induction:
\beq\label{commutativity}
\fF( \on{Ind}_{\fl_1\subset\mathfrak{p}_1}^{\Lg_1}\cT)\cong\on{Ind}_{\fl_1\subset\mathfrak{p}_1}^{\Lg_1}(\fF(\cT)).
\eeq

In our setting we will define a family of $\theta$-stable parabolic subgroups $P$ and study the character sheaves obtained by inducing full support character sheaves from the corresponding $\theta$-stable Levi subgroups $L$.

\subsection{An example}
\label{example}
In this subsection we consider the special case $(SL_2,SO_2)$ to illustrate how the description of the character sheaves depends on the choice of the splitting of the top row of~\eqref{diagram-fundamental group}. Strictly speaking we are not considering the $SL_2$ case in this paper, but this example illustrates the basic idea. The choice of a splitting comes up also in~\eqref{explicit fundamental}. It affects the labelling of the local systems in the way we illustrate here. 

We have $\Lg_1=\bC^2$ and $K=\bC^*$. The $\bC^*$ acts on $\bC^2$ by $c \cdot (x,y)= (cx,c^{-1}y)$. The nilpotent cone is given by $\cN_1=\{(x,y)\mid xy=0\}$. There are three nilpotent orbits, the zero orbit $\cO_0$, the two regular nilpotent orbits $\cO_1^{\rm I}$ and $\cO_1^{\rm II}$, corresponding to the coordinate axes. The orbits $\cO_1^{\rm I}$ and $\cO_1^{\rm II}$ are self dual and $\widecheck \cO_0 = \Lg_1^{rs}$. In this case diagram~\eqref{diagram-fundamental group}  becomes:
\beqn
\begin{CD}
1 @>>> \bZ/2\bZ@>>>(\bZ\oplus\bZ)/2\bZ@>{\tilde{q}}>>\bZ@>>> 1
\\
@. @| @VV{\tilde p}V @VV{p}V @.
\\
1 @>>> \bZ/2\bZ@>>>\bZ/4\bZ@>{q}>> \bZ/2\bZ@>>> 1\,.
\end{CD}
\eeqn
There are two character sheaves with full support where the $I=\bZ/2\bZ$ acts nontrivially: the Fourier transforms of the $\on{IC}$-sheaves associated to non-trivial local systems on the orbits $\cO_1^{\rm I}$ and $\cO_1^{\rm II}$. Once we choose a splitting of $\tilde q$ one of those local system corresponds to the trivial representation of $B_1=\bZ$ and the other to the non-trivial one.

\section{Nilpotent support character sheaves }\label{sec-biorbital}

In this section we determine all  nilpotent support character sheaves.
They only arise for the symmetric pairs $(G,K)$ in \S\ref{ssec-classical pairs} for which $\theta$ is an inner involution.  We use the notation of that section. We assume that $\theta$ is inner in this section. Recall our strategy from \S\ref{sec-nilpotent support cs}. At the beginning of this section we will concentrate on specifying for each $\cO\in\underline{\cN_1^0}$ (see~\eqref{richardson}) a special $\theta$-stable Borel $B$ such that we only need to consider
\beqn
\mathring \pi_B:K\times^{B_K}\Ln^r_1\to\cO_B=\cO
\eeqn
to obtain all character sheaves supported on $\bar\cO$. 

The set  $\underline{\cN_1^0}$ of Richardson orbits has been determined in each case by Trapa in \cite{T}.  We recall his result:
\begin{proposition}[\cite{T}]\label{prop-Richardson}
 Suppose that $\theta$ is inner.  Let $\cO_\lambda$ be a $K$-orbit in $\cN_1$ corresponding to a signed Young diagram $\lambda$ of the form~\eqref{signed Young diagram-2}. Then $\cO_\lambda\in\underline{\cN_1^0}$  if and only if
\begin{eqnarray*}
\text{{\rm{(AIII)}}} &&\text{ for each $i$, either $p_i=0$ or $q_i=0$.}\\
\text{{\rm{(CI)}}} &&\text{ for $\lambda_i$ odd, $p_i=q_i=0$ , and for $\lambda_i$ even, either $p_i=0$ or $q_i=0$.}\\
\text{{\rm{(CII)}}}&&\text{ for $\lambda_i$ even, $p_i=q_i\leq 1$, and for $\lambda_i$ odd, either $p_i=0$ or $q_i=0$.}\\
\text{{\rm{(DIII)}}}&& \text{ for $\lambda_i$ odd, $p_i=q_i\leq 1$, and for $\lambda_i$ even, either $p_i=0$ or $q_i=0$.}\\
\text{{\rm{(BDI)}}}&& \text{ the following two conditions hold}
\end{eqnarray*}
\begin{enumerate}
\item for $\lambda_i$ even, $p_i=q_i=0$, and for $\lambda_i$ odd, either $p_i=0$ or $q_i=0$. Namely the signed Young diagram  $\lambda$ is of the form $(2\mu_1+1)_{\epsilon_1}(2\mu_2+1)_{\epsilon_2}\cdots(2\mu_k+1)_{\epsilon_k}$, where $\mu_1\geq\mu_2\geq\cdots\geq\mu_k\geq 0$, $\epsilon_i\in\{0,1\}$ and $\epsilon_i=\epsilon_j$ if $\mu_i=\mu_j$. \item 
\begin{itemize}
\item[(a)] $\text{if $N$ is odd,  $\epsilon_1\equiv q\,\, \nmod 2$ and $\epsilon_{2i}+\mu_{2i}\equiv \epsilon_{2i+1}+\mu_{2i+1}\,\nmod 2$, $i\geq 1$;}$
\item[(b)]$\text{if $N$ is even,  $\epsilon_{2i-1}+\mu_{2i-1}\equiv \epsilon_{2i}+\mu_{2i}\mod 2$, $i\geq 1$.}$
\end{itemize}
\end{enumerate}
 \end{proposition}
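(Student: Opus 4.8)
Although this is \cite{T}, I sketch a direct proof in the geometric language of \S\ref{sec-nilpotent support cs}. Since $\theta$ is inner we have $\Lb_1=\Ln_1$ for every $\theta$-stable Borel $B$ (see~\eqref{b1=n1}), and $\cO_B$ is by definition the unique $K$-orbit that is dense in $\Lb_1=\Ln_1$. Thus the Proposition is equivalent to the statement that, as $B$ runs over representatives of the finitely many $K$-conjugacy classes of $\theta$-stable Borel subgroups, the signed Young diagram of a generic element of $\Lb_1$ runs exactly through the diagrams listed. The plan is to enumerate the $\theta$-stable Borels, compute $\cO_B$ for each, and match the outcome against the list.

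\emph{Step 1: enumerate the $\theta$-stable Borels.} In each matrix model of \S\ref{ssec-classical pairs} one has $\theta=\on{Ad}(s)$, where $s$ acts as $+1$ on $V^+$ and $-1$ on $V^-$ (up to a central scalar). Hence the Borel $B=\on{Stab}(F_\bullet)$ attached to a complete flag $F_\bullet$ --- a complete isotropic flag in types C and BD --- is $\theta$-stable if and only if every $F_i$ is $\theta$-stable, i.e.\ $F_i=(F_i\cap V^+)\oplus(F_i\cap V^-)$. Such $B$ are therefore classified, up to the action of $K$, by a word $w$ in the letters $\{+,-\}$ of length $\dim V$ recording in which of the two subspaces each successive step of the flag grows, subject to the symmetry constraints imposed by the bilinear or symplectic form in types C and BD, and to the requirement that $w$ contain exactly $p$ plusses and $q$ minuses.

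\emph{Step 2: compute $\cO_B$.} In a basis adapted to $F_\bullet$, an element of $\Ln_1$ is strictly upper triangular and exchanges $V^+$ with $V^-$, and a generic such element is the ``$\theta$-regular'' nilpotent of $\Ln$; its Jordan type, together with the signs attached to its lowest weight vectors, can be read off from $w$ by an iterated rank computation --- equivalently, from the $\fs\fl_2$-triple bookkeeping of \S\ref{nilp comp}. This yields an explicit map $w\mapsto\lambda(w)$ from admissible words to signed Young diagrams. The ``if'' direction of the Proposition is then proved by exhibiting, for each diagram on the list, an admissible word $w$ with $\lambda(w)$ equal to it, and the ``only if'' direction by checking that $\lambda(w)$ always lies on the list. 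As a consistency check one has $\dim\cO_B=\dim(K/B_K)+\dim\Lb_1$, the moment map $K\times^{B_K}\Ln_1\to\overline{\cO_B}$ being generically finite onto its image, and the left-hand side is computable from $\lambda(w)$ via the centralizer-dimension formulas underlying \eqref{component group SL}--\eqref{component group-C}.

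\emph{The main obstacle} is the combinatorial bookkeeping in type BD, i.e.\ part (ii): there one must show that the three conditions ``$F_\bullet$ isotropic'', ``$F_\bullet$ $\theta$-stable'' and ``signature $(p,q)$'', together with the case split $N$ odd versus $N$ even, are equivalent, in both directions, to precisely the congruences $\epsilon_1\equiv q$ and $\epsilon_{2i}+\mu_{2i}\equiv\epsilon_{2i+1}+\mu_{2i+1}$ (for $N$ odd), respectively $\epsilon_{2i-1}+\mu_{2i-1}\equiv\epsilon_{2i}+\mu_{2i}$ (for $N$ even). A cleaner route --- the one taken in \cite{T} --- passes through the Kostant--Sekiguchi correspondence: the orbits in question are exactly the associated varieties of the limits of discrete series of the real form attached to $(G,K)$, and the classification of the corresponding $\theta$-stable (Borel--de Siebenthal) data on the real side produces the list directly; I would use that description as an independent check on the output of the flag computation above.
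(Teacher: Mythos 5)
This statement is not proved in the paper at all: it is quoted verbatim from Trapa \cite{T}, so there is no internal argument to compare with. Judged on its own terms, your proposal is a plan rather than a proof. Step 1 (classifying $\theta$-stable Borels by $\{+,-\}$-words, i.e.\ by the sequences $(a_i,b_i)$ the paper uses in Section~\ref{sec-biorbital}) is fine, but Step 2 is where the entire content of the Proposition lives, and it is only asserted: you never write down the map $w\mapsto\lambda(w)$, so neither the ``if'' direction (exhibiting a word realizing each listed diagram) nor the ``only if'' direction (showing every $\lambda(w)$ lies on the list) is actually established. In particular the parity congruences in part (ii) --- which you yourself single out as ``the main obstacle'' --- are exactly the nontrivial assertion, and your proposed resolution is to fall back on the Kostant--Sekiguchi/associated-variety description ``taken in \cite{T}'', i.e.\ on the very reference the statement is being attributed to. As it stands the argument is circular at its critical point; to make it a genuine independent proof you would need to carry out the rank/Jordan-type computation for a generic element of $\Ln_1$ in terms of the word $w$ (the kind of recursive step the paper performs, using \cite[Proposition 7.1]{T}, inside the proof of Proposition~\ref{sfiber-2}) and then do the bijective matching with the list, including the case analysis on $N$ odd/even.

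A further concrete error: your ``consistency check'' $\dim\cO_B=\dim(K/B_K)+\dim\Lb_1$, justified by the moment map $K\times^{B_K}\Ln_1\to\overline{\cO_B}$ being generically finite, is false in the symmetric-pair setting. Unlike the classical Richardson situation for $G$ acting on $T^*(G/P)$, here the fibers $\pi_B^{-1}(x)$ over $x\in\cO_B$ are unions of components of Springer fibers and are typically of positive dimension; the paper's own computation in the proof of Proposition~\ref{sfiber-2} (the displayed recursion for $\dim\tilde\pi_B^{-1}(x)$, with increments such as $(m^2-m)/2$) exhibits exactly this in type BD\,I. So the correct relation is $\dim\cO_B=\dim(K/B_K)+\dim\Ln_1-d$ with $d$ the generic fiber dimension, and the proposed dimension check cannot be used as stated.
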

Let us write
\beqn
\on{SYD}^0_{(G,K) }
\eeqn
for the set of signed Young diagrams corresponding to the orbits in $\underline{\cN_1^0}$.

\begin{thm}
\label{nil thm-1}
Assume that $(G,K)$ is $
 (Sp_{2n},Sp_{2p}\times Sp_{2q}),\  (Sp_{2n},GL_n),$ or $
 (SO_{2n},GL_n).
$
 The set of nilpotent support character sheaves is
 \beqn
 \on{Char}_K^\rn(\Lg_1)=\{\on{IC}(\cO,\bC)\,|\,\cO\in\underline{\cN_1^0}\}.
 \eeqn
\end{thm}
\begin{remark} The theorem holds also for the symmetric pair $(GL_n, GL_p\times GL_q)$. This is a special case of \cite{L}.
\end{remark}

Suppose that $(G,K)=(SO_N,S(O_p\times O_q))$, where either $p$ or $q$ is even. Let $\cO\in\underline{\cN_1^0}$. We define a set of characters $\Pi_\cO\subset \widehat A_K(\cO)$ and we show that  it is exactly  the set of characters $\Pi_\cO\subset \widehat A_K(\cO)$ defined in \S\ref{sec-nilpotent support cs}. Proposition~\ref{prop-Richardson} implies that $\cO=\cO_\mu$, where 
\beq\label{the orbit mu}
\mu=(2\mu_1+1)_{\epsilon_1}^{m_1}(2\mu_2+1)^{m_2}_{\epsilon_2}\cdots(2\mu_s+1)^{m_s}_{\epsilon_s},
\eeq
$\mu_1>\mu_2>\cdots>\mu_s\geq 0$, $m_i>0$  and $\epsilon_i\in\{0,1\}$, $1\leq i\leq s$. 
From~\S\ref{nilp comp} we conclude 
\beq
\label{comp group explicit}
A_K(\cO)\cong S(O_{m_1}\times \cdots\times O_{m_s})/S(O_{m_1}\times \cdots\times O_{m_s})^0\cong(\bZ/2\bZ)^{s-1}.
\eeq
Let $\delta_i$ correspond to the non-trivial element in $S(O_{m_{i}}\times O_{m_{i+1}})/(SO_{m_{i}}\times SO_{m_{i+1}})=\bZ/2\bZ$, $1\leq i\leq s-1$. Then $A_K(\cO)$ is generated by $\delta_i$, $1\leq i\leq s-1$.

We define $\Omega_\cO=\Omega_\mu\subset\{1,\ldots,s\}$ to be the set of $j\in[1,s]$ such that 
\begin{enumerate}
\item $\sum_{a= j}^sm_a$ is even,
\item if $j\geq 2$, then either $\mu_{j-1}\geq \mu_{j}+2$ or $\epsilon_{j-1}= \epsilon_j$.
\end{enumerate}
We set
$l_\mu=l_\cO:=|\Omega_\cO|.
$ Suppose that $\Omega_\cO=\{j_1,\ldots,j_l\}$, $l=l_\cO$, and we write $j_{l+1}=s+1$. Note that $j_1=1$ if and only if $N$ is even. Thus $l_\cO\geq 1$ when $N$ is even. 
The subset $\Pi_\cO\subset\widehat{A}_K(\cO)$  is defined as follows
\beq
\label{char-bi}
\Pi_\cO=\{\chi\in \widehat{A}_K(\cO)\mid \chi(\delta_r)=1 \ \text{if}\  r+1\notin \Omega_\cO \}\,.
\eeq
In other words $\chi(\delta_r)$ is allowed to take the value $-1$ precisely when $r+1\in \Omega_\cO$.
In particular, $|\Pi_\cO|=2^{l_\cO}$ if $N$ is odd and $|\Pi_\cO|=2^{l_\cO-1}$ if $N$ is even as in the latter case $j_1=1$.

\begin{thm}
\label{nil thm-2}
Assume that $(G,K)=(SO_N,S(O_p\times O_q))$, where either $p$ or $q$ is even. The set of nilpotent support character sheaves  is
\beqn
\begin{gathered}
\on{Char}_K^\rn(\Lg_1)=\{\on{IC}(\cO,\cE_\phi)\,|\,\cO\in\underline{\cN_1^0},\ \phi\in\Pi_\cO\},
\end{gathered}
 \eeqn
where $\Pi_\cO$ is defined in~\eqref{char-bi}.
\end{thm}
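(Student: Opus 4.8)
The plan is to prove the two inclusions in the asserted equality separately, within the framework of \S\ref{sec-nilpotent support cs}. Recall from there that any character sheaf with nilpotent support has the form $\on{IC}(\cO,\cE)$ with $\cO$ distinguished and self-dual, $\widecheck\cO=\cO$; granting (as I will check in step~(b)) that such orbits are exactly the Richardson orbits attached to $\theta$-stable Borels, Proposition~\ref{prop-Richardson}(ii) shows that the only $\cO$ that can occur are the $\cO_\mu\in\underline{\cN_1^0}$ with $\mu$ as in~\eqref{the orbit mu}. Recall also that, by~\eqref{nilp summands}, $\on{Char}(\Lg,K)^{\mathrm{n}}=\{\on{IC}(\cO,\cE_\phi)\mid \cO\in\underline{\cN_1^0},\ \phi\in\Pi_\cO\}$, where $\Pi_\cO\subset\widehat A_K(\cO)$ consists of the irreducible characters occurring in the permutation representations of $A_K(\cO)$ on the sets of irreducible components of the fibres $\pi_B^{-1}(x)$, $x\in\cO=\cO_B$. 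Thus the statement reduces to: (a) identifying this representation-theoretic $\Pi_\cO$ with the combinatorial set~\eqref{char-bi}, and (b) verifying~\eqref{nilp summands} in the present case, i.e.\ that there are no further nilpotently-supported character sheaves.

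For step~(a) I would first enumerate the $K$-conjugacy classes of $\theta$-stable Borel subgroups $B$ with $\cO_B=\cO_\mu$. In the model of \S\ref{ssec-classical pairs}, such a $B$ is a complete isotropic flag in $V$ adapted to $V=V^+\oplus V^-$, and its Richardson orbit is obtained by interleaving, in an admissible order, $m_i$ copies of a $(2\mu_i+1)$-dimensional ``block of type $\epsilon_i$''; two such orders yield $K$-conjugate Borels unless they differ by transposing adjacent blocks $(2\mu_{j-1}+1)_{\epsilon_{j-1}}$, $(2\mu_j+1)_{\epsilon_j}$ with $\mu_{j-1}=\mu_j$ (forcing $\epsilon_{j-1}=\epsilon_j$) or with $\mu_{j-1}=\mu_j+1$ and $\epsilon_{j-1}\neq\epsilon_j$ --- precisely the failure of condition~(2) in the definition of $\Omega_\cO$. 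For a fixed such $B$ one then describes $\pi_B^{-1}(x)$, $x\in\cO_\mu$: its irreducible components are parametrised by the combinatorial data attached to the $\mathfrak{sl}_2$-grading of \S\ref{nilp comp}, and the group $A_K(\cO)\cong(\bZ/2\bZ)^{s-1}$ of~\eqref{comp group explicit}, generated by the $\delta_i$, acts on this index set; the determinant-one constraint in $S(O(m_i)\times O(m_{i+1}))$ makes $\delta_i$ act trivially unless $\sum_{a\ge i+1}m_a$ is even, which is condition~(1) in the definition of $\Omega_\cO$. Combining these, the characters of $A_K(\cO)$ that occur over all admissible $B$ are exactly those permitted to be non-trivial on $\delta_r$ precisely when $r+1\in\Omega_\cO$, i.e.\ the set~\eqref{char-bi}; moreover, as announced in \S\ref{sec-nilpotent support cs}, a single well-chosen $B$ already realises all of them.

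Step~(b) I would settle by a counting argument together with a dual-strata computation. On the one hand, for a distinguished orbit $\cO\notin\underline{\cN_1^0}$ the explicit description of the dual strata in Appendix~\ref{dual strata} gives $\widecheck\cO\supsetneq\cO$, so by~\eqref{nilpsupport} no local system on $\cO$ produces a nilpotently-supported character sheaf; this justifies the reduction used at the outset. On the other hand, to see that no character $\psi\notin\Pi_{\cO_\mu}$ occurs, I would compute $|\on{Char}(\Lg,K)^{\mathrm{n}}|$ a second way: the total number of character sheaves is known via the Fourier transform from $|\cA(\Lg,K)|$, and subtracting the full-support ones of \S\ref{sec-full support} and the remaining (intermediate-support) ones of \S\ref{sec-support} leaves exactly $\sum_{\cO_\mu\in\underline{\cN_1^0}}|\Pi_{\cO_\mu}|$, the arithmetic being supplied by the generating-function identities of Appendix~\ref{combinatorics}; since the sheaves built in step~(a) are already known to be distinct members of $\on{Char}(\Lg,K)^{\mathrm{n}}$, equality is forced.

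The main obstacle will be step~(a): the explicit geometry of the fibres $\pi_B^{-1}(x)$ over distinguished nilpotents --- the combinatorial description of their irreducible components and of the $(\bZ/2\bZ)^{s-1}$-action on them --- together with the bookkeeping that matches conditions~(1) and~(2) defining $\Omega_\cO$ with, respectively, the triviality of the $\delta_i$-action on components and the existence of non-$K$-conjugate $\theta$-stable Borels sharing the Richardson orbit $\cO_\mu$. Once this is in hand, step~(b) is essentially the combinatorics of Stanton's appendix.
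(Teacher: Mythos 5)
Your plan has the same overall architecture as the paper's proof (the inclusion $\supseteq$ via the Springer-type fibres $\pi_B$ over $\theta$-stable Borels from \S\ref{sec-nilpotent support cs}, and exhaustion via a count against $|\cA(\fg,K)|$ using the generating functions of Appendix~\ref{combinatorics}), but two of your steps do not hold up as stated. First, what you defer as ``the main obstacle'' is in fact the entire mathematical content of the paper's contribution to this theorem: the inductive construction of one special $\theta$-stable Borel $B$ with $\cO_B=\cO_\mu$ and the computation $\oh^{top}(\tilde\pi_B^{-1}(x))\cong\bigotimes_k\oh^{top}(\on{OGr}(\sigma_k/2,W_k))$ together with the $A_K(x)$-action (Proposition~\ref{sfiber-2}). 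Without it you have not exhibited the sheaves $\on{IC}(\cO,\cE_\phi)$, $\phi\in\Pi_\cO$, as character sheaves, so even the inclusion $\supseteq$ is unproved. Moreover your proposed dictionary is off: in the paper condition~(2) defining $\Omega_\cO$ is not read off from $K$-conjugacy of distinct Borels sharing the Richardson orbit; it governs, in the induction for the single chosen $B$, when a new orthogonal-Grassmannian factor (hence a new two-element component set on which a $\delta_r$ can act nontrivially) appears --- see the case analysis establishing~\eqref{omegas}. Your claim about which block transpositions yield $K$-conjugate Borels is asserted, not proved, and the paper deliberately avoids enumerating all Borels: that no further characters arise from other Borels is absorbed into the final count.

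Second, your justification for restricting at the outset to $\underline{\cN_1^0}$ is false. For \emph{every} distinguished orbit one has $\fa^\phi=0$, so by the definition~\eqref{def of check o} $\widecheck\cO=\cO$; the dual stratum of a distinguished orbit never strictly contains it. Since there exist distinguished orbits outside $\underline{\cN_1^0}$ (e.g.\ $\cO_{5_+3_+1_+}$ for $(SO(9),S(O(6)\times O(3)))$: all parts odd and single-signed, hence distinguished, but the parity condition in Proposition~\ref{prop-Richardson}(ii) fails), your dual-strata argument excludes nothing, and a priori such orbits could support nilpotent character sheaves; in the paper their exclusion, like the completeness of $\Pi_\cO$, is obtained only a posteriori from the count in \S\ref{ssec-proof BD}. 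Relatedly, your ``subtract the full-support and intermediate-support ones'' count is circular as phrased: the exact numbers of full-support and induced character sheaves are not known independently (Corollary~\ref{full support corollary} is itself a consequence of the count). The argument must be organized as in the paper: the constructed list (nilpotent, induced, and full-support sheaves) consists of pairwise non-isomorphic character sheaves whose total cardinality equals $|\cA(\fg,K)|$ by Stanton's identities, and this single equality simultaneously forces completeness of all three families, in particular $\on{Char}(\Lg,K)^{\mathrm{n}}=\{\on{IC}(\cO,\cE_\phi)\mid\cO\in\underline{\cN_1^0},\ \phi\in\Pi_\cO\}$.
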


The fact that the sheaves in Theorems~\ref{nil thm-1} and~\ref{nil thm-2} belong to $\on{Char}_K^\rn(\Lg_1)$ follows from the discussion in~\S\ref{sec-nilpotent support cs}, Propositions~\ref{sfiber-1} and~\ref{sfiber-2} below. The fact that these are all of $\on{Char}_K^\rn(\Lg_1)$ will follow once we show that we have constructed sufficiently many character sheaves.

\subsection{Conjugacy classes of $\theta$-stable Borel subgroups}

The $K$-conjugacy classes of $\theta$-stable Borel subgroups of $G$ are precisely the closed $K$-orbits on the flag variety $X=G/B$. Let $T$ be a maximal torus of $K$ which is also a maximal torus in $G$ as $\theta$ is inner. The closed $K$-orbits are given by $K$-orbits of the fixed point set $X^T$. The Weyl group $W(G,T)$ acts transitively on $X^T$ and in this manner we get an identification
\beqn
    \{\text{closed $K$-orbits on $X=G/B$}\} \longleftrightarrow W(G,T)/W(K,T)\,.
\eeqn
Recall our concrete description of  $(G,K)$ and the subspaces $V^+,\,V^-$ of $V$ (see \S\ref{ssec-classical pairs}). We denote by $V_i$ (resp. $V_i^+,\,V^-_i$) a subspace of $V$ (resp. $V^+,V^-$) of dimension $i$.
Let $n=[N/2]$ if $G=SO_N$. Let $\tilde{K}=O_p\times O_q$ in type D\rm{I} and $\tilde{K}=K$ otherwise.  
It is easy to see that
the $\tilde{K}$-conjugacy classes of $\theta$-stable Borel subgroups in $G$ are parametrized by {\em ordered} sequences $a_1,\ldots,a_n$, $a_i\in\{0,1\}$, $1\leq i\leq n$, such that
\bern
&&{\sum_{i=1}^na_i=q}\text{  (A\rm{III}) (C\rm{II})},\qquad 
{\sum_{i=1}^na_i=[\frac{q}{2}]}\text{ (BD\rm{I}).}
\eern
Let $B_{(a_i)}$ denote a $\theta$-stable Borel subgroup of $G$ corresponding to such a sequence $(a_i)$. Concretely $B_{(a_i)}$ can be taken as the subgroup of $G$ which stabilizes a flag of the following form
\begin{eqnarray*}
0\subset V_1\subset V_2\subset\cdots\subset V_n=V&&\text{ (A\rm{III})},\\
0\subset V_1\subset V_2\subset\cdots\subset V_n\subset V_n^\p\subset\cdots\subset V_1^\p\subset V&&\text{ others},
\end{eqnarray*}where
$
V_i=V_{i-\sum_{j=1}^ia_j}^+\oplus V_{\sum_{j=1}^ia_j}^-,\ i=1,\ldots,n$.

Let $\cO_{(a_i)}=\cO_{B_{(a_i)}}$. 
In the case of type D\rm{I}, each $\tilde{K}$-orbit of $\theta$-stable Borel subgroups in $G$ decomposes into two $K$-orbits; we denote them by $B_{(a_i)}^{\omega}$, $\omega=\rm{I,II}$. Note that $\cO_{B_{(a_i)}^{I}}=\cO_{B_{(a_i)}^{\rm{II}}}$.

\subsection{Springer fibers}
Let $B$ be a $\theta$-stable Borel subgroup of $G$. Recall the map $\pi_B:K\times^{B_K}\Ln_1\to\bar\cO_B$. Let $x\in\cO_B$. The fiber $\pi_B^{-1}(x)$ is $\cB_x\cap Q$, where $\cB_x\subset X$ is the Springer fiber and $Q$ is the $K$-orbit of $B$ in $X$. Moreover, it is a union of irreducible (actually connected) components of the Springer fiber $\cB_x$ and these components form a single orbit under $A_{K}(x)$ because $T^*_QX$ is $K$-connected.

The following proposition  is proved in \cite[Corollary 33]{BZ} for type C\rm{I}, and in \cite{T2} for types A\rm{III}, C\rm{II}, and D\rm{III}.

\begin{proposition}[\cite{BZ}, \cite{T2}]\label{sfiber-1}
Assume that $(G,K)$ is not of type {\rm{BD\rm{I}}}. Then the fibers $\pi_B^{-1}(x)$ are irreducible for $x\in\cO_B$.
\end{proposition}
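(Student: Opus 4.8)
The statement is due to \cite{BZ,T2}, and the quickest route is simply to invoke \cite[Corollary 33]{BZ} in type C\,\rm{I} and \cite{T2} in types A\,\rm{III}, C\,\rm{II}, D\,\rm{III}. I will nevertheless outline the uniform geometric argument I would use, as it also makes transparent why type BD\,\rm{I} must be excluded. Since $\theta$ is inner, $\Lb_1=\Ln_1$ by~\eqref{b1=n1}, and $\pi_B^{-1}(x)=\cB_x\cap Q$, the set of $x$-stable flags lying in the closed $K$-orbit $Q$. Using the concrete model of \S\ref{ssec-classical pairs} this becomes an explicit variety: $Q$ is the set of (isotropic) flags $V_\bullet$ adapted to $V=V^+\oplus V^-$, i.e.\ with $V_i=(V_i\cap V^+)\oplus(V_i\cap V^-)$ and with the jump pattern in the $V^+$ and $V^-$ directions prescribed by the sequence $(a_i,b_i)$ attached to $B$; and $\pi_B^{-1}(x)$ is the locus of such flags that are in addition stable under $x$. (Equivalently, one may argue via the fact recalled above that the components of $\pi_B^{-1}(x)$ form a single $A_K(x)$-orbit, so irreducibility is the assertion that $A_K(x)$ fixes each component; the geometric descent below produces such a fixed structure.)

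I would then argue by induction on $\dim V$. Peel off the smallest step $V_1$: it is a line in $V^{\epsilon_1}\cap\ker x$ (an isotropic such line in the orthogonal/symplectic cases), where $\epsilon_1$ is the first sign of the sequence. Restricting the remaining flag to $V_1^{\perp}/V_1$ (to $V/V_1$ in type A\,\rm{III}), equipped with the induced form, the induced nilpotent $\bar x$, and the induced involution, one obtains a fibration of $\cB_x\cap Q$ over the space of admissible lines $V_1$, whose fibers are the analogous varieties for the smaller symmetric pair. Using Proposition~\ref{prop-Richardson} one checks that $\bar x$ again lies in the Richardson orbit attached to a $\theta$-stable Borel of the smaller pair, so the inductive hypothesis applies; since the base of the fibration and its fibers are irreducible, $\cB_x\cap Q$ is irreducible. (Some routine care is needed because the Jordan type of $\bar x$ can jump as $V_1$ varies; one handles this in the standard Spaltenstein fashion, restricting to the open locus where $\bar x$ has the expected type and controlling the dimension of the complement.)

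The real work is the inductive step, in two parts: (i) verifying that after peeling off $V_1$ the reduced data still satisfies the Richardson-for-$\theta$-stable-Borel conditions of Proposition~\ref{prop-Richardson}, a direct combinatorial check on signed Young diagrams; and (ii) showing that the base of the fibration, the variety of admissible lines $V_1$, is irreducible. Part (ii) is exactly where type BD\,\rm{I} breaks: there the summands $V^{\pm}$ carry a non-degenerate \emph{symmetric} form, so at some stage of the peeling one is forced to choose an isotropic line inside a subquotient of $V^{+}$ (or $V^{-}$) equipped with a non-degenerate symmetric form, and when that subquotient is a hyperbolic plane the isotropic lines form two points — the base disconnects and $\cB_x\cap Q$ acquires two components, reflecting the larger group $A_K(x)$ permuting them. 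In type A\,\rm{III} there is no form at all; in types C\,\rm{I} and D\,\rm{III} the relevant subspaces of $V^{\pm}$ are totally isotropic; and in type C\,\rm{II} the form on $V^{\pm}$ is symplectic, so every line is isotropic. Hence in all four cases the base at each step is a genuine projective space (or, in steps constrained only by isotropy in $V$, an irreducible quadric), and the obstruction does not arise.
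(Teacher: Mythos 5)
Your primary route---invoking \cite[Corollary 33]{BZ} for type C\,I and \cite{T2} for types A\,III, C\,II and D\,III---is exactly what the paper does: Proposition~\ref{sfiber-1} is stated there with these citations and no independent proof, so on that score the two coincide.

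As for your supplementary sketch, treat it as motivation rather than a proof. The peeling-off step and your explanation of why type BD\,I must be excluded (a choice of isotropic line in a two-dimensional quadratic space disconnects the base) are consistent with Proposition~\ref{sfiber-2}, and in the non-BD\,I cases the space of admissible lines $V_1\subset V^{\epsilon_1}\cap\ker x$ is indeed a full projective space. But the sentence ``since the base of the fibration and its fibers are irreducible, $\cB_x\cap Q$ is irreducible'' is not yet an argument: the map to the space of lines is not a fibration, since the induced nilpotent $\bar x$ on $V_1^{\perp}/V_1$ lies in the Richardson orbit of the smaller pair only for $V_1$ in an open locus (so your appeal to Proposition~\ref{prop-Richardson} applies only there), and over the complement the fibers are moment-map fibers over non-Richardson points, which can have strictly larger dimension. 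What one actually needs is that $\pi_B^{-1}(x)=\cB_x\cap Q$ is pure-dimensional (being a union of Springer-fiber components) together with a bound showing the preimage of the bad locus has dimension strictly less than $\dim(\cB_x\cap Q)$; establishing that bound is precisely the explicit work carried out in \cite{BZ} and \cite{T2}, not a routine Spaltenstein remark. Note also that the only cases with real content are type A\,III for $G=SL(n)$ and type C\,I: in types C\,II and D\,III one has $A_K(x)=1$ (and likewise for the Richardson orbits in the $PGL$ case), so irreducibility is immediate from the fact recalled just before the proposition that the components of $\pi_B^{-1}(x)$ form a single $A_K(x)$-orbit.
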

As was pointed out earlier, it follows from this proposition and the discussion in~\S\ref{sec-nilpotent support cs} that the IC sheaves in Theorem~\ref{nil thm-1} are nilpotent support character sheaves.

In the remainder of this subsection we study the pairs $(G,K)=(SO_N,S(O_p\times O_q))$.  

Let $\cO=\cO_\mu\in\underline{\cN_1^0}$, where 
$\mu$ is as in~\eqref{the orbit mu}. Recall the set $\Omega_\cO=\Omega_\mu=\{j_1,\ldots,j_{l}\}$, $l=l_\cO$, defined at the beginning of the section.
Let 
\beqn
\text{$\sigma_k=\sum_{a=j_k}^{j_{k+1}-1}m_a$ for $1\leq k\leq l$.}
\eeqn 
The $\sigma_k$ are even. Let $x\in\cO$. Let $\{x,y,h\}$ be a normal $\mathfrak{sl}_2$-triple. Then $h\in\Lg_0$. In particular, $hV^+\subset V^+$ and $hV^-\subset V^-$. Let 
\beqn
V=\oplus_{i\geq 0} V(i)_+^{\oplus p_i}\oplus_{i\geq 0}V(i)_-^{\oplus q_i}
\eeqn
 be the decomposition of $V$ into irreducible $\mathfrak{sl}_2$-modules under $\{x,y,h\}$, where $V(i)_+$ (resp. $V(i)_-$) denotes an irreducible $\mathfrak{sl}_2$-module of highest weight $i$ such that the lowest weight vector lies in $V^+$ (resp. $V^-$). Let $V(i)=V(i)_+$ or $V(i)_-$. We set
\beqn
W_k=\bigoplus_{a=j_k}^{j_{k+1}-1}V(2\mu_a)_0^{\oplus m_a}, \ 1\leq k\leq l,
\eeqn
where $V(2\mu_a)_0$ is the $0$-weight space of $V(2\mu_a)$.
We have $\on{dim}W_k=\sigma_k$ and $(,)|_{W_k}$ is non-degenerate.

 For a subspace $W$ of $V$ let
\beqn
\on{IGr}(k,W)\ = \ \{V_k\subset W\mid \dim V_k =k,\,V_k \ \text{is isotropic in $V$}\}\,.
\eeqn

\begin{proposition}
\label{sfiber-2}
There exists a $\theta$-stable Borel subgroup $B\subset G$ such that $\cO=\cO_B$ and such that for  $x\in\cO$  we have
\beqn
\oh^{top}(\tilde{\pi}_B^{-1}(x)) \cong \bigotimes_{k=1}^l \oh^{top}(\on{IGr}(\sigma_k/2,W_k))\,,
\eeqn
where $\tilde{\pi}_B:\tilde{K}\times^{B_K}\Ln_1\to\bar\cO_B$. This isomorphism is compatible with the action of $A_K(x)$.
\end{proposition}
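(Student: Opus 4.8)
The plan is to produce the Borel subgroup $B$ explicitly in terms of a sequence $\{(a_i,b_i)\}$ adapted to the blocks $W_1,\dots,W_l$, and then to compute the Springer fiber component group-equivariantly by reducing to a product of smaller problems, one for each block. First I would choose $B$ as follows. The orbit $\cO_\mu$ has all parts odd, $\mu_i = 2\mu_i+1$, and by Proposition~\ref{prop-Richardson}(ii) it is Richardson for a $\theta$-stable Borel. Within each block indexed by $k$ (the consecutive parts $j_k,\dots,j_{k+1}-1$) the multiplicities $m_a$ sum to the even number $\sigma_k$; I would choose the $a_i$'s so that, inside each block, exactly half the rows of each length $2\mu_a+1$ begin with $+$ and half begin with $-$ — this is possible precisely because $\sigma_k$ is even and the $\Omega_\mu$ condition guarantees the parity bookkeeping is consistent. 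Concretely this means prescribing, for the flag $0\subset V_1\subset\cdots\subset V_n$, how many of the basis vectors picked up at each step come from $V^+$ versus $V^-$, in a pattern that ``interleaves'' the signs as evenly as possible inside each block. One then checks $\cO_B=\cO_\mu$ by the standard description of the dense orbit in $K\cdot\mathfrak n_1$ via its signed Young diagram.

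The next step is to identify the fiber $\pi_B^{-1}(x) = \cB_x\cap Q$. Using the normal $\mathfrak{sl}_2$-triple $\{x,y,h\}$ and the decomposition $V=\bigoplus V(i)_\pm^{\oplus\ldots}$ from \S\ref{nilp comp}, together with the nondegenerate symmetric form $(\,,\,)_i$ on the lowest-weight (equivalently, by the $\mathfrak{sl}_2$-action, the $0$-weight) space of each isotypic piece, I would show — following the argument of Proposition~\ref{sfiber-1}'s sources (\cite{BZ},\cite{T2}) but now in type BD — that the Springer fiber component problem ``factors through the middle weight spaces.'' For odd-length Jordan blocks the relevant form on the $0$-weight space is symmetric, and choosing a point of the Springer fiber lying over $x$ and contained in $Q$ amounts to choosing a flag compatible with $x$ that is additionally compatible with the $V^\pm$-decomposition; the top cohomology of the space of such flags, as a component set, only sees the discrete choice of a maximal isotropic subspace in each block space $W_k$ for the form $(\,,\,)_{2\mu_a}$ restricted there. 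Hence $\oh^{top}(\tilde\pi_B^{-1}(x))\cong\bigotimes_k\oh^{top}(\on{OGr}(\sigma_k/2,W_k))$. The point of working with $\tilde K = O(p)\times O(q)$ rather than $K$ is that the two connected components of each orthogonal Grassmannian $\on{OGr}(\sigma_k/2,W_k)$ both genuinely occur; passing to $K$ would conflate a $\bZ/2\bZ$ that we want to keep visible, which is exactly the $\delta_i$'s of~\eqref{comp group explicit}.

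Finally I would track the $A_K(x)$-action. By~\eqref{compgp}, $A_K(x)=Z_{L_K}(x)/Z_{L_K}(x)^0$, and by the computation in \S\ref{nilp comp} this is $S(\prod_i O(m_i))/S(\prod_i O(m_i))^0\cong(\bZ/2\bZ)^{s-1}$, generated by the $\delta_i$. The action of $Z_{L_K}(x)$ on $V$ preserves each middle weight space and acts there through the orthogonal group of $(\,,\,)_i$; the induced action on the set of maximal isotropic subspaces of $W_k$ is via $O(W_k)/SO(W_k)=\bZ/2\bZ$, and it is nontrivial exactly when the determinant of the block transformation on $W_k$ is $-1$. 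Matching generators, $\delta_r$ acts nontrivially on $\oh^{top}(\on{OGr}(\sigma_k/2,W_k))$ iff $r+1$ is the index $j_{k+1}$ of a ``block boundary'' inside $\Omega_\cO$ — this is the content of the definition~\eqref{char-bi} of $\Pi_\cO$, and it is how one will later read off that the permutation characters of $A_K(x)$ on irreducible components yield exactly $\Pi_\cO$. The main obstacle, I expect, is the combinatorial verification that the proposed sign-pattern for $\{(a_i,b_i)\}$ both lands in the correct $K$-conjugacy class (so that $\cO_B=\cO_\mu$ with the required $\theta$-stable Borel) \emph{and} makes the Springer fiber split as a clean tensor product over blocks rather than something with extra components glued across block boundaries; getting the parities in Proposition~\ref{prop-Richardson}(ii) to line up with the block decomposition $\Omega_\mu$ is the delicate bookkeeping, and I would isolate it as a lemma about signed Young diagrams before doing any geometry.
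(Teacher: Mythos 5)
Your outline has the right target but two genuine gaps, one in the construction of $B$ and one in the main computation. First, the Borel. Your prescription — that inside each block exactly half the rows of each length $2\mu_a+1$ begin with $+$ and half with $-$ — does not fit the situation: by Proposition~\ref{prop-Richardson}(ii) the orbit $\cO_\mu$ has, for each part length, \emph{all} rows beginning with the single sign $\epsilon_a$, and you give no argument that your "balanced" choice of $\{(a_i,b_i)\}$ has $\cO_B=\cO_\mu$ (nor is it clear it does). The paper's Borel is built quite differently, by recursion: one takes the maximal initial run of parts with a common sign $\epsilon$, sets the first $m_\mu$ entries (see~\eqref{the number mmu}) equal to $\epsilon$, passes to the quotient $V'=V_m^\perp/V_m$ with $V_m=\ker x\cap\on{Im}x^{2\mu_k}$ when $\mu_k\geq 1$ (or an isotropic choice in $\ker x$ when $\mu_k=0$), obtains a smaller pair with orbit $\cO_{\mu'}$ as in~\eqref{mmu'}, and recurses; the identity $\cO_B=\cO_\mu$ then follows by induction from \cite[Proposition 7.1]{T}. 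Establishing $\cO_B=\cO_\mu$ for \emph{some} explicit pattern is part of the content of the proposition, not a routine check.

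Second, and more seriously, the statement that the component set of $\tilde\pi_B^{-1}(x)$ factors as a product of exactly one $\on{OGr}(\sigma_k/2,W_k)$ per block of $\Omega_\mu$, with no extra components glued across blocks, is precisely what you defer as "delicate bookkeeping" to a future lemma — so the proposal does not actually prove the proposition. Appealing to the method of \cite{BZ,T2} is not sufficient here, because in type BD the fibers are genuinely reducible and one must control both how many components occur and how they are grouped into blocks (why a block spanning several part-lengths $j_k,\dots,j_{k+1}-1$ contributes a \emph{single} factor $\on{OGr}(\sigma_k/2,W_k)$, and why the defining conditions of $\Omega_\mu$ — parity of $\sum_{a\geq j}m_a$, and $\mu_{j-1}\geq\mu_j+2$ or $\epsilon_{j-1}=\epsilon_j$ — are exactly the right ones). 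The paper handles this by induction: the projection $p:\pi^{-1}(x)\to\on{IGr}(m,\ker x)$ has image a single point when $\mu_k\geq1$ (forced by the dimension count~\eqref{dim of sf}) and image $\on{OGr}([m_k/2],m_k)$ when $\mu_k=0$, with fiber $\cF(V_m)\times(\tilde\pi'_{B'})^{-1}(x')$ in both cases; simple connectivity of the orthogonal Grassmannian gives the tensor factorization of $\oh^{top}$, and the combinatorial identity~\eqref{omegas}, namely $\Omega_\cO=\Omega_{\cO'}\sqcup\{k\}$ exactly when $\mu_k=0$ and $m_k$ is even, is what matches each new factor to the block decomposition. None of these ingredients — the dimension estimate ruling out extra components, the quotient construction driving the induction, or the $\Omega$-bookkeeping — appears in your outline; likewise your description of the $A_K(x)$-action restates~\eqref{char-bi} rather than deriving it from the geometry. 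To complete the argument you would essentially have to supply the paper's induction (or an equivalent mechanism) rather than a standalone combinatorial lemma about sign patterns.
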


The group $A_K(x)$ from~\eqref{comp group explicit} acts in the natural way on  $\oh^{top}(\on{IGr}(\sigma_k/2,W_k))=\bC\oplus\bC$. The action of $A_K(x)$ on the set of irreducible (or equivalently, connected) components of $\tilde\pi_B^{-1}(x)$ is transitive when $N$ is odd but has two orbits when $N$ is even. In the $N$ even case we only consider one of the orbits. Taking into account the discussion from~\S\ref{sec-nilpotent support cs} and the explicit description of the action of $A_K(x)$ on $\oh^{top}(\on{IGr}(\sigma_k/2,W_k))$ we conclude that the IC sheaves in Theorem~\ref{nil thm-2} are nilpotent support character sheaves.

\begin{proof}[Proof of Proposition~\ref{sfiber-2}]  
Recall $n=[N/2]$ and the $\tilde K$-orbits of $\theta$-stable Borel subgroups are parametrized by ordered sequences $a_1,\ldots,a_{n}$, where $a_i\in\{0,1\}$ and $\sum_{1\leq i\leq n}a_i=[q/2]$. We define the special $\theta$-stable Borel by induction as follows. Assume that $1\leq k\leq s$ is such that $\epsilon_1=\ldots=\epsilon_k=\epsilon$ and $\epsilon_k\neq \epsilon_{k+1}$ (if $k<s$). Note that if $\mu_k=0$, then $k=s$. Let 
\beq
\label{the number mmu}
m_\mu= 
\sum_{a=1}^km_a \text{ (resp. $\sum_{a=1}^{k-1}m_a+[\frac{m_k}{2}]$)}\text{ if  $\mu_k\neq 0$ (resp. $\mu_k=0$)}.
\eeq
We define
\beqn
a_1=\ldots=a_{m_\mu}=\epsilon.
\eeqn
We write $m=m_\mu$ and without loss of generality we assume that $\epsilon=0$. Let 
\beqn
\text{$V_m=\ker x\cap\on{Im}x^{2\mu_k}$ if $\mu_k\geq 1$,} \text{ and $V_m\in\on{IGr}(m,\on{ker}x)$ if $\mu_k=0$.}
\eeqn
We note that $V_m\subset V^\epsilon$ (here $V^0=V^+$ and $V^1=V^-$). 
 Let $V'=(V_m)^\p/V_m$. Then $(,)$ induces a non-degenerate bilinear form $(,)'$ on $V'$. Moreover, $\theta$ induces an involution $\theta'$ on $SO_{V',(,)'}$ and we obtain a symmetric pair $(G',K')=(SO_{N-2m}, S(O_{p-2m}\times O_q))$.

Consider the map $x':V_m^\p/V_m\to V_m^\p/V_m$ induced by $x\in\cO_\mu$. Then $x'\in\cO'=\cO_{\mu'}$ where 
\beq\label{mmu'}
\mu'=
\begin{cases}
(2\mu_1-1)_{\epsilon+1}^{m_1}(2\mu_2-1)_{\epsilon+1}^{m_2}\cdots(2\mu_k-1)^{m_k}_{\epsilon+1}(2\mu_{k+1}+1)^{m_{k+1}}_{\epsilon+1}\cdots &\text{ if $\mu_k\geq 1$}
\\
(2\mu_1-1)_{\epsilon+1}^{m_1}(2\mu_2-1)_{\epsilon+1}^{m_2}\cdots(2\mu_{k-1}-1)^{m_{k-1}}_{\epsilon+1}1^{m_k-2[m_k/2]}_{\epsilon}&\text{ if $\mu_k=0$}.
\end{cases}
\eeq
Note that in passing from $\mu$ to $\mu'$ only the first $\sum_{i=1}^km_i$ terms in the partition change. 
Let $m_\mu'$ be defined  for $\mu'$ as in~\eqref{the number mmu}. We define 
\beqn
a_{m_\mu+1}=\ldots=a_{m_\mu+m_{\mu'}}\equiv \epsilon+1.
\eeqn
Continuing in this manner we produce a  sequence $a_1,\ldots,a_n$. 

 Let $B'$ be a   $\theta$-stable Borel subgroup defined by the sequence $a_{m_\mu+1},\ldots,a_n$ for the symmetric pair $(G',K')=(SO_{N-2m}, S(O_{p-2m}\times O_q))$.
Let us write $\tilde\pi_B:\tilde K\times^{B_{K}}\Ln_1\to\bar\cO_{B}$ and $\tilde\pi'_{B'}:\tilde K'\times^{B'_{K'}}\Ln'_1\to\bar\cO_{B'}$. By the induction hypothesis, we can assume that $\cO_{B'}=\cO_{\mu'}$, where $\mu'$ is as in~\eqref{mmu'}. Then $\cO_{B}=\cO_\mu$ by \cite[Proposition 7.1]{T}. We write $B$ for the $\theta$-stable Borel subgroup defined by the sequence $a_1,\ldots,a_n$. 

We now determine the components of $\tilde\pi_B^{-1}(x)$ by induction. 
We first show that 
\beq\label{omegas}
\Omega_{\cO}=\Omega_{\cO'}\sqcup\{k\}\text{ (resp. $\Omega_{\cO'}$)}\text{ if $\mu_k=0$ and $m_k$ is even (resp. otherwise)}.
\eeq
Thus
\beq
\label{numbers1}
l_{\cO'}=l_{\cO}-1\text{ (resp. $l_{\cO'}$)  if $\mu_k=0$ and $m_k$ is even (resp. otherwise)}.
\eeq
Since $\epsilon_{k-1}=\epsilon_k$, whether $k\in \Omega_\cO$ depends only on the fact whether $\sum_{a= k}^sm_a$ is even.
Consider first the case $\mu_k=0$. Then $k=s$ and, if $m_k$ is even (resp. odd) then $k\in\Omega_\cO$ (resp. $k\notin\Omega_\cO$) and $k\notin\Omega_{\cO'}$ . Thus we are reduced to the case $\mu_k\geq 1$ and
it suffices to check that $k+1\in\Omega_{\cO}$ if and only if $k+1\in\Omega_{\cO}'$. We have the following cases
\begin{enumerate}
\item $\sum_{a= k+1}^sm_a$ is odd, then $k+1\notin\Omega_{\cO}$ and $k+1\notin\Omega_{\cO'}$;
\item $\sum_{a= k+1}^sm_a$  is even and $\mu_k\geq\mu_{k+1}+2$. Thus $k+1\in\Omega_{\cO}$ and $k+1\in\Omega_{\cO'}$ because
at step $k+1$ there is no sign change.
\item $\sum_{a= k+1}^sm_a$  is even and $\mu_k=\mu_{k+1}+1$. Then, as $\epsilon\neq\epsilon_{k+1}$ we see that  $k+1\notin\Omega_{\cO}$. Also, $k+1\notin\Omega_{\cO'}$ as in this case  $\Omega_{\cO'}\subset \{1,\dots, s\}-\{k+1\}$. 
\end{enumerate}
This proves~\eqref{omegas} and thus~\eqref{numbers1}.

Let us write $l=l_\cO$, $l'=l_{\cO'}$ and let 
\beqn
W_b'=\bigoplus_{a=j_b}^{j_{b+1}-1}V(2\mu_a')_0^{\oplus m_a}, \ 1\leq b\leq l',
\eeqn
where, we recall,  $\mu_a'=\mu_a-1$ for $1\leq a\leq k$. 
It is now easy to verify that
\beq\label{dim of sf}
\dim\tilde\pi_B^{-1}({x})=\dim{(\tilde\pi'_{B'})}^{-1}({x'})+\frac{m^2-m}{2}+\left\{\begin{array}{ll}0&\text{ if }\mu_k\geq 1,\\
\frac{m_k(m_k-2)}{8}&\text{ if }\mu_k=0\text{ and }m_k\text{ even},\\
\frac{m_k^2-1}{8}&\text{ if }\mu_k=0\text{ and }m_k\text{ odd}.\end{array}\right.
\eeq
Note that if $\cF=(U_i)\in\tilde\pi_B^{-1}(x)$, then $U_m\subset \ker x$ as $U_m\subset V^+$. Consider the map
\beqn
p:\tilde\pi_B^{-1}(x)\to \on{IGr}(m,\on{ker}x),\ \cF=(U_i)\mapsto U_m.
\eeqn
Suppose that $\mu_k\geq 1$.  By construction  $p^{-1}(V_m)\cong \cF(V_m)\times {(\tilde\pi'_{B'})}^{-1}(x')$, where $\cF(V_m)$ is the complete flag variety of $V_m=\ker x\cap\on{Im}x^{2\mu_k}$. By~\eqref{dim of sf}, we then conclude that $p^{-1}(V_m)=\tilde\pi_B^{-1}(x)$ and hence $\tilde\pi_B^{-1}(x)\cong \cF(V_m)\times {(\tilde\pi'_{B'})}^{-1}(x')$. Therefore
 \beqn
\oh^{top}(
\tilde\pi_B^{-1}(x)) \cong  \oh^{top}((\tilde\pi'_{B'})^{-1}(x'))\,.
\eeqn
Let us now consider the case when $\mu_k=0$. We first note that 
\beqn
\on{IGr}(m,\ker x)\cong\on{IGr}([m_k/2],V(0)_0^{\oplus m_k})\,,
\eeqn
where $\on{dim}V(0)_0^{\oplus m_k}=m_k$ and $(\,,\,)|_{V(0)_0^{\oplus m_k}}$ is non-degenerate. 
 For each $U_m\in\on{IGr}(m,\ker x)$, the fiber $p^{-1}(U_m)\cong\cF(U_m)\times(\tilde\pi'_{B'})^{-1}(x')$.  Therefore we conclude that 
 \beqn
\oh^{top}(\tilde\pi_B^{-1}(x)) \cong \oh^{top}(\on{IGr}([m_k/2],m_k))\otimes \oh^{top}((\tilde\pi'_{B'})^{-1}(x'))\,.
\eeqn
We have 
\beqn
\oh^{top}(\on{IGr}([m_k/2],m_k))=\bC\text{ (resp. $\bC\oplus\bC$) if $m_k$ is odd (resp. even)}.
\eeqn
We note that $\on{IGr}(\sigma_b/2,W_b)\cong\on{IGr}(\sigma_{b'}/2,W_b')$, $1\leq b\leq l'$ and we can assume by induction that the theorem holds for  $\oh^{top}((\tilde\pi'_{B'})^{-1}(x'))$. The discussion above implies that $\oh^{top}(\tilde\pi_B^{-1}(x)) \cong  \oh^{top}((\tilde\pi'_{B'})^{-1}(x'))$ except when $\mu_k=0$ and $m_k$ is even. In that case $\sigma_l=m_k$ and $W_l=V(0)_0^{\oplus m_k}$, and thus
 \beqn
\oh^{top}(\tilde\pi_B^{-1}(x)) \cong \oh^{top}(\on{IGr}(\sigma_l/2,W_l))\otimes \oh^{top}((\tilde\pi'_{B'})^{-1}(x'))\,.
\eeqn
This concludes the induction step.
\end{proof}

\section{Nearby cycle sheaves}\label{sec-full support}

In this section, we determine the Fourier transform $\fF(P_\chi)$ of the nearby cycle sheaves $P_\chi$ (see~\S\ref{csfs}) explicitly  for the symmetric pairs $(G,K)$ in \S\ref{ssec-classical pairs}. Recall that $r=\on{dim}\fa$.

\subsection{The group $I$ and the Weyl group action}\label{sec-action}

In this subsection we study the groups $I=Z_K(\fa)/Z_K(\fa)^0$ and the action of $W_\fa$ on the set $\hat I$ of its irreducible characters.  We will make use of the discussion in \S\ref{ssec-classical pairs} and \S\ref{ssec-dual side}.
 Note that $I^0=Z_{K^0}(\fa)/Z_{K^0}(\fa)^0$ is generated by $\check{\alpha}(-1)$, $\alpha\in\Phi^{\rR}$, see, for example~\cite[Theorem 7.55]{Ka}.
\begin{lemma}
\label{lemma-orbits on I}
 We have that
\begin{enumerate}
\item
$I=1,\text{ if }(G,K)$ is of type \rm{AIII}, \rm{CII} or \rm{DIII}.
\item $I=(\bZ/2\bZ)^{r},\ I^0=(\bZ/2\bZ)^{r-1}$, if $(G,K)$ is of type {\rm BDI}, and $I=I^0=(\bZ/2\bZ)^r,$ if $(G,K)$ is of type {\rm CI}. Moreover, the action of $W_\fa$ on $\hat I$ has $r+1$ orbits.
\end{enumerate}
\end{lemma}

\begin{proof}  
The claim (1) can be checked easily. Note that $I/I^0=K/K^0$. Thus if $G$ is simply connected, then $K=K^0$ and  $I=I^0$.  The claim (2) is readily checked using the following description of $I$ and $I^0$. 

Suppose that $(G,K)$ is of type BDI. Then $K/K^0=\bZ/2\bZ$. Let $\alpha_i=\epsilon_i-\epsilon_{i+1}$, $1\leq i\leq r-1$, $\alpha_r=\epsilon_r$ ($N$ odd), $\alpha_r=\epsilon_{r-1}+\epsilon_r$ ($N$ even), be a set of simple real roots. Then  $I^0=\langle\gamma_i=\check{\alpha}_i(-1), 1\leq i\leq r-1\rangle$ and  $I=\langle\gamma_i=\check{\alpha}_i(-1), 1\leq i\leq r-1,\gamma_r=\epsilon_r(-1)\rangle$.

Suppose that $(G,K)$ is of type {\rm CI}. Let  $\alpha_i=\epsilon_{i}-\epsilon_{i+1}$, $1\leq i\leq r-1$,\,$\alpha_{r}=2\epsilon_r$, be a set of simple real roots. Then $I=\langle\gamma_i=\check{\alpha}_i(-1),\,1\leq i\leq r\rangle$. \end{proof}

In the remainder of this subsection we assume that $(G,K)$ is of type BDI or CI. 
We fix a set of representatives for the $W_\fa$-orbits in $\hat I$ as follows,
\beq\label{chars}
\chi_0=1,\ \text{$\chi_m(\gamma_i)=1$ if $i\neq m$ and $\chi_m(\gamma_m)=-1$, $1\leq m\leq r$},
\eeq
where $\gamma_i\in I$ are defined in the proof of Lemma~\ref{lemma-orbits on I}. We describe the subgroups $W_{\fa,\chi_m}$ and $W_{\fa,\chi_m}^0$.

Suppose that $(G,K)$ is of type BI. Then $G_\rs$ is semisimple and adjoint. One checks that
\beqn
\begin{gathered}
\{\alpha\in\Phi^{\rR}\,|\,\chi_m(\check{\alpha}(-1))=1\}=\{\pm(\epsilon_i\pm\epsilon_j),1\leq i<j\leq m,\, m<i<j\leq r;\ \pm\epsilon_i,1\leq i\leq r\}.
\end{gathered}
\eeqn
Thus (see~\eqref{adjoint case})
\beqn
\begin{gathered}
W_{\fa,\chi_m}=W_{\fa,\chi_m}^0
=\langle s_1,\ldots,s_{m-1},\tau_m\rangle\times\langle s_{m+1},\ldots,s_r\rangle\cong W_m\times W_{r-m},
\end{gathered}
\eeqn 
where $s_i=s_{\epsilon_i-\epsilon_{i-1}}$, $1\leq i\leq m-1$, $s_r=s_{\epsilon_r}$ and 
$
\tau_m=s_ms_{m+1}\ldots s_{r-1}s_rs_{r-1}\ldots s_m=s_{\epsilon_m}.
$

Suppose that $(G,K)$ is of type DI. Let $s_i=s_{\epsilon_i-\epsilon_{i-1}}$, $1\leq i\leq r-1$, $s_r=s_{\epsilon_{r-1}+\epsilon_{r}}$, and $\tau_{m}=(s_ms_{m+1}\ldots s_{r-2}s_rs_{r-1}\ldots s_{m+1}s_m)=s_{\epsilon_m}s_{\epsilon_n}$. Assume that $r=n$. Then 
\beq\label{stabilizer split D}
\begin{gathered}
{W}_{\fa,\chi_m}=\langle s_{m+1},\ldots,s_n\rangle\rtimes\langle s_1,\ldots,s_{m-1},\tau_{m}\rangle\cong W_{n-m}'\rtimes W_m,\ 1\leq m\leq n-1
\\
W_{\fa,\chi_m}^0=\langle s_1,\ldots,s_{m-1},\tau_ms_{m-1}\tau_m\rangle\times\langle s_{m+1},\ldots,s_n\rangle\cong W_m'\times W_{n-m}',\ 1\leq m\leq n-1,\\
W_{\fa,\chi_m}=W_{\fa,\chi_m}^0=W_n', \ m=0\text{ or }n.
\end{gathered}
\eeq
Assume that  $r< n$.  We have $G_\rs\cong SO(2r)$. Using~\eqref{reduction to Gs} and~\eqref{stabilizer split D}, we see that
\beqn
W^\rR_{\chi_m}=\langle s_1,\ldots,s_{m-1},\tau_m,s_{m+1},\ldots,s_r\rangle.
\eeqn
Write $(W^\rC)^\theta=\{1,t\}=\bZ/2\bZ$, where $t=s_{\epsilon_r}s_{\epsilon_n}$. Then 
$
W_\fa=\langle s_1,\ldots,s_{r-1},t\rangle=W_r.
$
Thus
\beqn
\begin{gathered}
W_{\fa,\chi_m}=W^\rR_{\chi_m}\rtimes (W^\rC)^\theta=\langle s_1,\ldots,s_{m-1},\tau_m,s_{m+1},\ldots,s_r, t\rangle\\
=\langle s_1,\ldots,s_{m-1},s_ms_{m+1}\cdots s_{r-2}s_{r-1}ts_{r-1}\cdots s_{m+1}s_m\rangle\times\langle s_{m+1},\ldots,s_{r-1},t\rangle\cong W_m\times W_{r-m}.
\end{gathered}
\eeqn
On the other hand, one checks that 
$
\{\alpha\in\Phi^{\rR}\,|\,\chi_m(\check{\alpha}(-1))=1\}=\{\pm(\epsilon_i\pm\epsilon_j),1\leq i<j\leq m,\, m+1\leq i<j\leq r\}.
$
Thus 
\beqn
W_{\fa,\chi_m}^0=\langle s_{1},\ldots, s_{m-1},s_{m+1},\ldots,s_{r-1},\, s_{\epsilon_i}, i=1,\ldots,r\rangle=W_{\fa,\chi_m}\cong W_m\times W_{r-m}.
\eeqn

Suppose that $(G,K)$ is of type CI. Let $s_i=s_{\epsilon_i-\epsilon_{i+1}}$, $1\leq i\leq r-1$, $s_r=s_{\epsilon_r}$, and $\tau_m=s_ms_{m+1}\ldots s_{r-1}s_rs_{r-1}\ldots s_m=s_{\epsilon_m}$ . 
We have
\beq\label{eqn-stabiliser-type C}
\begin{gathered}
W_{\fa,\chi_m}=\langle s_1,\ldots,s_{m-1},\tau_m\rangle\times\langle s_{m+1},\ldots,s_r\rangle\cong W_m\times W_{r-m},
\\
W_{\fa,\chi_m}^0=\langle s_1,\ldots,s_{m-1},\tau_ms_{m-1}\tau_m=s_{\epsilon_{m-1}+\epsilon_m}\rangle\times\langle s_{m+1},\ldots,s_r\rangle\cong W_m'\times W_{r-m}.
\end{gathered}
\eeq

\subsection{Nearby cycle sheaves and full support character sheaves} Using the discussions in~\S\ref{ssec-classical pairs},~\S\ref{csfs},  and~\S\ref{sec-action}, we explicitly describe the Fourier transforms of the nearby cycle sheaves. 

Let $\chi_0$ denote the trivial character of $I$. In types BD\rm{I} and C\rm{I}, let $\chi_m\in \hat I$, $0\leq m\leq r$, be the characters defined in~\eqref{chars}. Let
 \beq\label{tildebs}
 \widetilde{B}_{W_k}=(\bZ/2\bZ)^{k}\rtimes B_{W_k},\ \widetilde{B}_{W_k'}=(\bZ/2\bZ)^{k}\rtimes B_{W_k'}.
 \eeq
Let $\cH_{W_r,c_0,c_1}$ (resp. $\cH_{W_n',-1}$) be the Hecke algebra defined in~\S\ref{sec-Hecke}. We write $\cH_{W_r,c_0,c_1}$ also for the representation of $B_{W_r}$ (resp. $B_{W_n'}$)  arising from  the regular representation of $\cH_{W_r,c_0,c_1}$ (resp. $\cH_{W_n',-1}$) via the natural surjective map $\bC[B_{W_r}]\to\cH_{W_r,c_0,c_1}$ (resp. $\bC[B_{W_n'}]\to\cH_{W_n',-1}$). We set $\cH_{W_0,c_0,c_1}=\cH_{W_0',-1}=\bC$.

\begin{prop}\label{thm-nearby cycles}
{\rm (i)} Suppose that $(G,K)$ is of type {\rm{A{III}}, \rm{CII} or \rm{DIII}}.
We have 
\beqn
\fF(P_{\chi_0})=\on{IC}(\Lg_1^{rs},\cH_{W_r,1,-1}).
\eeqn
\noindent{\rm (ii)} Suppose that $(G,K)$ is of type {\rm BCDI}. We have
\beqn
\fF (P_{\chi_m})=\on{IC}(\Lg_1^{rs},\cM_{\chi_m}\otimes\bC_\iota),
\eeqn
where $\cM_{\chi_m}$ is given by the following representation $M_{\chi_m}$ of  $\pi_1^K(\Lg_1^{rs})$
\bern
&&{M_{\chi_m}\cong\bC[\widetilde{B}_{W_r}]\otimes_{\bC[\widetilde{B}_{W_r}^{\chi_m}]}(\bC_{\chi_m}\otimes\cH_{W_m,-1,-1}\otimes\cH_{W_{r-m},-1,-1})}\quad\text{\rm{(BI)}},\\
&&M_{\chi_m}\cong\bC[\widetilde{B}_{W_n}]\otimes_{\bC[\widetilde{B}_{W_n}^{\chi_m}]}(\bC_{\chi_m}\otimes\cH_{W_m,-1,1}\otimes\cH_{W_{n-m},-1,-1})\quad\text{\rm{(CI)}},\\
&&{M_{\chi_m}\cong\bC[\widetilde{B}_{W_n'}]\otimes_{\bC[\widetilde{B}_{W_n'}^{\chi_m,0}]}(\bC_{\chi_m}\otimes\cH_{W_m',-1}\otimes\cH_{W_{n-m}',-1})}\quad\text{\rm{(DI)} $p=q$},\\
&&{M_{\chi_m}\cong
\bC[\widetilde{B}_{W_r}]\otimes_{\bC[\widetilde{B}_{W_r}^{\chi_m}]}(\bC_{\chi_m}\otimes\cH_{W_m,-1,1}\otimes\cH_{W_{r-m},-1,1})}\quad\text{\rm{(DI)} $p\neq q$},
\eern
and $\iota=1$ unless $G=SO_{2n+1}$ in which case $\iota$ is the nontrivial character of $K/K^0=\bZ/2\bZ$. 
\end{prop}

In the case of type \rm{CI}, the  isomorphism in the above proposition is proved in the same way as in~\cite[\S7.6]{GVX}, noting that $W_{\fa,\chi_m}$ is a Coxeter group. In particular, the parameter for the simple reflection $\tau_m$ of $W_{\fa,\chi_m}$ in~\eqref{eqn-stabiliser-type C} is $1$ since $\chi(\check\alpha(-1))=-1$ for $\alpha=\epsilon_k$.

In the case of split type DI, recall from~\eqref{stabilizer split D} that when $1\leq m\leq n-1$, $W_{\fa,\chi_m}=W_{\fa,\chi_m}^0\rtimes\langle\tau_m\rangle$ and $W_{\fa,\chi_m}^0=\langle s_1,\ldots,s_{m-1},\tau_ms_{m-1}\tau_m\rangle\times\langle s_{m+1},\ldots,s_n\rangle$, where $\tau_m^2=1$, and $\tau_m$ acts on the generators of $W_{\fa,\chi_m}^0$ as follows: $\tau_m$ fixes $s_i$ for $i< m-1$ and $m+1\leq i<n-1$, $\tau_m$ interchanges $\tau_ms_{m-1}\tau_m$ (resp. $s_{n-1}$) with $s_{m-1}$ (resp. $s_n$).  Thus when $1\leq m\leq n-1$, for each irreducible representation $\rho$ of $B_{W_n'}^{\chi_m,0}$, the $\bC[B_{W_n'}^{\chi_m}]\otimes_{\bC[B_{W_n'}^{\chi_m,0}]}\rho$ decomposes into two non-isomorphic irreducible representations of $B_{W_n'}^{\chi_m}$.

Recall the set $\Theta_{(G,K)}
$ defined in~\eqref{loc-full}. 
Applying  Proposition~\ref{thm-nearby cycles}, the discussions in~\S\ref{csfs} and the above discussion, we conclude that the sheaves in the following corollary are full support character sheaves. As before, the fact that these are all of them will follow once we construct all character sheaves. 

\begin{corollary}\label{coro-full}
For the symmetric pairs considered in this paper all full support character sheaves arise from the nearby cycle constructions, i.e.,
$$\left\{\on{IC}(\Lg_1^{rs},\cL_\rho)\,|\,\rho\in\Theta_{(G,K)}\right\}=\on{Char}_K^{\mathrm{f}}(\Lg_1)\,.$$ Moreover, we have
\bern
&&\Theta_{(G,K)}=\{V_\tau\mid\tau\in\on{Irr}\cH_{W_r,1,-1}\}\text{ {\rm (AIII) (CII) (DIII)}}\\
&&\Theta_{(SO_{2n+1},S(O_p\times O_q))}=\{V_{\rho_1,\rho_2,\chi_k}\mid\rho_1\in\on{Irr}\cH_{W_k,-1,-1},\rho_2\in\on{Irr}\cH_{W_{r-k},-1,-1},k\in[0,r]\}\\
&&\Theta_{(Sp_{2n},GL_n)}=\{V_{\rho_1,\rho_2,\chi_k}\mid\rho_1\in\on{Irr}\cH_{W_k,-1,1},\rho_2\in\on{Irr}\cH_{W_{n-k},-1,-1},k\in[0,n]\}\\
&&\Theta_{(SO_{2n},S(O_n\times O_n))}=\{V_{\rho_1,\rho_2,\chi_k}^\delta\mid\delta={\rm I,II},\,\rho_1\in\on{Irr}\cH_{W_k',-1},\rho_2\in\on{Irr}\cH_{W_{n-k}',-1},k\in[0,n]\}\\
&&\Theta_{(SO_{2n},S(O_p\times O_q))}=\{V_{\rho_1,\rho_2,\chi_k}\mid\rho_1\in\on{Irr}\cH_{W_k,-1,1},\rho_2\in\on{Irr}\cH_{W_{r-k},-1,1},k\in[0,r]\}\ (p\neq q).
\eern
\end{corollary}
Let us write
\ber\label{rep1}
&&\Theta_n^{B}=\Theta_{(SO_{2n+2t-1},S(O_{n+2t-1}\times O_{n}))},\, \Theta_n^{D}=\Theta_{(SO_{2n+2t},S(O_{n+2t}\times O_{n}))},\,\Theta_n^{C}=\Theta_{(Sp_{2n},GL_n)},\\\
&&\label{rep2}\Theta_n^{D,0}=\Theta_{(SO_{2n},S(O_{n}\times O_{n}))},
\eer
where $t>0$. 
It follows that 
\begin{eqnarray}
 \label{number of fullsupp}
&& |\Theta_{n}^B| =\sum_{k=0}^qd(k)d(n-k),
\ \ |\Theta_{n}^C| =\sum_{k=0}^nd(k)e(n-k),\\
 \label{number of fullsupp D}
&&|\Theta_{n}^{D}|=\sum_{k=0}^ne(k)e(n-k),
\ \ |\Theta_{n}^{D,0}|=\frac{1}{2}\sum_{k=0}^n e(k)e(n-k),
 \end{eqnarray}
where we have used~\eqref{Hecke-D} and 
$
d(k)=|\on{Irr}\cH_{W_k,-1,-1}|,\  e(k)=|\on{Irr}\cH_{W_k,-1,1}|$, see~\S\ref{sec-Hecke}.

 \section{Character sheaves}\label{sec-main theorems}
In this section we give a description of the set $\on{Char}_K(\Lg_1)$ of character sheaves  for the symmetric pairs $(G,K)$ in \S\ref{ssec-classical pairs}.  We use the notation from that section. 
\subsection{Supports of the character sheaves}
\label{sec-support} 
To describe the supports of character sheaves, we define a set $\underline{\cN_1^{\text{cs}}}$ of nilpotent orbits such that for $\cO\in\underline{\cN_1^{\text{cs}}}$ the corresponding $\widecheck\cO$ supports a character sheaf. Conversely, all supports of character sheaves are of this form.  The $\underline{\cN_1^{\text{cs}}}$ consists of the following orbits:
\begin{eqnarray*}
 \text{\rm{(AIII)}}&\ \cO_{k,\mu}=\cO_{1^k_+1^k_-\sqcup\mu}&\ \ 0\leq k\leq r,\ \mu\in \on{SYD}^0_{(GL_{n-2k},GL_{p-k}\times GL_{q-k})},\\
 \text{\rm{(BDI)}}&\ \cO_{m,k,\mu}=\cO_{1^m_+1^m_-2^k_+2^k_-\sqcup\mu}&\ \  m\equiv r\,\nmod 2\text{ if $N$ is even},\ 0\leq m+2k\leq r, 
 \\&&\mu\in \on{SYD}^0_{(SO_{N-2m-4k},S(O_{p-m-2k}\times O_{q-m-2k}))}, \\
  \text{\rm{(CI)}}&\ \cO_{m,k,\mu}=\cO_{1^m_+1^m_-2^k_+2^k_-\sqcup\mu}&\ \ 0\leq m+2k\leq n,
 \ \mu\in \on{SYD}^0_{(Sp_{2n-2m-4k},GL_{n-m-2k})} \\
  \text{\rm{(CII)}}&\ \cO_{k,\mu}=\cO_{1^{2k}_+1^{2k}_-\sqcup\mu}&\ \ 0\leq k\leq r,\ \mu\in\on{SYD}^0_{(Sp_{2n-4k},Sp_{2p-2k}\times Sp_{2q-2k})}\\
 \text{\rm{(DIII)}}&\ \cO_{k,\mu}=\cO_{1^{2k}_+1^{2k}_-\sqcup\mu}&\ \ 0\leq k\leq [n/2],\ \mu\in\on{SYD}^0_{(SO_{2n-4k},GL_{n-2k})}.
 \end{eqnarray*}
Here $\lambda\sqcup\mu$ denotes the signed Young diagram obtained by joining $\lambda$ and $\mu$ together, i.e., the rows of $\lambda\sqcup\mu$ are the rows of $\lambda$ and $\mu$ rearranged according to the lengths of the rows.  Note that $\underline{\cN_1^{0}}\subset\underline{\cN_1^{\text{cs}}}$.  When $G=K$, the set $\on{SYD}^{0}_{(G,K)}$ consists of the signed Young diagram which corresponds to the zero orbit.  Also note that when $(G,K)=(SO_{4k},S(O_{2k}\times O_{2k}))$, we have written $\cO_{0,k,\emptyset}$ for both $\cO_{0,k,\emptyset}^{\rm{I}}$ and $\cO_{0,k,\emptyset}^{\rm{II}}$.
 
 \begin{remark}
For the symmetric pair $(SL_N,SO_N)$ considered in~\cite{CVX},  
the $\widecheck\cO$'s that support character sheave are given by the nilpotent orbits $\cO_{2^k1^{N-2k}}$, where as usual, there are two orbits $\cO_{2^{N/2}}^{\rm I}$ and $\cO_{2^{N/2}}^{\rm II}$ when $N$ is even.
 \end{remark}

Using~\eqref{identification}, one readily checks that the equvariant fundamental groups $\pi_1^K(\widecheck\cO)$ are as follows
\begin{subequations}
\label{explicit fundamental}
 \begin{eqnarray}
 \label{eqpi1-o1}
\text{\rm{(AIII)}\,(C\rm{II})\,(D\rm{III})}\ &&\pi_1^K(\widecheck\cO_{k,\mu})=B_{W_k};
\\
   \label{eqpi1-o2}
 \text{(BD\rm{I})}\ &&\pi_1^K(\widecheck\cO_{m,k,\mu})=\begin{cases} \widetilde{B}_{W_m'}\times \widetilde{B}_{W_k}\text{ if }\mu=\emptyset\\
\widetilde{B}_{W_m}\times \widetilde{B}_{W_k}\times(\bZ/2\bZ)^{r_\mu}\text{ if $\mu\neq \emptyset$}\end{cases}
\\ \label{eqpi1-o3}
 \text{(C\rm{I})}\ &&\pi_1^K(\widecheck\cO_{m,k,\mu})=\widetilde{B}_{W_m}\times \widetilde{B}_{W_k}\times(\bZ/2\bZ)^{r_\mu};
 \end{eqnarray}
 \end{subequations}
 where $r_\mu$ are defined in~\eqref{component group-BD} and~\eqref{component group-C}. Here we use the convention that $B_{W_0}=B_{W_0'}=\widetilde{B}_{W_0}=\widetilde{B}_{W_0'}=\{1\}$.
 
To illustrate the idea, let us explain the calculation in the case of type BDI. Also, for later use, the argument below fixes the splitting of $\pi_1^K(\widecheck\cO_{m,k,\mu})$ into a product.  We assume that $\mu=(\mu_1)^{m_1}\cdots(\mu_s)^{m_s}1^{m_0}$, where  each $\mu_i$ is odd and $\mu_1>\cdots>\mu_s>1$. We then have 
\bern
&& G^{\phi}=\{(g_1,\ldots,g_s,g_0,h)\in \prod_{i=1}^sO_{m_i}\times O_{2m+m_0}\times Sp_{2k}\,|\,\det(g_1\cdots g_sg_0)=1\}
\\
&&K^{\phi}=\{(g_1,\ldots,g_s,h_1,h_2,h_0)\in \prod_{i=1}^sO_{m_i}\times O_m\times O_{m+m_0}\times GL_k\,|\,\det(g_1\cdots g_sh_1h_2)=1\}\,.
\eern
Assume that $\mu\neq\emptyset$, i.e., that $s$ and $m_0$ are not both zero. Then $Z_{K^\phi}(\fa^\phi)/Z_{K^\phi}(\fa^\phi)^0=I_1\times I_2\times I_3$, $I_1=(\bZ/2\bZ)^m$, $I_2=(\bZ/2\bZ)^k$, $I_3=(\bZ/2\bZ)^{s-\delta_{m_0,0}}=(\bZ/2\bZ)^{r_\mu}\cong S(\prod_{i=1}^sO_{m_i}\times O_{m_0})/S(\prod_{i=1}^sO_{m_i}\times O_{m_0})^0$ and $B_{W_{\fa^\phi}}=B_{W_m}\times B_{W_k}$. Moreover the action of $B_{W_m}\times B_{W_k}$ on $I_1\times I_2\times I_3$ factors through the action of $B_{W_m}$ on $I_1$ and the action of $B_{W_k}$ on $I_2$.

\subsection{Explicit description of character sheaves}
In this subsection we state our main theorems which give an explicit description of character sheaves. We begin by writing down representations of the fundamental groups in~\eqref{explicit fundamental}. We use the notation from~\S\ref{sec-Hecke}. Let  $\tau\in\cP(k)$. We continue to write $L_\tau$ for the $B_{W_k}$-representation obtained by  pulling back the  simple module $L_\tau$ of $\cH_{W_k,1,-1}$ (see~\eqref{heck rep-sn like})  via the surjective map $\bC[B_{W_k}]\to \cH_{W_k,1,-1}$. Recall also the sets $\Theta_n^{B,C,D}$ (resp. $\Theta_n^{D,0}$) of irreducible representations of $\widetilde B_{W_n}$ (resp. $\widetilde B_{W_n'}$) defined in~\eqref{rep1} (resp.~\eqref{rep2}).

{\rm(A\rm{III},  C\rm{II}, D\rm{III})} For each $\tau\in\cP(k)$, let $\cT_\tau$ denote the irreducible $K$-equivariant local system on $\widecheck\cO_{k,\mu}$ corresponding to the $\pi_1^K(\widecheck\cO_{k,\mu})$-representation where $B_{W_k}$ acts via $L_\tau$, i.e.
$
\cT_\tau=L_\tau.
$
Here $\cT_\tau=\bC$ is the trivial local system if $k=0$.

{\rm (C\rm{I})} For  $\rho\in\Theta_{m}^C$ and $\tau\in\cP(k)$, let  $\cT_{\rho,\tau}$ denote the irreducible $K$-equivariant local system on $\widecheck\cO_{m,k,\mu}$ corresponding to the representation of $\pi_1^k(\widecheck\cO_{m,k,\mu})$, where $\widetilde{B}_{W_m}$ acts via $\rho$, $\widetilde{B}_{W_k}$ acts via the $B_{W_k}$-representation $L_\tau$, and $(\bZ/2\bZ)^{r_\mu}$ acts trivially, i.e., 
$
\cT_{\rho,\tau} \ = \ L_\rho\boxtimes L_\tau \boxtimes 1.
$

{\rm (B\rm{I} (resp. DI))} Suppose that $\mu\neq \emptyset$. For each $\rho\in\Theta_{m}^B$ (resp. $\Theta_{m}^D$), $\tau\in\cP(k)$, and $\phi\in\Pi_{\cO_\mu}$ (see~\eqref{char-bi}), let $\calT_{\rho,\tau,\phi}$ denote the irreducible $K$-equivariant local system on $\widecheck\cO_{m,k,\mu}$ given by the representation of $\pi_1^K(\widecheck\cO_{m,k,\mu})$, where $\widetilde{B}_{W_m}$ acts via $\rho$, $\widetilde{B}_{W_k}$ acts via the $B_{W_k}$-representation $L_\tau$, and $(\bZ/2\bZ)^{r_\mu}$ acts via $\phi$, i.e.,
$
\cT_{\rho,\tau,\phi} \ = \ L_\rho\boxtimes L_\tau \boxtimes \phi.
$

Suppose that $\mu=\emptyset$. For each $\rho\in\Theta_{m}^{D,0}$ and $\tau\in\cP(k)$, let $\calT_{\rho,\tau}$ denote the irreducible $K$-equivariant local system on $\widecheck\cO_{m,k,\emptyset}$ corresponding to the representation $L_{\rho}\boxtimes L_\tau$ of $\pi_1^K(\widecheck\cO_{m,k,\emptyset})$, where $\widetilde{B}_{W_m'}$ acts via $\rho$ and $\widetilde{B}_{W_k}$ acts via the $B_{W_k}$-representation $L_\tau$, i.e.,
$
\calT_{\rho,\tau}=L_{\rho}\boxtimes L_\tau.
$

Note that $\cT_{\rho,\tau}=\bC$ is the trivial local system and $\cT_{\rho,\tau,\phi}=\cE_\phi$ if $m=k=0$.

 \begin{thm}\label{thm-type A}
 Suppose that $(G,K)$ is of type {\rm{AIII,\,CII}}, or {\rm DIII}. 
 We have
 \beqn
 \on{Char}_K(\Lg_1)=
 \left\{\on{IC}(\widecheck\cO_{k,\mu},\cT_{\tau})\,|\,\tau\in\cP(k)\right\}.
\eeqn
 \end{thm}

\begin{thm}\label{thm-type C}Suppose that $(G,K)$ is of type {\rm CI}. 
We have
 \beqn
 \begin{gathered}
\on{Char}_K(\Lg_1)=\left\{\on{IC}(\widecheck\cO_{m,k,\mu},\cT_{\rho,\tau})\,|\,\rho\in\Theta_{m}^C,\ \tau\in\cP(k)\right\}.
\end{gathered}
 \eeqn
 \end{thm}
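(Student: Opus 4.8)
The plan is to prove Theorem~\ref{thm-type C} by combining the constructions of the previous sections with a counting argument, following the overall strategy of~\S\ref{sec-general strat}. First I would show that all the sheaves $\on{IC}(\widecheck\cO_{m,k,\mu},\cT_{\rho,\tau})$ listed are genuinely character sheaves. By~\eqref{char-fullsupp} and Theorem~\ref{thm-nearby cycles}, the sheaves $\on{IC}(\Lg_1^{rs},\cL_\rho)$ for $\rho\in\Theta_{(Sp(2m),GL(m))}$ are character sheaves with full support on the rank-$m$ symplectic pair; these are in fact cuspidal by the discussion in~\S\ref{csfs}. One then parabolically induces from a $\theta$-stable Levi of the form $Sp(2m)\times GL(1)^{\times k}\times \big(\text{Levi for the }\mu\text{-part}\big)$, using the induction functor of~\S\ref{sec-induction}. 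Because Fourier transform commutes with induction~\eqref{commutativity} and induction sends semisimple complexes to semisimple complexes, the summands of the induced complex are character sheaves; one checks via the support formula~\eqref{nilpsupport} and the description of $\widecheck\cO$ in~\S\ref{b} that the relevant summand is supported on $\overline{\widecheck\cO_{m,k,\mu}}$, and via~\eqref{identification} together with~\eqref{eqpi1-o3} that the local system on the open stratum is exactly $\cT_{\rho,\tau}=L_\rho\boxtimes L_\tau\boxtimes 1$. This requires knowing that the $(\bZ/2\bZ)^{r_\mu}$ factor acts trivially, which should follow from the fact that the $\mu$-part of the inducing datum is the (shifted) constant sheaf on the nilpotent orbit, i.e.\ the $\on{IC}$ sheaves with nilpotent support from Theorem~\ref{nil thm-1} restricted to $Sp(2n)$-type blocks.

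Next I would establish that this list is \emph{complete}, i.e.\ that every character sheaf in $\on{Char}(\fg,K)$ appears. The key input here is that every character sheaf is supported on some $\overline{\widecheck\cO}$ with $\cO\in\underline{\cN_1^{\text{cs}}}$ (this is the content of~\S\ref{sec-support}), so it suffices to count, for each $\cO_{m,k,\mu}\in\underline{\cN_1^{\text{cs}}}$ of type C\,\rm{II}/C\,\rm{I} --- wait, here type \rm{C\,I} --- the number of $\pi_1^K(\widecheck\cO_{m,k,\mu})$-representations whose $\on{IC}$ extensions are character sheaves, and show this number equals $|\Theta_{(Sp(2m),GL(m))}|\cdot|\cP(k)|$ summed appropriately. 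The upper bound comes from the fact that Fourier transform is an equivalence $\fF\colon\cA(\fg,K)\xrightarrow{\sim}\Char(\fg,K)$, so $|\Char(\fg,K)|=|\cA(\fg,K)|=\sum_{\cO}|\widehat A_K(\cO)|$, which is a purely combinatorial quantity computable from the signed-Young-diagram classification and the component-group formula~\eqref{component group-C} in~\S\ref{nilp comp}. The lower bound comes from the explicit constructions above. Matching the two requires a combinatorial identity --- this is precisely where Appendix~\ref{combinatorics} (by Dennis Stanton) enters: one needs that the generating function $\prod_{s\geq1}\frac{1}{1-x^s}$ (for $|\cP(k)|$) times $\sum_m|\Theta_{(Sp(2m),GL(m))}|x^m$, when convolved over the $\widecheck\cO$-stratification of $\Lg_1$, reproduces $\sum|\widehat A_K(\cO_\lambda)|$ over all $\lambda$ of the appropriate type.

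I would organize the counting as follows: first record, using~\eqref{number of fullsupp C}, that $\sum_m|\Theta_{(Sp(2m),GL(m))}|x^m=\prod_{s\geq1}\big(\sum_k d(k)e(\text{---})\big)$, more precisely the coefficient extraction $\sum_{k=0}^m d(k)e(m-k)$ with generating function $\prod_{s\geq1}(1+x^{2s})(1+x^s)(1+x^{2s-1})(1+x^s)=\prod_{s\geq1}(1+x^s)^2\prod_{s\geq 1}(1+x^{2s})(1+x^{2s-1})$. Then sum over the decompositions $\lambda=1^m_+1^m_-\,2^k_+2^k_-\sqcup\mu$ with $\mu\in\on{SYD}^0$, weighting each by $|\cP(k)|$. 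The resulting generating-function identity --- that this equals the full count of $K$-orbits-with-local-systems on $\cN_1$ for $(Sp(2n),GL(n))$ --- is the main obstacle, and it is exactly the type of identity proved in Appendix~\ref{combinatorics}. Once the counts match, completeness follows formally: the constructed character sheaves are pairwise non-isomorphic (their supports and the representations $L_\rho\boxtimes L_\tau$ distinguish them), they are all character sheaves, and there are exactly the right number of them, hence they exhaust $\on{Char}(\fg,K)$.

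The step I expect to be genuinely hard is the combinatorial identity underlying the count --- verifying that the product over strata of $|\cP(k)|\cdot|\Theta_{(Sp(2m),GL(m))}|$ (themselves built from the $d(k)$, $e(k)$ of~\eqref{generating for Hecke}) reproduces $\sum_\lambda|\widehat A_K(\cO_\lambda)|$. This is not a routine calculation, which is why it is isolated in Stanton's appendix; everything else (the induction computation, the identification of local systems via~\eqref{identification} and~\eqref{eqpi1-o3}, the non-isomorphism checks) is essentially bookkeeping once the strategy of~\S\ref{sec-general strat} is in place. A secondary subtlety is making sure the parabolic induction actually produces \emph{every} $\cT_{\rho,\tau}$ and not just some of them with multiplicity --- here one uses that the cuspidal full-support sheaves on the rank-$m$ factor, together with the trivial-local-system sheaves on the $GL(1)$ and $\mu$ factors, generate (after induction and taking summands) a set of the correct cardinality, so no $\cT_{\rho,\tau}$ can be missed.
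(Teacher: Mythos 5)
Your overall strategy (parabolic induction of full-support character sheaves from $\theta$-stable Levis, then completeness by counting against $|\cA(\fg,K)|$ via generating functions) is the same as the paper's, but the construction step as you describe it has concrete problems. First, the inducing datum is wrong: the paper's Levi is $L_{m,k,\mu}\cong Sp(2m)\times GL(2k)\times GL(1)^{n-m-2k}$ with $L^\theta\cong GL(m)\times GL(k)\times GL(k)\times GL(1)^{n-m-2k}$ (see \S\ref{theta stable parabolics}), and the sheaf being induced is $\on{IC}(\Ll_1^{rs},\cL_\rho\boxtimes\cL_\tau)$, where the $\tau$-direction local system $\cL_\tau$ (coming from $\cH_{W_k,1,-1}$ via Theorem~\ref{thm-nearby cycles-1}) lives on the $(GL(2k),GL(k)\times GL(k))$ factor; this factor is what produces the multiplicity-two eigenvalues in $\widecheck\cO_{m,k,\mu}$. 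Your Levi $Sp(2m)\times GL(1)^{\times k}\times(\text{Levi for the $\mu$-part})$ cannot do this: $GL(1)$ factors contribute multiplicity-one eigenvalues (the $m$-direction), and there is no separate Levi factor "for the $\mu$-part" in the paper's construction — the orbit $\cO_\mu$ enters as the Richardson orbit attached to the flag encoded in the $GL(1)^{n-m-2k}$ directions of the parabolic, not as a nilpotent-support sheaf on a Levi factor. Second, identifying the summand precisely — that the local system on the open stratum is exactly $\cT_{\rho,\tau}=L_\rho\boxtimes L_\tau\boxtimes 1$, and in particular that $(\bZ/2\bZ)^{r_\mu}$ acts trivially — is not a consequence of the $\mu$-part being a constant sheaf; it rests on the equality $\pi_1^{P_K}(\Lp_1^r)=\pi_1^K(\widecheck\cO)$ of \eqref{equivpi1-p1}, which in turn requires the irreducibility of the fibers $\pi^{-1}(x)\cong\tilde\pi_{B'_\mu}^{-1}(x')$ (Proposition~\ref{sfiber-1}, Barchini--Zierau), together with the surjection $\Phi$ of \eqref{map-plfd}. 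This fiber-connectivity input is a genuine ingredient of the proof, not bookkeeping, and your proposal does not supply it.

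On the counting side, you correctly identify the needed identity but misplace where it is established: for $(Sp(2n),GL(n))$ the identity is \emph{not} one of Stanton's appendix formulas (those are needed for type BD\,\rm{I}); the paper proves it directly and elementarily, by computing $|\cA(\mathfrak{sp}(2n),GL(n))|$ as the coefficient of $x^n$ in $\prod_{s\geq1}(1+x^s)^3/(1-x^s)^2$ (using \eqref{component group-C} and \eqref{partition identity-1}), computing the number $b_C(n)$ of Richardson orbits as the coefficient of $x^n$ in $\prod_{s\geq1}(1+x^s)/(1-x^s)$, and then matching $\sum_m|\Theta_{(Sp(2m),GL(m))}|\sum_k\pa(k)\,b_C(n-m-2k)$ against \eqref{cardinality-type C} using \eqref{number of fullsupp C}. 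So the "genuinely hard" step you defer to the appendix is in fact a short computation that your proposal leaves undone, while the step you dismiss as bookkeeping (pinning down the induced local systems via the fiber analysis of \S\ref{ssec-induced IC}) is where the real geometric content lies.
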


 \begin{thm}\label{thm-type BD}
{\rm (i)} Suppose that $(G,K)$ is of type {\rm BI}. We have
 \beqn
 \begin{gathered}
\on{Char}_K(\Lg_1)=\left\{\on{IC}(\widecheck\cO_{m,k,\mu},\cT_{\rho,\tau,\phi})\,|\,\rho\in\Theta_{m}^B,\tau\in\cP(k),\ \phi\in\Pi_{\cO_\mu} \right\}.
 \end{gathered}
 \eeqn
{\rm (ii)}  Suppose that $(G,K)$ is of type {\rm DI}. Then
\begin{eqnarray*}
\on{Char}_K(\Lg_1)&=&\left\{\on{IC}(\widecheck\cO_{m,k,\mu},\cT_{\rho,\tau,\phi})\,|\,\mu\neq\emptyset,\,\rho\in\Theta_{m}^D,\,\,\omega=\mathrm{\rm{I}},\mathrm{\rm{II}},\,\tau\in\cP(k),\ \phi\in\Pi_{\cO_\mu} \right\}\\
&\bigcup&\left\{\on{IC}(\widecheck\cO_{m,k,\emptyset},\cT_{\rho,\tau})\,|\,m\geq 1,\,\rho\in\Theta_{m}^{D,0},\,\tau\in\cP(k)\right\}\ (\text{if $p=q$}\,)\\
&\bigcup&\left\{\on{IC}(\widecheck\cO_{0,n/2,\emptyset}^\omega,\cT_{\tau})\,|\,\omega=\mathrm{\rm{I}},\mathrm{\rm{II}},\,\tau\in\cP(n/2)\right\}\ (\text{if $p=q$ even}\,). 
\end{eqnarray*}
 \end{thm}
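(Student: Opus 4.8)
The proof of Theorem~\ref{thm-type BD} follows the general scheme set out in Section~\ref{sec-general strat} and parallels the proofs of Theorems~\ref{thm-type A} and~\ref{thm-type C} in Section~\ref{sec-proof}. The strategy has three parts. First, produce character sheaves: the $\on{IC}$-sheaves listed in the theorem are genuinely character sheaves. The full support ones ($m=k=0$, $\mu$ trivial) come from the nearby cycle construction of~\S\ref{csfs} via Theorem~\ref{thm-nearby cycles} and the local systems $\cM_{\chi_m}$; the nilpotent support ones ($m=k=0$, $\mu$ arbitrary) come from Theorem~\ref{nil thm-1} or~\ref{nil thm-2}; the general ones are obtained by parabolic induction (\S\ref{sec-induction}) from $\theta$-stable Levi subgroups of the form $GL(1)^m\times GL(2)^k\times L'$ whose $\Lg_1$-part supports the full-support sheaves on the $SO(2m+1)$ or $SO(2m)$ factor, the constant-sheaf cuspidal of $(GL(2),GL(1)\times GL(1))$ on each $GL(2)$ factor, and a nilpotent support sheaf on the remaining $SO(N-2m-4k)$ factor. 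One checks, using~\eqref{commutativity} and the fact that $\pi$ is proper so induction preserves semisimplicity, that each induced complex is a direct sum of $\on{IC}$-sheaves on the strata $\widecheck\cO_{m,k,\mu}$, and that the local systems appearing are exactly (the composition factors realized as) the $\cT_{\rho,\tau,\phi}$ resp.\ $\cT_{\rho^\omega,\tau,\phi}$; this uses the explicit description~\eqref{explicit fundamental} of $\pi_1^K(\widecheck\cO_{m,k,\mu})$ together with the splitting fixed after~\eqref{explicit fundamental}.

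Second, show the supports are exhausted: by~\eqref{nilpsupport} every character sheaf is supported on some $\overline{\widecheck\cO}$, and one must check that only the $\cO\in\underline{\cN_1^{\mathrm{cs}}}$ can occur. This is done as in \S\ref{sec-support}: for an orbit $\cO_\lambda$ not of the form $\cO_{m,k,\mu}$ in the Type BD\,\rm{I} list, one analyzes the symmetric pair $(G^\phi,K^\phi)$ attached to $\cO_\lambda$ and shows via Theorem~\ref{thm-nearby cycles} applied to $(G^\phi,K^\phi)$ (equivalently, by the microlocal/partial nearby cycle analysis of~\S\ref{central}) that no local system on $(\Lg_1^\phi)^{rs}$ extends to a character sheaf — concretely, the relevant $W^{\rR}$, $W^{\rC}$ data forces the orbit to have the shape dictated by the parity constraints in the definition of $\underline{\cN_1^{\mathrm{cs}}}$. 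The $\on{SYD}^0$ pieces inside $\mu$ are controlled by Proposition~\ref{prop-Richardson} and the nilpotent-support results of Section~\ref{sec-biorbital}.

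Third, and this is the crux, run the counting argument: having exhibited a list of character sheaves and shown every character sheaf lies on one of the strata $\widecheck\cO_{m,k,\mu}$, it remains to prove that on each such stratum we have found \emph{all} of them, i.e.\ that the $\cT$'s listed give all the local systems on $\widecheck\cO_{m,k,\mu}$ whose $\on{IC}$ is a character sheaf. Because Fourier transform is an equivalence, $|\on{Char}(\Lg,K)| = |\cA(\Lg,K)| = \sum_{\cO} |\widehat A_K(\cO)|$, a sum over all nilpotent $K$-orbits computed from the data of \S\ref{nilp comp}. One computes the generating function for this number in type $B_n$ and $D_n$ from the signed Young diagram classification, and independently computes the generating function for the size of the proposed list, using~\eqref{number of fullsupp B}, \eqref{number of fullsupp D}, the counts $d(k),e(k)$ of~\eqref{generating for Hecke}, \eqref{Hecke-D}, and the sizes $|\Pi_{\cO_\mu}| = 2^{l_{\cO_\mu}}$ resp.\ $2^{l_{\cO_\mu}-1}$ from~\eqref{char-bi} and $|\cP(k)|$ from~\S\ref{sec-Hecke}. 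The two generating functions must agree; since the proposed sheaves are distinct and are all character sheaves, equality of the two counts forces the list to be complete. The bulk of the work — and the main obstacle — is the combinatorial identity matching these two generating functions; this is exactly where the Appendix~\ref{combinatorics} by Dennis Stanton is invoked. One also has to be careful in the $D_n$ case with the $\omega = \mathrm{I},\mathrm{II}$ splitting coming from~\eqref{eqn-bwm0}–\eqref{mapzeta} and the extra orbits $\cO^{\rm I}_{0,n/2,\emptyset},\cO^{\rm II}_{0,n/2,\emptyset}$, and in the $SO(2n)$, $p=q$ case with the doubling of orbits with only even parts, so that the count is done consistently on both sides.
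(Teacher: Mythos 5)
Your overall architecture (exhibit character sheaves, then close the argument by matching counts against $|\cA(\Lg,K)|$, with the combinatorial identities of Appendix~\ref{combinatorics} doing the heavy lifting) is the paper's argument, and your "second" step is in fact unnecessary: once the listed sheaves are known to be pairwise non-isomorphic character sheaves and the totals agree, exhaustion of supports is a consequence, not an input. (The paper's count is run at fixed $p+q$, comparing $\sum_p A_{p,N-p}$ with $\sum_p A'_{p,N-p}$ and using $A'_{p,q}\le A_{p,q}$ termwise; this is a detail your sketch elides but it is the same mechanism.)

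The genuine gap is in your first step, the production of the sheaves $\on{IC}(\widecheck\cO_{m,k,\mu},\cT_{\rho,\tau,\phi})$. The Levi you propose, carrying simultaneously a full-support (cuspidal) sheaf on an $SO(2m+1)$ (resp.\ $SO(2m)$) factor and a nilpotent-support sheaf on an $SO(N-2m-4k)$ factor, does not exist: a Levi subgroup of $SO(N)$ has exactly one orthogonal factor, the rest being general linear. The two obvious repairs each lose half the data. If you keep the $SO(N-2m-4k)$ factor (Levi $GL(1)^m\times GL(2)^k\times SO(N-2m-4k)$), you cannot manufacture the parameters $\rho\in\Theta_{(SO(2m+1),S(O(m+1)\times O(m)))}$ by induction, precisely because by Meta Theorem~\ref{def of cuspidal}(3) these full-support sheaves on the split factor are cuspidal and must already live on the Levi. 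The paper therefore keeps the orthogonal factor $SO(2m+1)$ (resp.\ $SO(2m)$) and uses $L_{m,k,\mu}\cong SO(2m+1)\times GL(2k)\times GL(1)^{n-m-2k}$ as in \S\ref{theta stable parabolics}, inducing only the full-support sheaf $\cL_\rho\boxtimes\cL_\tau$; the nilpotent datum $\mu$ and the characters $\phi\in\Pi_{\cO_\mu}$ are \emph{not} induced from a Levi factor but come from the geometry of $\pi$: the parabolic is built from a sequence whose associated $\theta$-stable Borel $B'_\mu$ of the smaller pair has Richardson orbit $\cO_\mu$ and is the specific Borel of Proposition~\ref{sfiber-2}, the fibers of $\pi$ over $\widecheck\cO_{m,k,\mu}$ are identified with $\tilde\pi_{B'_\mu}^{-1}(x')$, and the $A_K(x')$-action on their components produces exactly the summands indexed by $\phi\in\Pi_{\cO_\mu}$ (with the index-two subtlety \eqref{equivpi1-p2}--\eqref{equivpi1-p3} and the map \eqref{mapzeta} governing the $\omega=\rm{I},\rm{II}$ splitting in type D). Without this analysis your assertion that "the local systems appearing are exactly the $\cT_{\rho,\tau,\phi}$" is unsupported, and since the counting argument needs at least $2^{l_{\cO_\mu}}$ (resp.\ $2^{l_{\cO_\mu}-1}$) distinct sheaves per stratum on the nose, the omission is not cosmetic: it is the content of \S\ref{theta stable parabolics}--\S\ref{ssec-induced IC} together with Proposition~\ref{sfiber-2}.
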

\begin{remark}
For the pair $(GL_{2n},Sp_{2n})$, the nilpotent $K$-orbits in $\cN_1$ are parametrized by partitions of $n$. Moreover, $A_K(x)=1$ for $x\in\cN_1$. The restricted root system is of type $A_{n-1}$ with each restricted root space of dimension 4. We have $W_\fa=S_n$ and $I=1$. Thus
\beqn
\on{Char}_K(\Lg_1)=\{\on{IC}(\Lg_1^{rs},\cL_\tau)\,|\,\tau\in \cP(n)\},
\eeqn
where $\cL_\tau$ is the irreducible $K$-equivariant local system on $\Lg_1^{rs}$ given by the irreducible representation of $\pi_1^K(\Lg_1^{rs})=B_{S_n}$, where $B_{S_n}$ acts through $S_n$ via the irreducible representation corresponding to $\tau\in\cP(n)$. This recovers the corresponding result in~\cite{G2, H, L}.
\end{remark} 
As mentioned earlier, Theorems~\ref{nil thm-1} and~\ref{nil thm-2}, and Corollary~\ref{coro-full} follow from the above theorems whose proofs are given in the next two sections. We also obtain the following corollary.

\begin{corollary} 
\label{cuspidal corollary}
 Theorem~\ref{def of cuspidal}  from the introduction holds.  
\end{corollary}

\begin{proof}
We first claim that all full support character sheaves for the pair $(GL_{2n},GL_n\times GL_n)$ can be obtained via parabolic induction
from the constant sheaf $\bC_{\Ll_1}$ for the $\theta$-stable Levi $L=GL_2^n$ and $L^\theta=(GL_1\times GL_1)^n$. This can be seen as follows.  Recall that $G=GL_V$ and $K=GL_{V^+}\times GL_{V^-}$, where $V^+=\on{span}\{e_1,\ldots,e_n\}$ and $V^-=\on{span}\{f_1,\ldots,f_n\}$. Consider the $\theta$-stable parabolic subgroup $P$ that stabilizes  the following flag
$
0\subset V_2\subset V_4\subset\cdots\subset V_{2n-2}\subset V_{2n}=V,\ V_{2m}=\on{span}\{e_1,\ldots,e_m\}\oplus \on{span}\{f_1,\ldots,f_m\}.
$
Let $L$ be the natural $\theta$-stable Levi subgroup. Then $(L,L^\theta)\cong(GL_2^n,\,(GL_1\times GL_1)^n)$. Consider the map
$
\pi:K\times^{P_K}\Lp_1\to\Lg_1.
$
One checks readily that 
\beqn
\pi_*\bC[-]=\bigoplus_{\tau\in\cP(n)}\on{IC}(\Lg_1^{rs},\cL_\tau)\oplus\cdots\,.
\eeqn
Since $\pi_*\bC[-]\cong\on{Ind}_{\Ll_1\subset\Lp_1}^{\Lg_1}\bC_{\Ll_1}$, the claim follows.

One then checks that for the symmetric pairs $(Sp_{4n},Sp_{2n}\times Sp_{2n})$ and\linebreak $(SO_{4n},GL_{2n})$, the full support character sheaves can be obtained via parabolic induction from the full support character sheaves for the $\theta$-stable Levi subgroup $L=GL_{2n}$ with $L^\theta=GL_n\times GL_n$.

To complete the proof we appeal to Theorems~\ref{thm-type A}, \ref{thm-type C}, and~\ref{thm-type BD}.
\end{proof}

\section{Proof of Theorems~\ref{thm-type A}-\ref{thm-type BD}}\label{sec-proof}

In this section we prove Theorems~\ref{thm-type A}, \ref{thm-type C}, and~\ref{thm-type BD} by producing new character sheaves via parabolic induction. Combining these character sheaves with  nilpotent support ones and full support ones already constructed we have then shown that the sheaves listed in Theorems~\ref{thm-type A}, \ref{thm-type C}, and~\ref{thm-type BD} are indeed character sheaves. To prove that we have all of them we show that the number of the sheaves we have constructed coincides with the number of irreducible $K$-equivariant local systems on $\cN_1$ (see~\eqref{twosides}).

Let $\cO_{m,k,\mu}\in\underline{\cN_1^{\text{cs}}}$, where we write $\cO_{0,k,\mu}=\cO_{k,\mu}$ in type AIII, CII, DIII. We begin by associating to each pair $(G,K)$ a family of $\theta$-stable parabolic subgroups $P_{m,k,\mu}$ together with their $\theta$-stable Levi subgroups $L_{m,k,\mu}$ such that
\beqn
K.(\Lp_{m,k,\mu})_1=\overline{\widecheck\cO}_{m,k,\mu},\ 
\eeqn
where $(\fp_{m,k,\mu})_1=\fp_{m,k,\mu}\cap\Lg_1$, $\fp_{m,k,\mu}=\on{Lie}P_{m,k,\mu}$. 
We then study 
 the parabolic induction of full support character sheaves in $\on{Char}_{L_{m,k,\mu}^\theta}^\rf((\Ll_{m,k,\mu})_1)$ (see~\S\ref{sec-induction}).

\subsection{A family of $\theta$-stable parabolic subgroups of $G$}\label{theta stable parabolics}

Let $\cO_{m,k,\mu}\in\underline{\cN_1^{\text{cs}}}$.    Consider an {\em ordered }sequence $a_1$, $\ldots,\ a_{n-m-2k}$, with $a_i\in\{0,1\}$, such that
\begin{eqnarray*}
\text{(A\rm{III} or C\rm{II})}
\ 
\sum {a_i}=q-k,\ \ 
\text{(BD\rm{I})}\  \sum {a_i}=[\frac{q-m-2k}{2}].
\end{eqnarray*}
The ordered sequence $(a_i)$ defines a $\tilde K'$-conjugacy class of  $\theta$-stable Borel subgroup(s) $B'_{(a_i)}$ for the symmetric pair $(G',K')$
\begin{eqnarray*}
\text{(A\rm{III})}\ (GL_{n-2k},GL_{p-k}\times GL_{q-k})
\ \text{(BD\rm{I})}\ (SO_{N-2a},S(O_{p-a}\times O_{q-a}))\\
\text{(C\rm{I})}\ (Sp_{2n-2m-4k},GL_{n-m-2k})\ 
\text{(C\rm{II})}\ (Sp_{2n-4k},Sp_{2p-2k}\times Sp_{2q-2k})\ \ \text{(D\rm{III})}\ (SO_{2n-4k},GL_{n-2k}).
\end{eqnarray*}
We can and will choose a sequence $(a_i)$ such that the corresponding Richardson orbit $\cO_{B'_{(a_i)}}$ is $\cO_\mu$.  Moreover, in type BD\rm{I},  we choose $(a_i)$ such that the  corresponding $\theta$-stable Borel subgroup $B'_{(a_i)}$ is as in Proposition~\ref{sfiber-2} for $\cO_{\mu}$. We write $B'_\mu=B'_{(a_i)}$. We fix such a sequence $(a_i)$ and define the parabolic subgroup $P_{m,k,\mu}=P_{m,k,(a_i)}$ to be the subgroup of $G$ that stabilizes a flag  of the following form
\begin{eqnarray*}
\text{(A\rm{III})}&&0\subset V_1\subset V_2\subset\cdots\subset V_{n-m-2k}\subset V\\
\text{(BD\rm{I}) or (C\rm{I})}&&0\subset V_1\subset\cdots\subset V_{n-m-2k}\subset V_{n-m}\subset V_{n-m}^\p\subset V_{n-m-2k}^\p\subset\cdots\subset V_1^\p\subset V\\
\text{(C\rm{II}) or (D\rm{III})}&&0\subset V_1\subset V_2\subset\cdots\subset V_{n-m-2k}\subset V_{n-m-2k}^\p\subset\cdots\subset V_1^\p\subset V,
\end{eqnarray*}
 where
$
 V_i=V_{i-c_i}^+\oplus V_{c_i}^-,\ 1\leq i\leq n-m-2k,\ 
 V_{n-m}=V_{n-m-k-c_{n-m-2k}}^+\oplus V_{k+c_{n-m-2k}}^-,\ c_i=\sum_{j=1}^ia_j.
$ The parabolic subgroup $P_{m,k,\mu}$ is $\theta$-stable. Let $L_{m,k,\mu}=L_{m,k,(a_i)}$ be the natural $\theta$-stable Levi subgroup of   $P_{m,k,\mu}$.  We have 
\begin{eqnarray*}
\text{(A\rm{III})}&&
 L_{k,\mu}\cong  GL_{2k}\times  GL_1^{n-2k},\ 
L_{k,\mu}^\theta\cong GL_k\times GL_k\times  GL_1^{n-2k};
\\
\text{(BD\rm{I})}&&
 L_{m,k,\mu}\cong SO_{2m+m_0}\times GL_{2k}\times GL_1^{n-m-2k},\\
&&L_{m,k,\mu}^\theta\cong S(O_{m+m_0}\times O_m)\times GL_k\times GL_k\times GL_1^{n-m-2k};
\\
\text{(C\rm{I})} &&L_{m,k,\mu}\cong Sp_{2m}\times GL_{2k}\times GL_1^{n-m-2k},\ L_{m,k,\mu}^\theta\cong GL_{m}\times GL_k\times GL_k\times GL_1^{n-m-2k}; 
\\
\text{(C\rm{II})}&&L_{m,k,\mu}\cong Sp_{4k}\times GL_1^{n-2k},\ L_{m,k,\mu}^\theta\cong Sp_{2k}\times Sp_{2k}\times GL_1^{n-2k};
\\
\text{(D\rm{III})}&&L_{m,k,\mu}\cong SO_{4k}\times GL_1^{n-2k} ,\ L_{m,k,\mu}^\theta\cong GL_{2k}\times GL_k^{n-2k},
\end{eqnarray*}
where $m_0=1$ (resp. $0$) in type ${\rm BI}$ (resp. ${\rm DI}$).  One readily checks that 
$
K.(\fp_{m,k,\mu})_1=\overline{\widecheck\cO}_{m,k,\mu}.
$ 

Note that in type \rm{DI}, when $m=0$, the $\tilde{K}$-conjugacy class of $P_{0,k,\mu}$ in $G/P$ decomposes into two $K$-conjugacy classes; as before,  we write $P_{0,k,\mu}^{\rm I}$ and $P_{0,k,\mu}^{\rm II}$ for the two $\theta$-stable parabolic subgroups that are conjugate under $\tilde{K}$ but not conjugate under $K$. Moreover, when $\mu\neq\emptyset$, $K.(\fp_{0,k,\mu}^{\rm{I}})_1=K.(\fp_{0,k,\mu}^{\rm{II}})_1$, and when $\mu=\emptyset$, let $K.(\fp_{0,n/2,\emptyset}^{\omega})_1=\overline{\widecheck\cO^\omega}_{0,n/2,\emptyset}$, $\omega=\rm{I},\rm{II}$.

For later use, let us describe the locus ${\widecheck\cO}_{m,k,\mu}$ in more detail. 

{(\rm{AIII})} \  Let $x\in {\widecheck\cO}_{k,\mu}$.  Then  $x$ has eigenvalues $\pm a_1,\ldots,\pm a_k$, each of multiplicity $1$, and eigenvalue $0$ of multiplicity $n-2k$. Moreover, there exist $v_i^+\in V^+$, $v_i^-\in V^-$, $i=1,\ldots, k$, such that  $V=W_0\oplus_{i=1}^k (W_{i}\oplus W_{-i}$), where $W_i=\on{span}\{v_i^++v_i^-\}$, $W_{-i}=\on{span}\{v_i^+-v_i^-\}$, $x|W_i=a_i$,\ $x|_{W_{-i}}=-a_i$, $i=1,\ldots,k$, and $W_0$ is the generalized eigenspace of $x$ with eigenvalue $0$. We obtain a symmetric pair $(GL_{W_0}, GL_{p-k}\times GL_{q-k})$ and $x|_{W_0}\in\cO_\mu$. 

{(\rm{BDI})} \  Let $x\in {\widecheck\cO}_{m,k,\mu}$.  Then  $x$ has eigenvalues $\pm a_1,\ldots,\pm a_m$, each of multiplicity $1$, eigenvalues $\pm b_1,\ldots,\pm b_k$, each of multiplicity $2$, and eigenvalue $0$ of multiplicity $N-2m-4k$. Moreover, we have an orthogonal decomposition $V=W_0\oplus \oplus_{i=1}^mW_i\oplus\oplus_{j=1}^k U_j$, where $W_0$ is the generalized eigenspace of $x$ with eigenvalue $0$, $W_i=W_{a_i}\oplus W_{-a_i}$, $x|_{W_{a_i}}=a_i$, $x|_{W_{-a_i}}=-a_i$, and $U_j=U_{b_j}\oplus U_{-b_j}$ with $U_{\pm b_j}$ the generalized eigenspace of $x$ with eigenvalue $\pm b_j$. Furthermore, there exist $v_i^+\in V^+$, $v_i^-\in V^-$, $i=1,\ldots, m$, such that $W_i=\on{span}\{v_i^++v_i^-\}$, $W_{-i}=\on{span}\{v_i^+-v_i^-\}$, and $(v_i^+,v_i^+)\neq 0$, $(v_i^-,v_i^-)\neq 0$. Similarly, $U_{b_j}=\on{span}\{w_j=w_j^++w_j^-,u_j=u_j^++u_j^-\}$, $U_{-b_j}=\on{span}\{w_j^+-w_j^-,u_j^+-u_j^-\}$, where $xw_j=b_jw_j$, $xu_j=b_ju_j+w_j$, $(w_j^+,w_j^+)=(w_j^-,w_j^-)=0$, $(w_j^+,u_j^+)\neq0$ and $(w_j^-,u_j^-)\neq0$. (The properties about the pairing of the basis elements can be deduced using $(xv,w)+(v,xw)=0$ and the fact that $x$ takes $V^+$ to $V^-$ and vice versa.) Since $(,)$ is nondegenerate on $W_0$, we get a symmetric pair $(SO_{N-2m-4k}, S(O_{p-m-2k}\times O_{q-m-2k}))$ and $x|_{W_0}\in\cO_\mu$.

{(\rm{CI})} \  The description of $x\in {\widecheck\cO}_{m,k,\mu}$ is entirely similar to the case of type \rm{BD\,I} except that we have $\langle v_i^+,v_i^-\rangle\neq 0$, $\langle w_j^+,w_j^-\rangle=0$, $\langle w_j^+,u_j^-\rangle\neq 0$ and $\langle w_j^-,u_j^+\rangle\neq 0$. 

{(\rm{CII}) (\rm{DIII})} \  Let $x\in {\widecheck\cO}_{k,\mu}$. Then  $x$ has eigenvalues $\pm a_1,\ldots,\pm a_k$, each of multiplicity $2$,  and eigenvalue $0$ of multiplicity $2n-4k$. We have an orthogonal decomposition $V=W_0\oplus \oplus_{i=1}^kW_i$, $W_i=W_{a_i}\oplus W_{-a_i}$, $x|_{W_{a_i}}=a_i$, $x|_{W_{-a_i}}=-a_i$, $\on{dim}W_{a_i}=\on{dim}W_{-a_i}=2$, and $x|_{W_0}\in\cO_\mu$.

 \subsection{Equivariant fundamental groups}In this subsection, we assume that either $m\neq 0$ or $k\neq 0$.
In what follows, to simplify the notation, we omit the subscripts and write $L=L_{m,k,\mu}$, $P=P_{m,k,\mu}$, and $\overline{\widecheck\cO}=K.\Lp_1$ etc.  We describe the relations among  the equivariant fundamental groups $\pi_1^{P_K}(\Lp_1^r)$, $\pi_1^K(\widecheck\cO)$ and $\pi_1^{L^\theta}(\Ll_1^{rs})$, where 
$
P_K=P\cap K,\ \  \Lp_1^r=\Lp_1\cap{\widecheck\cO},
$
and $\Ll_1^{rs}$ is the set of regular semisimple elements in $\Ll_1$ with respect to the symmetric pair $(L,L^\theta)$.

 Consider the natural projection map $\on{pr}:\Lp_1\to\Ll_1$. From the description of ${\widecheck\cO}_{m,k,\mu}$ at the end of the previous subsection, we see that 
$
 \on{pr}(\Lp_1^r)=\Ll_1^{rs}.
$
 Arguing as in~\cite[\S3.2]{CVX}, we see that 
\beq\label{map-plfd}
\text{the canonical map }\Phi:\pi_1^{P_K}(\Lp_1^r)\to \pi_1^{L^\theta}(\Ll_1^{rs})\text{ is surjective}.
\eeq
 We have
 \begin{eqnarray*}
 \pi_1^{L^\theta}(\Ll_1^{rs})\cong B_{W_k}\ && \text{(A\rm{III}) (C\rm{II}) (D\rm{III})}\\
 \pi_1^{L^\theta}(\Ll_1^{rs})\cong \widetilde{B}_{W_m}\times B_{W_k}\text{ (resp. $\widetilde{B}_{W_m'}\times B_{W_k}$)}\ &&\text{(B\rm{I}) (C\rm{I}) (resp. D\rm{I})}.
 \end{eqnarray*}
  To determine $\pi_1^{P_K}(\Lp_1^r)$ we consider the fibration
 \beqn
\mathring \pi:K\times ^{P_K}\Lp_1^r\to{\widecheck\cO}
 \eeqn
 which is the smooth part of the map $\pi:K\times ^{P_K}\Lp_1\to\overline{\widecheck\cO}$.
 We first note that $\pi_1^{P_K}(\Lp_1^r)= \pi_1^{K}(K\times ^{P_K}\Lp_1^r)$. Now,  $\pi_1^K(\widecheck\cO)$ acts on the set $\on{Irr}(\pi^{-1}(x))$ of irreducible components of $\pi^{-1}(x)$, $x\in\widecheck\cO$, with $\pi_1^{K}(K\times ^{P_K}\Lp_1^r)$ as stabilizer.  Thus we are reduced to study the action of $\pi_1^K(\widecheck\cO)$ on the set $\on{Irr}(\pi^{-1}(x))$. To understand this action we describe the fiber $\pi^{-1}(x)$, $x\in\widecheck\cO$.

For $x\in\widecheck\cO=\widecheck\cO_{m,k,\mu}$,  let $x'=x|_{W_0}$, where $W_0$ is the generalized eigenspace of $x$ with eigenvalue $0$. Let $(G',K')$ be the symmetric pair defined by $W_0$. Then $x'\in\cO_{\mu}\subset\Lp'_1$.  As before let $\tilde{K}'=O_{p'}\times O_{q'}$ when $(G',K')$ is of type \rm{DI} and $\tilde{K}'=K'$ otherwise. Consider $$\tilde\pi_{B'_\mu}:\tilde K'\times^{(B'_\mu)_{K'}}(\Ln'_\mu)_1\to\bar\cO_{\mu}.$$
\begin{lemma}
 We have 
 \beqn
 \pi^{-1}(x)\cong
\tilde\pi_{B'_\mu}^{-1}(x')\,. \eeqn
 \end{lemma}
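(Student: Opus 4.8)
The plan is to prove the isomorphism $\pi^{-1}(x) \cong \tilde\pi_{B'_\mu}^{-1}(x')$ by unwinding both sides as varieties of flags and identifying them. First I would recall that, by construction of $P = P_{m,k,\mu}$ in \S\ref{theta stable parabolics}, the fiber $\pi^{-1}(x)$ consists of the $K$-translates $kP$ of the base point $P$ such that $\on{Ad}(k)^{-1}(x) \in \fp_1$; equivalently, writing the parabolic as the stabilizer of the standard flag $0 \subset V_1 \subset \cdots \subset V_{n-m-2k} \subset V_{n-m} \subset V_{n-m}^\perp \subset \cdots \subset V$ (adapting to the type), the fiber is the variety of $K$-conjugates of that flag which are $x$-stable and isotropic in the appropriate sense. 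So $\pi^{-1}(x)$ is a variety of partial flags in $V$ that are isotropic, $x$-stable, of the prescribed dimension profile, and lie in the given $K$-orbit of flags.

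The key step is the decomposition $V = W_0 \oplus \bigoplus_i W_i (\oplus \bigoplus_j U_j)$ recorded at the end of \S\ref{theta stable parabolics}, where $W_0$ is the generalized $0$-eigenspace of $x$ and the $W_i$ (and $U_j$) are the generalized eigenspaces for the nonzero eigenvalues $\pm a_i$ (and $\pm b_j$). Since any $x$-stable subspace is a direct sum of its intersections with the generalized eigenspaces, and the nonzero-eigenvalue pieces $W_i$, $U_j$ are forced (by dimension count and the constraints defining $\widecheck\cO_{m,k,\mu}$, together with the $\theta$-stability/isotropy conditions) to contribute in a unique rigid way to any flag in the fiber --- each $W_{\pm a_i}$ being one-dimensional (resp. the $U_{\pm b_j}$ two-dimensional) with no moduli --- the only freedom left is in the $W_0$-part of the flag. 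Concretely, I would show that an $x$-stable isotropic flag of the required type in the correct $K$-orbit is the same datum as: (a) the rigidly-determined pieces on $\bigoplus W_i \oplus \bigoplus U_j$, contributing a point, plus (b) an $x'$-stable isotropic flag in $W_0$ of the dimension profile cut out by the sequence $(a_i, b_i)$ for the smaller pair $(G',K')$, lying in the appropriate $\tilde K'$-orbit. This last datum is exactly a point of $\tilde\pi_{B'_\mu}^{-1}(x')$, since $B'_\mu$ was chosen so that its defining sequence is $(a_i,b_i)$ and $\cO_{B'_\mu} = \cO_\mu \ni x'$. The passage between $K$ and $\tilde K$ (in type D\,\rm{I}) is handled exactly as in the discussion preceding the lemma: working with $\tilde K'$ on the right-hand side corresponds to the $\tilde K$-orbit of flags, and the bookkeeping matches because the Levi on the nonzero-eigenvalue part only involves $GL$-factors where the distinction is invisible.

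I would make this rigorous by constructing the map explicitly in both directions: given a flag in $\pi^{-1}(x)$, intersect each member with $W_0$ to produce a flag in $W_0$; conversely, given a flag in $\tilde\pi_{B'_\mu}^{-1}(x')$, take its direct sum with the canonical rigid pieces on $\bigoplus W_i \oplus \bigoplus U_j$ to recover a flag of the right type in $V$. One then checks these are inverse morphisms of varieties and that both respect the required isotropy, $x$-stability, dimension, and orbit conditions --- the verification of the pairing conditions among basis vectors (e.g. $(v_i^+, v_i^+) \neq 0$, $\langle w_j^+, u_j^- \rangle \neq 0$, etc.) being precisely what forces the rigidity of the nonzero-eigenvalue pieces, as already noted parenthetically in \S\ref{theta stable parabolics}. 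The main obstacle I anticipate is making the ``rigidity'' claim airtight in a type-uniform way: one must check carefully that in each classical type the intersection of the flag with each nonzero generalized eigenspace is genuinely forced (no moduli) given that the flag is $x$-stable, isotropic, and in the prescribed $K$-orbit --- in particular that the two-dimensional pieces $U_{b_j}$ in types BD\,\rm{I} and C\,\rm{I} (where $x$ acts with a nontrivial Jordan block) do not contribute extra components. This is a finite case-check but is the crux; everything else is a routine matching of combinatorial data (dimension profiles and eigenvalue multiplicities) already set up in \S\ref{theta stable parabolics}.
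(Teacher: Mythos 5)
Your plan is essentially the paper's own proof: the paper defines the map by forgetting the last member $V_{n-m}$ (which agrees with your ``intersect with $W_0$'' description), shows that $V_1\subset\cdots\subset V_{n-m-2k}$ already lie in $W_0$, and then proves that $V_{n-m}$ is uniquely determined by the smaller flag --- precisely the rigidity check you defer --- using the eigenvalue, isotropy and pairing conditions, with the other types handled by the same argument. One refinement for when you carry out that check: for the initial members the forcing does not come from isotropy (the eigenvectors $w_j^{\pm}$ are isotropic, so isotropy alone cannot exclude them) but from the fact that $x\in\fg_1$ interchanges $V^+$ and $V^-$ while the successive quotients $V_i/V_{i-1}$ are one-dimensional of pure parity, so $x$ is strictly triangular, hence nilpotent, on $V_{n-m-2k}$, forcing $V_{n-m-2k}\subset W_0$; the pairing conditions such as $(v_i^+,v_i^+)\neq 0$ and $(w_j^{\pm},u_j^{\pm})\neq 0$ are what pin down the last member $V_{n-m}$.
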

\begin{proof}
We prove the lemma in the case of type \rm{BDI}. The other cases are entirely similar. We show that the map
\beqn
\begin{array}{l}
\pi^{-1}(x)\to\tilde\pi_{B'_\mu}^{-1}(x') \qquad
 (V_1\subset \cdots\subset V_{n-m-2k}\subset V_{n-m})\mapsto (V_1\subset \cdots\subset V_{n-m-2k})
\end{array}
\eeqn
is an isomorphism. 

Suppose that $(V_1\subset \cdots\subset V_{n-m-2k}\subset V_{n-m})\in\pi^{-1}(x)$. We first show that $x|_{V_{n-m-2k}}=0$, i.e., $V_{n-m-2k}\subset W_0$, the generalized eigenspace of $x$ with eigenvalue $0$. Since $V_1$ is $x$-stable and by definition either $V_1\subset V^+$ or $V_1\subset V^-$, $x|_{V_1}=0$. Continuing by induction with $x$ applying to $V_{i}/V_{i-1}$, which is one dimensional, we conclude that $x|_{V_{n-m-2k}}=0$. Since $\dim W_0=N-2m-4k$ and $n=[N/2]$, we see that $V_{n-m-2k}$ is a maximal isotropic subspace in $W_0$. Since $x|_{W_0}=x'$, it follows that $(V_1\subset \cdots\subset V_{n-m-2k})
\in \tilde\pi_{B'_\mu}^{-1}(x')$. It remains to show that $V_{n-m}$ is uniquely determined by $V_i$, $i=1,\ldots,n-m-2k$. 

We make use of the description of $\widecheck\cO$ given in the end of \S\ref{theta stable parabolics}. Recall the orthogonal decomposition $V=W_0\oplus \oplus_{i=1}^mW_i\oplus\oplus_{j=1}^k U_j$ and the basis vectors  defined there. We claim that $V_{n-m}=V_{n-m-2k}\oplus\oplus_{j=1}^{k}\on{span}\{w_j^+,w_j^-\}$. This can be seen as follows. Consider the map induced by $x$ on $V_{n-m}/V_{n-m-2k}$ and the eigenvalues of $x$. Note that by definition, if a vector $v=v^++v^-\in V_{n-m}$, $v^{\pm}\in V^\pm$, then $v^+\in V_{n-m}$ and $v^-\in V_{n-m}$. Thus we conclude that the eigenvalue can not be $\pm a_i$ as $(v_i^+,v_i^-)\neq 0$ and $V_{n-m}$ is isotropic.  Now there exists some $j$ such that $w_j^+,w_j^-\in V_{n-m}$. Thus $V_{n-m}\subset \{w_j^+,w_j^-\}^\p$. Continuing by induction, we see that the claim holds.
\end{proof}

Using the above lemma and Proposition~\ref{sfiber-1} we conclude that
\begin{subequations}
\begin{eqnarray}\label{equivpi1-p1}
\pi_1^{P_K}(\Lp_1^r)=\pi_1^K(\widecheck\cO)&&\text{ (\rm{DI}) when $\mu=\emptyset$,\text{ or   (\rm{AIII}), (\rm{CI}), (\rm{CII}), (\rm{DIII}})\,.}
\end{eqnarray}
It remains to consider the types BI and DI when $\mu\neq\emptyset$. We first observe that $K\times ^{P_K}\Lp_1^r$ is connected and hence the action of $\pi_1^K(\widecheck\cO)$ on the set of connected components of $\pi^{-1}(x)$ is transitive. Also, by the previous lemma and Proposition~\ref{sfiber-2} the cardinality of the set of connected components of the fiber $\pi^{-1}(x)$, and hence that of the quotient $\pi_1^K(\widecheck\cO)/\pi_1^{P_K}(\Lp_1^r)$, is $2^{l_\mu}$. In particular, the action of $\pi_1^K(\widecheck\cO)$ factors through $A_{\tilde K'}(x')$.  Using the specific splitting of $\pi_1^K(\widecheck\cO)$ as a product in~\eqref{eqpi1-o2}, we can write
\begin{eqnarray}
\label{equivpi1-p2}
\pi_1^{P_K}(\Lp_1^r)\cong \widetilde{B}_{W_m}\times\widetilde{B}_{W_k}\times (\bZ/2\bZ)^{r_\mu-l_\mu}&&\text{ (\rm{BI})}\\
\label{equivpi1-p3}
\pi_1^{P_K}(\Lp_1^r)\cong\widetilde{B}^0_{W_m}\times\widetilde{B}_{W_k}\times (\bZ/2\bZ)^{r_\mu-l_\mu+1}&&\text{  (\rm{DI}) when $\mu\neq\emptyset$},
\end{eqnarray}
\end{subequations}
where $\widetilde{B}^0_{W_m}=(\bZ/2\bZ)^m\rtimes B_{W_m}^0\subset(\bZ/2\bZ)^m\rtimes B_{W_m}$ and $B_{W_m}^0$ is the subgroup of $B_{W_m}$ defined as the kernel of the map $B_{W_m}\to\bZ/2\bZ$ 
where a word in the braid generators $\sigma_1,\ldots,\sigma_m$ ($\sigma_m$ is the special one) is mapping to 0 (resp. 1) if the braid generator $\sigma_m$ appears even (resp. odd) number of times. Moreover in type  \rm{BI} (resp. \rm{DI}), the factor  $(\bZ/2\bZ)^{r_\mu-l_\mu}$ (resp. $(\bZ/2\bZ)^{r_\mu-l_\mu+1}$) is given by $\{a\in A_{K'}(\cO_\mu)\,|\,\phi(a)=1\text{ for all }\phi\in\Pi_{\cO_\mu}\}$.

Note that in type {\rm{DI}} when $\mu\neq\emptyset$, the map $\Phi$ in~\eqref{map-plfd} gives rise to a surjective map
\beq\label{mapzeta}
\zeta:\widetilde{B}^0_{W_m}\twoheadrightarrow\widetilde{B}_{W_m'}.
\eeq
\begin{remark}
It was previously known that such a surjective map $B_{W_m}^0\to B_{W_m'}$ exists, see, for example, a discussion in~\cite{BT}.
\end{remark}

 \subsection{Induced nilpotent orbital complexes}
 
Let us consider the parabolic induction of full support character sheaves on $\Ll_1$. For each irreducible representation $\psi$ of $\pi_1^{L^\theta}(\Ll_1^{rs})$, we write $\psi$ also for the irreducible representation of $\pi_1^{P_K}(\Lp_1^r)$ obtained via pull-back from the surjective map $\Phi:\pi_1^{P_K}(\Lp_1^r)\to \pi_1^{L^\theta}(\Ll_1^{rs})$ in~\eqref{map-plfd}. Then we have:
\beqn
\on{Ind}_{\Ll_1\subset\fp_1}^{\Lg_1}\on{IC}(\Ll_1^{rs},\cL_\psi) \ = \  \pi_! \on{IC}(K\times^{P_K}\Lp_1^r,\cL_\psi)[-]\,.
\eeqn
Furthermore, by the decomposition theorem, we have:
\beqn
 \pi_! \on{IC}(K\times^{P_K}\Lp_1^r,\cL_\psi) \cong \on{IC}(\widecheck\cO, R^{2d}\mathring\pi_!(\cL_\psi))[-]\oplus \{\text{other terms}\}\,,
\eeqn
where $d$ is the relative dimension of $\pi$. By construction we have
\beqn
R^{2d}\mathring\pi_!(\cL_\psi) \cong \bC[\pi_1^K(\widecheck\cO)]\otimes_{\bC[\pi_1^{P_K}(\Lp_1^r)]}\psi\,.
\eeqn
Consider the following decomposition
\beqn
\bC[\pi_1^K(\widecheck\cO)]\otimes_{\bC[\pi_1^{P_K}(\Lp_1^r)]}\psi=\psi_1\oplus\cdots\oplus\psi_k
\eeqn
 into irreducible representations  of $\pi_1^K(\widecheck\cO)$.  By the discussion above we have
 \beq\label{eqn-local systems}
 \bigoplus_{i=1}^k\on{IC}(\widecheck\cO,\cL_{\psi_i})\text{ appears as a direct summand of }\on{Ind}_{\Ll_1\subset\fp_1}^{\Lg_1}\on{IC}(\Ll_1^{rs},\cL_\psi)\text{ up to shift}.
 \eeq
 Let us write $A[-] \subset\mkern-15mu\raisebox{1.2pt}{\mbox{\larger[-12]$\oplus$}} B$ to indicate that $A$ appears as a direct summand of $B$ up to shift.

For each $\tau\in\cP(k)$, we continue to write $L_\tau$ for the $B_{W_k}$-representation obtained by  pulling back $L_\tau\in\on{Irr}\cH_{W_k,1,-1}$  via the surjective map $\bC[B_{W_k}]\to \cH_{W_k,1,-1}$.
The following claims follow from the description of $\pi_1^K(\widecheck\cO)$ in~\eqref{eqpi1-o1}-\eqref{eqpi1-o3}, the description of $\pi_1^{P_K}(\Lp_1^r)$ in~\eqref{equivpi1-p1}-\eqref{equivpi1-p3} and~\eqref{eqn-local systems}.

{\bf ({\rm AIII}) ({\rm CII}) ({\rm DIII})}\ \   For each $\tau\in\cP(k)$,  let $\cL_\tau$ be the irreducible $L^\theta$-equivairant local system on $\Ll_1^{rs}$ given by $L_\tau$. Then
\beqn
\on{IC}(\widecheck\cO,\cT_\tau)[-]\ \dsum\ \on{Ind}_{\Ll_1\subset\fp_1}^{\Lg_1}\on{IC}(\Ll_1^{rs},\cL_\tau).\eeqn

{\bf ({\rm CI})} Let $\cL_\rho\boxtimes\cL_\tau$, $\rho\in\Theta_{m}^C$ and $\tau\in\cP(k)$, be the irreducible $L^\theta$-equivairant local system on $\Ll_1^{rs}$ given by the irreducible representation $\rho\boxtimes L_\tau$ of $\pi_1^{L^\theta}(\Ll_1^{rs})$, where $\widetilde{B}_{W_m}$ acts via $\rho$ and $B_{W_k}$ acts via $L_\tau$.  Then 
  \beqn
  \on{IC}(\widecheck{\cO},\cT_{\rho,\tau})[-]\ \dsum\ \on{Ind}_{\Ll_1\subset\fp_1}^{\Lg_1}\on{IC}(\Ll_1^{rs},\cL_\rho\boxtimes\cL_\tau).
  \eeqn

{\bf ({\rm BI})} Let $\cL_\rho\boxtimes\cL_\tau$, $\rho\in\Theta_{m}^B$, and $\tau\in\cP(k)$, be the  irreducible $L^\theta$-equivairant local system on $\Ll_1^{rs}$ given by the irreducible representation $\rho\boxtimes L_\tau$ of $\pi_1^{L^\theta}(\Ll_1^{rs})$, where $\widetilde{B}_{W_m}$ acts via $\rho$ and $B_{W_k}$ acts via $L_\tau$.  Then
 \beqn
\bigoplus_{\phi\in\Pi_{\cO_\mu}}\on{IC}(\widecheck\cO,\cT_{\rho,\tau,\phi})[-]\ \dsum\  \on{Ind}_{\Ll_1\subset\fp_1}^{\Lg_1}\on{IC}(\Ll_1^{rs},\cL_\rho\boxtimes\cL_\tau).
 \eeqn
 
{\bf ({\rm DI})}  Let $\cL_{\rho_0}\boxtimes\cL_\tau$, $\rho_0\in\Theta_{m}^{D,0}$ and $\tau\in\cP(k)$, be the irreducible $L^\theta$-equivairant local system on $\Ll_1^{rs}$ given by the irreducible representation $\rho_0\boxtimes L_\tau$ of $\pi_1^{L^\theta}(\Ll_1^{rs})$, where $\widetilde{B}_{W_m}$ acts via $\rho_0$ and $B_{W_k}$ acts via $L_\tau$.  Let $\rho_0$ also denote the representation of $\widetilde{B}_{W_m}^0$ obtained via the surjective map $\widetilde{B}_{W_m}^0\to\widetilde{B}_{W_m'}$ (see~\eqref{mapzeta}). Then, as representations of $\widetilde B_{W_m}$, 
$\bC[{\widetilde{B}_{W_m}}]\otimes_{\bC[{\widetilde{B}_{W_m}^0}]}\rho_0=\rho_0^{\rm{I}}\oplus\rho_0^{\rm{II}},
$
where $\rho_0^{\rm{I}}$ and $\rho_0^{\rm{II}}$ are non-isomorphic irreducible representations of $\widetilde{B}_{W_m}$. In view of Proposition \ref{thm-nearby cycles}, we conclude that $\rho_0^{\rm{I}},\rho_0^{\rm{II}}\in\Theta_m^D$.  Moreover, $\Theta_m^D=\{\rho_0^{\rm{I}},\rho_0^{\rm{II}}\mid\rho_0\in\Theta_m^{D,0}\}$. Thus
\begin{eqnarray*}
\bigoplus_{\phi\in\Pi_{\cO_\mu}}\bigoplus_{\omega=\rm{I},\rm{II}}\on{IC}(\widecheck\cO,\cT_{\rho_0^\omega,\tau,\phi})[-]\ \dsum\  \on{Ind}_{\Ll_1\subset\fp_1}^{\Lg_1}\on{IC}(\Ll_1^{rs},\cL_{\rho_0}\boxtimes\cL_\tau)\text{ when $\mu\neq\emptyset$};\\
\on{IC}(\widecheck\cO_{m,k,\emptyset},\cT_{\rho,\tau})[-]\ \dsum\  \on{Ind}_{\Ll_1\subset\fp_1}^{\Lg_1}\on{IC}(\Ll_1^{rs},\cL_\rho\boxtimes\cL_\tau),\text{ when $\mu=\emptyset$ and $m>0$.}\\
\on{IC}(\widecheck\cO_{0,n/2,\emptyset}^\omega,\cT_{\tau})[-]\ \dsum\ \on{Ind}_{\Ll_1^\omega\subset\fp_1^\omega}^{\Lg_1}\on{IC}((\Ll_1^\omega)^{rs},\cL_\tau), \ \omega=\mathrm{\rm{I},\rm{II}},\text{ when $\mu=\emptyset$ and $m=0$.}
\end{eqnarray*}

Since Fourier transform commutes with parabolic induction (see~\eqref{commutativity}), in view of~\eqref{char-fullsupp}, we conclude that  the IC sheaves in Theorems~\ref{thm-type A}-\ref{thm-type BD} are character sheaves. Since they are pairwise non-isomorphic, to prove Theorems~\ref{thm-type A}-\ref{thm-type BD}, it remains to check that the number of the IC sheaves equals the number of character sheaves in each case.

\subsection{Proof of Theorem~\ref{thm-type A}} 
In each case, we prove the theorem by establishing an explicit bijection of the two sides.

\subsubsection{}Suppose that $(G,K)=(GL_n,GL_p\times GL_q)$. 
Given a signed Young diagram $\lambda=(\lambda_1)^{p_1}_+(\lambda_1)^{q_1}_-\cdots(\lambda_s)^{p_s}_+(\lambda_s)^{q_s}_-$ of signature $(p,q)$, we associate it with the following data
\begin{enumerate} 
\item the partition $\alpha=(\lambda_1)^{l_1}\cdots(\lambda_s)^{l_s}\in\cP(k)$, $l_i=\min\{p_i,q_i\}$, $k=\sum\lambda_il_i$
\item  the signed Young diagram of signature $(p-k,q-k)$ 
\beqn
\mu=(\lambda_1)^{p_1-l_1}_+(\lambda_1)^{q_1-l_1}_-\cdots(\lambda_s)^{p_s-l_s}_+(\lambda_s)^{q_s-l_s}_-.
\eeqn
\end{enumerate}
The map
\beqn
\on{IC}(\cO_\lambda,\bC)\mapsto\on{IC}(\widecheck\cO_{1^k_+1^k_-\sqcup\mu},\cT_{\alpha^t})\text{ for all }\lambda;
\eeqn
where $\alpha^t$ denotes the transpose partition of $\alpha$, establishes the required bijection.

\subsubsection{}Suppose that $(G,K)=(Sp_{2n},Sp_{2p}\times Sp_{2q})$, $q\leq n$. Let  
\beqn
\lambda=(2\lambda_1)^{a_1}_+(2\lambda_1)^{a_1}_-\cdots(2\lambda_s)^{a_s}_+(2\lambda_s)^{a_s}_-(2\mu_1+1)^{2p_1}_+(2\mu_1+1)^{2q_1}_-\cdots(2\mu_t+1)^{2p_t}_+(2\mu_t+1)^{2q_t}_-
\eeqn
be a signed Young diagram of signature $(2p,2q)$. 
Let $l_i=\min\{p_i,q_i\}$ and $k=\sum2[\frac{a_i}{2}]\lambda_i+\sum l_i(2\mu_i+1)$. We associate to $\lambda$ the following data
\begin{enumerate}
\item the partition $\alpha=(2\lambda_1)^{[\frac{a_1}{2}]}\cdots(2\lambda_s)^{[\frac{a_s}{2}]}(2\mu_1+1)^{l_1}\cdots(2\mu_t+1)^{l_t}\in\cP(k)$,
\item the signed Young diagram of signature $(2p-2k,2q-2k)$
\beqn
\begin{gathered}
\mu=(2\lambda_1)^{a_1-2[\frac{a_1}{2}]}_+(2\lambda_1)^{a_1-2[\frac{a_1}{2}]}_-\cdots(2\lambda_s)^{a_s-2[\frac{a_s}{2}]}_+(2\lambda_s)^{a_s-2[\frac{a_s}{2}]}_-(2\mu_1+1)^{2p_1-2l_1}_+(2\mu_1+1)^{2q_1-2l_1}_-\\\cdots(2\mu_t+1)^{2p_t-2l_t}_+(2\mu_t+1)^{2q_t-2l_t}_-.
\end{gathered}
\eeqn
\end{enumerate}
The map
\beqn
\on{IC}(\cO_\lambda,\bC)\mapsto\on{IC}(\widecheck\cO_{1^{2k}_+1^{2k}_-\sqcup\mu},\cT_{\alpha^t})
\eeqn
establishes the required bijection.
 
\subsubsection{}Suppose that $(G,K)=(SO_{2n},GL_n)$. 
Let  
\beqn
\lambda=(2\lambda_1+1)^{a_1}_+(2\lambda_1+1)^{a_1}_-\cdots(2\lambda_s+1)^{a_s}_+(2\lambda_s+1)^{a_s}_-(2\mu_1)^{2p_1}_+(2\mu_1)^{2q_1}_-\cdots(2\mu_t)^{2p_t}_+(2\mu_t)^{2q_t}_-
\eeqn
be a signed Young diagram of signature $(n,n)$. 
Let $l_i=\min\{p_i,q_i\}$ and $k=\sum[\frac{a_i}{2}](2\lambda_i+1)+\sum 2\mu_il_i$. We associate to the above Young diagram the following data
\begin{enumerate} 
\item the partition $\alpha=(2\lambda_1+1)^{[\frac{a_1}{2}]}\cdots(2\lambda_s+1)^{[\frac{a_s}{2}]}(2\mu_1)^{l_1}\cdots(2\mu_t)^{l_t}\in\cP(k)$,
\item the signed Young diagram of signature $(n-2k,n-2k)$
\beqn
\begin{gathered}
\mu=(2\lambda_1+1)^{a_1-2[\frac{a_1}{2}]}_+(2\lambda_1+1)^{a_1-2[\frac{a_1}{2}]}_-\cdots(2\lambda_s+1)^{a_s-2[\frac{a_s}{2}]}_+(2\lambda_s+1)^{a_s-2[\frac{a_s}{2}]}_-\\(2\mu_1)^{2p_1-2l_1}_+(2\mu_1)^{2q_1-2l_1}_-\cdots(2\mu_t)^{2p_t-2l_t}_+(2\mu_t+1)^{2q_t-2l_t}_-.
\end{gathered}
\eeqn
\end{enumerate}
The map
\beqn
\on{IC}(\cO_\lambda,\bC)\mapsto\on{IC}(\widecheck\cO_{1^{2k}_+1^{2k}_-\sqcup\mu},\cT_{\alpha^t})
\eeqn
establishes the required bijection.
 
Let us record that
 \beqn
 |\cA_{GL_n}{((\mathfrak{so}_{2n})_1)}|=\sum_{k=0}^{[\frac{n}{2}]}\pa(k)\pa(l,k)2^k\mathbf{do}(n-2l)=\text{Coefficient of $x^n$ in }\prod_{s\geq 1}\frac{1+x^s}{(1-x^{2s})^2}
\eeqn
 where $\pa(k)$ denotes the number of partitions of $k$, $\pa(l, k)$ denotes the number of partitions of $l$ into (not necessarily distinct) parts of exactly $k$ different sizes and
$\mathbf{do}(l)$ denotes the number of partitions of $l$ into distinct odd parts. Here we have used
\beq\label{partition identity-1}
\sum_k \pa(n,k)\,2^k=\text{Coefficient of }x^n\text{ in }\prod_{s\geq 1}\frac{1+x^s}{1-x^s}.
\eeq

\subsection{Proof of Theorem~\ref{thm-type C}}Let $(G,K)=(Sp_{2n},GL_n)$.

\begin{lemma}
We have
\beq\label{cardinality-type C}
|\cA_{GL_n}{((\mathfrak{sp}_{2n})_1)}|=\text{Coefficient of }x^n\text{ in }\prod_{s\geq 1}\frac{(1+x^s)^3}{(1-x^s)^2}.
\eeq
\end{lemma}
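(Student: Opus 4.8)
The plan is to evaluate the left-hand side directly from the classification of nilpotent $K$-orbits and their component groups recorded in \S\ref{nilp comp}, and then to recognize the resulting series.

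Recall first that $\cA(\fg,K)$ is parametrized by pairs $(\cO,\cE)$ with $\cO$ a nilpotent $K$-orbit in $\cN_1$ and $\cE$ an irreducible $K$-equivariant local system on $\cO$, so $|\cA(\mathfrak{sp}(2n),GL(n))|=\sum_{\cO}|\widehat A_K(\cO)|$. For type C\,\rm{I} the orbits $\cO_\lambda$ are indexed by signed Young diagrams $\lambda$ of signature $(n,n)$ with $p_i=q_i$ whenever $\lambda_i$ is odd, and $|\widehat A_K(\cO_\lambda)|=2^{r_\lambda}$ with $r_\lambda=|\{i:\lambda_i\text{ even},\,p_i>0\}|+|\{i:\lambda_i\text{ even},\,q_i>0\}|$ by~\eqref{component group-C}. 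A row of even length carries equally many $+$ and $-$ boxes, and the condition $p_i=q_i$ makes each odd part do the same; hence every valid signed Young diagram has signature $(m,m)$ for some $m\geq0$, and those of signature $(n,n)$ are exactly those with $2n$ boxes in total. It therefore suffices to compute the one-variable generating function $\sum_{n\geq0}|\cA(\mathfrak{sp}(2n),GL(n))|\,x^n$, where $x$ records half the number of boxes.

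This generating function factors over part sizes. A part of odd size $2j+1$ occurs as a balanced block of $m$ rows $+$ and $m$ rows $-$ ($m\geq0$), with weight $1$, yielding $\sum_{m\geq0}x^{m(2j+1)}=(1-x^{2j+1})^{-1}$; over all odd sizes this gives $\prod_{s\text{ odd}}(1-x^s)^{-1}$. A part of even size $2j$ occurs as $p$ rows $+$ and $q$ rows $-$ with $p,q\geq0$, contributing $j(p+q)$ to the box count with weight $2^{[p>0]+[q>0]}$; summing the geometric series gives
\[
1+\frac{4x^{j}}{1-x^{j}}+\frac{4x^{2j}}{(1-x^{j})^{2}}=\frac{(1+x^{j})^{2}}{(1-x^{j})^{2}},
\]
so the even sizes contribute $\prod_{j\geq1}(1+x^j)^2(1-x^j)^{-2}$. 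Combining, $|\cA(\mathfrak{sp}(2n),GL(n))|$ is the coefficient of $x^n$ in $\prod_{s\text{ odd}}(1-x^s)^{-1}\cdot\prod_{s\geq1}(1+x^s)^2(1-x^s)^{-2}$, and Euler's identity $\prod_{s\text{ odd}}(1-x^s)^{-1}=\prod_{s\geq1}(1+x^s)$ turns this into $\prod_{s\geq1}(1+x^s)^3(1-x^s)^{-2}$, as claimed.

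The argument involves no real obstacle; the only steps needing care are the remark that under the type C\,\rm{I} constraint the signature-$(n,n)$ condition is equivalent to prescribing the total box number — which is what lets us work with one variable rather than extract a diagonal coefficient of a two-variable series — and the bookkeeping of the weights $2^{r_\lambda}$ in the per-part generating functions. If one prefers, the closing identity can instead be read off from the combinatorial formulas of Appendix~\ref{combinatorics}, but it is elementary.
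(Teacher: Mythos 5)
Your proof is correct. It differs in its bookkeeping from the paper's argument: the paper first groups the signed Young diagrams by their underlying unsigned partition $\lambda\in\cP(C_n)$ (partitions of $2n$ with odd parts of even multiplicity), shows via~\eqref{component group-C} that each shape carries total weight $M_\lambda=(\prod_i m_i)4^s$, and then evaluates $\sum_\lambda M_\lambda$ by splitting off the odd parts (generating function $\prod_{s\ge1}(1+x^s)$) and handling the even-part weights through the identity~\eqref{partition identity-1} together with a combinatorial argument about writing a partition as a union of two partitions. You instead keep the signed data $(p,q)$ for each part size and factor the generating function directly as an Euler product: odd sizes give $(1-x^{2j+1})^{-1}$, and even sizes give the explicit double geometric series summing to $(1+x^j)^2/(1-x^j)^2$, after which Euler's odd--distinct identity yields~\eqref{cardinality-type C}. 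The two computations are equivalent (summing your even-size factor over a fixed multiplicity $m$ recovers the paper's per-shape weight $4m$), but yours is more self-contained, bypassing $M_\lambda$, $N_\mu$ and~\eqref{partition identity-1}, whereas the paper's route reuses machinery it also needs elsewhere. Your preliminary observation that the type C\,\rm{I} constraints force every admissible signed diagram to have balanced signature, so that signature $(n,n)$ just means $2n$ boxes and a one-variable series suffices, is the right point to make explicit and is implicitly used in the paper as well.
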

\begin{proof}
Let us write $\cP(C_n)$ for the set of partitions of $2n$ such that each odd part occurs with an even multiplicity.  
Given a partition 
\beqn
\lambda=(2\lambda_1)^{m_1}\cdots(2\lambda_s)^{m_s}(2\lambda_{s+1}+1)^{2m_{s+1}}\cdots(2\lambda_{t}+1)^{2m_{t}}\in \cP(C_n),
\eeqn 
using~\eqref{component group-C} we see that the number of orbital complexes $\on{IC}(\cO,\cE)$ supported on the nilpotent orbits whose underlying Young diagrams have shape $\lambda$ is
$
(\prod_{i=1}^sm_i)4^s:=M_\lambda.
$ 
Thus we have that
\beqn
|\cA_{GL_n}{((\mathfrak{sp}_{2n})_1)}|=\sum _{\lambda\in \cP(C_n)}M_\lambda.
\eeqn
For a partition $\mu=(\mu_1)^{m_1}\cdots(\mu_s)^{m_s}$, we define $N_\mu=(\prod_{i=1}^sm_i)4^s$. Then we have
\beqn
\sum _{\lambda\in \cP(C_n)}M_\lambda=\sum_k\sum_{\mu\in\cP(k)}N_\mu\cdot \mathbf{od}(n-k),
\eeqn
where $\mathbf{od}(l)$ denotes the number of the partitions of $l$ into odd parts. Since the number of partition of $n$ into distinct parts equals the number
of partitions of $n$ into odd parts, we have $\sum\mathbf{od}(l)x^l=\prod_{s\geq1}(1+x^s)$. The lemma follows from
\beqn
\sum_{\mu\in\cP(n)}N_\mu=\text{Coefficient of }x^n\text{ in }\prod_{s\geq 1}\frac{(1+x^s)^2}{(1-x^s)^2}.
\eeqn
This can be seen using~\eqref{partition identity-1} and the following observation. 
We can rewrite a partition $(\mu_1)^{m_1}\cdots(\mu_s)^{m_s}$ into the union of two partitions $(\mu_1)^{m_1'}\cdots(\mu_s)^{m_s'}$ and $(\mu_1)^{m_1-m_1'}\cdots(\mu_s)^{m_s-m_s'}$, where $0\leq m_i'\leq m_i$. Thus we get
\beqn
\left(1+\sum_{j=1}^s\prod_{1\leq i_1<i_2<\cdots<i_j\leq s}(m_{i_1}-1)\cdots(m_{i_j}-1)\right)4^s=\left(\prod_{i=1}^sm_i\right)4^s.
\eeqn
\end{proof}

Let us write $b_C(n)=|\on{SYD}^0_{(Sp_{2n},GL_n)}|$. Using Proposition~\ref{prop-Richardson} one readily checks that
\beqn
b_C(n)=\sum_{k}\pa(n,k)2^k=\text{coefficient of }x^n\text{ in }\prod_{s\geq 1}\frac{1+x^s}{1-x^s}.
\eeqn
Using~\eqref{number of fullsupp} and~\eqref{generating for Hecke}, we see that the number of IC sheaves in Theorem~\ref{thm-type C} is
\beqn
\begin{gathered}
\sum_{m=0}^n|\Theta_{m}^C|\sum_{k=0}^{[\frac{n-m}{2}]}\pa(k)\,b_C(n-m-2k)=\text{Coefficient of }x^n\\
\text{ in }\prod_{s\geq 1}(1+x^s)^3\prod_{s\geq 1}\frac{1}{1-x^{2s}}\prod_{s\geq 1}\frac{1+x^s}{1-x^s}
\overset{\eqref{cardinality-type C}}{=}|\on{Char}_{GL_n}((\mathfrak{sp}_{2n})_1)|.
\end{gathered}
\eeqn

\subsection{Proof of Theorem~\ref{thm-type BD}}\label{ssec-proof BD}

Let $(G,K)=(SO_N,S(O_p\times O_q))$. 
We write $\on{SYD}_{p,q}$ for the set of signed Young diagrams that parametrizes $K$-orbits in $\cN_1$. Let us also write
\beq\label{a_p,q1}
A_{p,q}=|\cA_{S(O_p\times O_q)}{((\mathfrak{so}_{p+q})_1)}|.
\eeq
Since $A_K(\cO_\lambda)=(\bZ/2\bZ)^{r_\lambda}$ by~\eqref{component group-BD}, we have
\beqn
A_{p,q}=\sum_{\lambda\in\on{SYD}_{p,q}}2^{r_\lambda}+\pa(\frac{q}{2})\,\delta_{p,q},
\eeqn
where $\pa(k)$ is the number of partitions of $k$ and we set $\pa(\frac{q}{2})=0$ if $q$ is odd. We note that the second term in the above equation arises only when $p=q$ is even. In the latter case there are two nilpotent orbits corresponding to each partition with only even parts.

The formula in the following proposition is derived and proved by Dennis Stanton.
\begin{proposition}\label{total number-BD}
\beqn
\sum_{p,q=0}^{\infty}2A_{p,q}u^pv^q-3\sum_{q=0}^{\infty}\pa(q)u^{2q}v^{2q}=\prod_{k=1}^{\infty}\frac{1}{1-u^{2k}v^{2k}}\prod_{m=0}^{\infty}\frac{(1+u^{m+1}v^m)(1+u^mv^{m+1})}{(1-u^{m+1}v^m)(1-u^mv^{m+1})}.
\eeqn
\end{proposition}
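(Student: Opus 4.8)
The plan is to reduce the identity to a single multiplicative generating function over row-lengths, the combinatorial heart being the two-variable identity $(1+x)(1+y) = (1-x)(1-y) + 2x(1-y) + 2y(1-x) + 4xy$. For a signed Young diagram $\lambda$ written as in~\eqref{signed Young diagram}, introduce the shifted statistic $r'_\lambda := |\{i : \lambda_i \text{ odd},\ p_i>0\}| + |\{i : \lambda_i \text{ odd},\ q_i>0\}|$, so that by~\eqref{component group-BD} we have $r_\lambda = r'_\lambda - 1$ if $\lambda$ has an odd part and $r_\lambda = r'_\lambda = 0$ otherwise; equivalently $2^{r'_\lambda} = 2\cdot 2^{r_\lambda}$ for $\lambda$ with an odd part and $2^{r'_\lambda} = 2^{r_\lambda} = 1$ for $\lambda$ with only even parts. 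Set $T(u,v) := \sum_\lambda 2^{r'_\lambda}\, u^p v^q$, the sum over all signed Young diagrams $\lambda$ valid for type $\mathrm{BD\,I}$, where $(p,q)$ is the signature of $\lambda$. The first step is to show that the left-hand side of the proposition equals $T(u,v)$.

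For this, write $S(u,v) := \sum_\lambda 2^{r_\lambda} u^p v^q$ and $\mathcal{E}(u,v) := \sum_{\lambda \text{ with no odd part}} u^p v^q$. Since a signed Young diagram with no odd part is precisely the datum, for each even length $2k$, of a common multiplicity $p_k = q_k \geq 0$ of $+$-starting and $-$-starting rows (and such a row of length $2k$ carries $k$ plus-boxes and $k$ minus-boxes, hence weight $u^k v^k$), we get $\mathcal{E}(u,v) = \prod_{k\geq1}\frac{1}{1-u^{2k}v^{2k}} = \sum_{q\geq0}\pa(q)\,u^{2q}v^{2q}$; combining this with the relation between $r'_\lambda$ and $r_\lambda$ recorded above gives $T = 2S - \mathcal{E}$. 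On the other hand, $A_{p,q} = \sum_{\lambda \in \on{SYD}_{p,q}} 2^{r_\lambda} + \pa(q/2)\,\delta_{p,q}$ together with $\sum_{p,q}\pa(q/2)\delta_{p,q} u^p v^q = \sum_{m\geq0}\pa(m)(uv)^{2m} = \sum_{q\geq0}\pa(q) u^{2q}v^{2q} = \mathcal{E}$ yields $\sum_{p,q} A_{p,q} u^p v^q = S + \mathcal{E}$, hence $\sum_{p,q} 2A_{p,q} u^p v^q - 3\sum_q \pa(q) u^{2q}v^{2q} = 2(S+\mathcal{E}) - 3\mathcal{E} = 2S - \mathcal{E} = T(u,v)$, as claimed. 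This bookkeeping — making the $-1$ in~\eqref{component group-BD}, the doubling of orbits for all-even-part diagrams (the $\pa(q/2)\delta_{p,q}$ term, which occurs only for $(SO(4m), S(O(2m)\times O(2m)))$), and the coefficient $3$ all cancel against one another — is the step that requires the most care and is what I expect to be the main obstacle.

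The second step is to compute $T(u,v)$ by factoring over row-lengths. A signed Young diagram valid for type $\mathrm{BD\,I}$ is exactly a choice, for every length $\ell \geq 1$, of multiplicities $p_\ell \geq 0$ ($+$-starting rows) and $q_\ell \geq 0$ ($-$-starting rows), subject to $p_\ell = q_\ell$ when $\ell$ is even; and both the weight $u^p v^q$ and the quantity $2^{r'_\lambda}$ factor over $\ell$, since a row of length $2m+1$ starting with $+$ (resp.\ $-$) carries weight $u^{m+1}v^m$ (resp.\ $u^m v^{m+1}$), a row of length $2k$ carries weight $u^k v^k$, and only odd lengths contribute to $r'_\lambda$. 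Hence $T(u,v) = \prod_{k\geq1}\big(\sum_{p_k\geq0}(u^{2k}v^{2k})^{p_k}\big)\cdot\prod_{m\geq0}\big(\sum_{p,q\geq0} 2^{[p>0]+[q>0]} x^p y^q\big)$ with $x = u^{m+1}v^m$ and $y = u^m v^{m+1}$. The even factor is $\prod_{k\geq1}\frac{1}{1-u^{2k}v^{2k}}$. For the odd factor, splitting the sum according to whether $p$ and/or $q$ vanish gives $1 + \frac{2x}{1-x} + \frac{2y}{1-y} + \frac{4xy}{(1-x)(1-y)}$, and clearing denominators together with the polynomial identity $(1-x)(1-y) + 2x(1-y) + 2y(1-x) + 4xy = (1+x)(1+y)$ turns this into $\frac{(1+u^{m+1}v^m)(1+u^m v^{m+1})}{(1-u^{m+1}v^m)(1-u^m v^{m+1})}$. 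Assembling the two products gives exactly the right-hand side of the proposition. The whole argument is purely formal, so there is no analytic obstacle; as indicated above, the one delicate point is the accounting in the first step.
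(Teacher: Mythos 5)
Your proof is correct and follows essentially the same route as the paper: the bookkeeping in your first step reproduces the paper's relation $wt(p,q)=2A_{p,q}-3\pa(q/2)\delta_{p,q}$ (equation~\eqref{wtp_q}), and your second step is the same factorization over row lengths used in Stanton's proof of Proposition~\ref{prop-generating function-total}, with only a cosmetic difference in how the odd-length factor is summed (your four-case split and the identity $(1+x)(1+y)=(1-x)(1-y)+2x(1-y)+2y(1-x)+4xy$ versus the paper's summation over the total multiplicity $r$). No gaps.
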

\begin{proof}
The proposition follows from Proposition~\ref{prop-generating function-total} in Appendix~\ref{combinatorics}, since by~\eqref{a_p,q1} and~\eqref{wt(p,q) def} we have
\beq\label{wtp_q}
wt(p,q)=2A_{p,q}\text{ if $p+q$ is odd},\ \ \ \ wt(p,q)=2A_{p,q}-3\pa(\frac{q}{2})\,\delta_{p,q}\text{ if $p+q$ is even}.
\eeq 
\end{proof}

Suppose that either $p$ or $q$ is even. Let us now write
\beqn
b_{p,q}=\sum_{\lambda\in\on{SYD}^0_{(SO_{p+q},S(O_p\times O_q))}}|\Pi_{\cO_\lambda}|
\eeqn
where $\Pi_{\cO}$ is defined in~\eqref{char-bi}. Namely, $b_{p,q}$ is the number of $\on{IC}(\cO,\cE_\phi)$'s such that $\cO\in\underline{\cN_1^0}$ and $\phi\in\Pi_\cO$. Note that we have $b_{p,q}=b_{q,p}$.

Let us write $\cP^{odd}(N)$ for the set of partitions of $N$ into odd parts. We write a partition $\lambda\in\cP^{odd}(N)$ as
\beqn
\lambda=(2\mu_1+1)+(2\mu_2+1)+\cdots+(2\mu_{s}+1)
\eeqn 
where $\mu_1\geq\mu_2\geq\cdots\geq\mu_{s}\geq 0$. Note that $s\equiv N\mod 2$. We set
\beqn
wt_\lambda=3^{\#\{1\leq j\leq (s-1)/2\,|\,\mu_{2j-1}=\mu_{2j}+1\}}\,4^{\#\{1\leq j\leq  (s-1)/2\,|\,\mu_{2j-1}\geq\mu_{2j}+2\}}\text{ when $N$ is odd};
\eeqn
\beqn
wt_\lambda=3^{\#\{1\leq j\leq s/2-1\,|\,\mu_{2j}=\mu_{2j+1}+1\}}4^{\#\{1\leq j\leq s/2-1\,|\,\mu_{2j}\geq\mu_{2j+1}+2\}}\text{ when $N$ is even}.
\eeqn
We then have
\beqn
\sum_{p=0}^{2n+1} b_{p,2n+1-p}=2\sum_{\lambda\in\cP^{odd}(2n+1)}wt_\lambda,\ \quad\sum_{p=0}^{n} b_{2p,2n-2p}=2\sum_{\lambda\in\cP^{odd}(2n)}wt_\lambda.
\eeqn

\begin{proposition}\label{prop-counting biorbital}
We have
\begin{subequations}
\beq\label{number of biorbital-B}
\sum_{p=0}^{2n+1}b_{p,2n+1-p}=\text{Coefficient of $x^{2n+1}$ in }2x\prod_{k\geq 1}(1+x^{4k})^2(1+x^{2k})^2
\eeq
\beq\label{number of biorbital-D}
\sum_{p=0}^{n}b_{2p,2n-2p}=\text{Coefficient of $x^{2n}$ in }\frac{1}{2}\prod_{k\geq 1}(1+x^{4k-2})^2(1+x^{2k})^2.
\eeq
\end{subequations}
\end{proposition}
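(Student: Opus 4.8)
The plan is to reduce the two asserted identities to a single pair of generating-function identities for partitions into odd parts weighted by $wt_\lambda$, and then to prove those by a telescoping evaluation of the partial sums (equivalently, by the $q$-series manipulations of Appendix~\ref{combinatorics}). The translation of $\sum_p b_{p,q}$ into $2\sum_\lambda wt_\lambda$ has already been recorded immediately above the statement, so what remains is to evaluate $\sum_{\lambda\in\cP^{odd}(N)}wt_\lambda$ as a coefficient in an explicit infinite product, treating $N$ odd and $N$ even separately.

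First I would set up a parametrization. Writing $\lambda\in\cP^{odd}(N)$ as $(2\mu_1+1,\dots,2\mu_s+1)$ with $\mu_1\ge\cdots\ge\mu_s\ge 0$ and $s\equiv N\bmod 2$, encode $\mu$ by the successive differences $d_i=\mu_i-\mu_{i+1}\ge 0$ for $1\le i\le s-1$ together with $d_s=\mu_s\ge 0$; these determine $\mu$ freely, with $\mu_i=\sum_{k\ge i}d_k$ and $N=s+2\sum_{k=1}^{s}k\,d_k$. Since $\mu_{2j-1}-\mu_{2j}=d_{2j-1}$, the weight $wt_\lambda$ factors as a product of local factors $w(d_i)$ over the odd indices $i\le s-2$ when $N$ is odd, resp. over the even indices $i\le s-2$ when $N$ is even, where $w(0)=1$, $w(1)=3$ and $w(d)=4$ for $d\ge 2$. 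The elementary identity driving everything is
\beqn
\sum_{d\ge 0}w(d)\,y^{d}=1+3y+\frac{4y^{2}}{1-y}=\frac{(1+y)^{2}}{1-y}.
\eeqn

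Next I would sum $x^{N}wt_\lambda$ over all $\lambda$, i.e. over all admissible $s$ and all tuples $(d_i)\in\bZ_{\ge 0}^{s}$. For fixed $s$ the variables separate, one factor per index: a weighted geometric series $(1+x^{2i})^{2}/(1-x^{2i})$ at the weighted indices and a plain $1/(1-x^{2i})$ elsewhere, plus the factor $1/(1-x^{2s})$ coming from $d_s$ and a prefactor $x^{s}$. The denominators collapse to $\prod_{i=1}^{s}(1-x^{2i})$; taking $s=2m+1$ in the odd case one gets
\beqn
\sum_{N\ \mathrm{odd}}\Bigl(\sum_{\lambda\in\cP^{odd}(N)}wt_\lambda\Bigr)x^{N}=\sum_{m\ge 0}\frac{x^{2m+1}\prod_{j=1}^{m}(1+x^{4j-2})^{2}}{\prod_{i=1}^{2m+1}(1-x^{2i})},
\eeqn
and $s=2m$ gives, in the even case, the series with $m$-th term $x^{2m}\prod_{j=1}^{m-1}(1+x^{4j})^{2}\big/\prod_{i=1}^{2m}(1-x^{2i})$ (the $m=0$ term being $1$). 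So it remains to identify the right-hand side above with $x\prod_{k\ge 1}(1+x^{4k})^{2}(1+x^{2k})^{2}=x\,(x^{8};x^{8})_{\infty}^{2}/(x^{2};x^{2})_{\infty}^{2}$, and the even sum with $\tfrac14\prod_{k\ge 1}(1+x^{4k-2})^{2}(1+x^{2k})^{2}$ (up to the constant term).

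This last identity is the one genuinely nontrivial step. I would prove it by producing a closed form for the partial sums $\sum_{m=0}^{M}(\cdots)$ --- of the shape (finite product)$\big/\prod_{i=1}^{2M+1}(1-x^{2i})$ --- and checking the one-step recursion, which after clearing denominators becomes a polynomial identity in $x^{2}$; letting $M\to\infty$ yields the infinite product. Since the companion Proposition~\ref{total number-BD} is already established via Dennis Stanton's Appendix~\ref{combinatorics}, the more economical route is instead to recognize the two displayed sums as instances of $q$-series identities of Durfee-rectangle type proved there. Either way, this $q$-series/telescoping identity is the main obstacle; granting it, one extracts the coefficient of $x^{2n+1}$ (resp. $x^{2n}$) and multiplies by $2$ to recover~\eqref{number of biorbital-B} and~\eqref{number of biorbital-D}.
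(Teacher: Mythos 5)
Your combinatorial reduction (steps 1--3) is correct, and it is in fact the same decomposition the paper uses: the appendix's ``columns of $\lambda$'' argument is exactly your difference parametrization (the number of columns of height $i$ is $2d_i$ for $i<s$ and $2d_s+1$ for $i=s$), both hinge on the identity $1+3y+\tfrac{4y^2}{1-y}=\tfrac{(1+y)^2}{1-y}$, and your two intermediate series coincide term-by-term with the ones displayed in \S\ref{proof of biorbital counting}. So up to this point you have reproduced the paper's proof in different language.

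The genuine gap is the final evaluation, which you explicitly defer (``granting it''). The paper finishes by writing $\prod_{i=1}^{2m+1}(1-x^{2i})=(x^2;x^4)_{m+1}(x^4;x^4)_m$, pulling out the prefactor, and recognizing a nonterminating ${}_2\phi_1$ that is summed by the $q$-Gauss theorem \eqref{q-gauss} (with $q\to x^4$, $a=b=-x^2$, $c=x^6$ in the odd case, and $a=b=-1$, $c=x^2$ in the even case), after which Euler's identity $(-x^2;x^2)_\infty=1/(x^2;x^4)_\infty$ gives the stated products. Neither of your proposed substitutes delivers this. Route (a) --- a closed product form for the partial sums verified by a one-step recursion --- is the standard device for \emph{terminating} $q$-sums; the series here are nonterminating, you exhibit no candidate closed form for $\sum_{m=0}^{M}$, and there is no reason to expect the partial sums to be products (the term ratio $q^2(1+q^{4M+2})^2/\bigl((1-q^{4M+4})(1-q^{4M+6})\bigr)$ does not telescope in any evident way). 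Route (b) appeals to ``$q$-series identities of Durfee-rectangle type proved there,'' but Appendix~\ref{combinatorics} proves no such identities; the only summation tools it uses are the $q$-binomial theorem, the $q$-Gauss theorem, and Ramanujan's ${}_1\psi_1$, and the identity you need is precisely $q$-Gauss. Since this evaluation is, as you yourself say, the main obstacle, the proof is incomplete without naming and applying it (or supplying an equivalent argument).
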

\begin{proof}
See \S\ref{proof of biorbital counting} in Appendix~\ref{combinatorics}.
\end{proof}

Let us further write
$
f_{m+1,m}=|\Theta_{m}^B|\text{ and }f_{m,m}=|\Theta_{m}^{D,0}|. 
$
It follows from~\eqref{generating for Hecke},~\eqref{number of fullsupp} and~\eqref{number of fullsupp D} that
\begin{subequations}
\beq\label{number of full support-B}
\sum_{m\geq 0}{f_{m+1,m}}x^{m}=\prod_{k\geq1}(1+x^{2k})^2(1+x^k)^2
\eeq
\beq\label{number of full support-D}
\sum_{m\geq 0}{f_{m,m}}x^{m}=\frac{1}{2}\prod_{k\geq1}(1+x^{2k-1})^2(1+x^k)^2.
\eeq
\end{subequations}
Note that in the above we have used the notation that $b_{0,0}=1/2$ and $f_{0,0}=1/2$.

Let $A'_{p,q}$ denote the number of IC sheaves in Theorem~\ref{thm-type BD}. Recall $r=\min\{p,q\}$. We have
\begin{subequations}
\beq\label{tn-B}
A'_{p,q}=\sum_{m=0}^{r}f_{m+1,m}\sum_{k=0}^{[\frac{r-m}{2}]}\pa(k)\,b_{p-m-2k,q-m-2k},\ \ \text{ if $p+q\equiv 1\mod 2$},
\eeq
\beq\label{tn-D1}
A'_{p,q}=2\sum_{m=0}^{\frac{r-1}{2}}f_{2m+1,2m+1}\sum_{k=0}^{\frac{r-2m-1}{2}}\pa(k)\,b_{p-2m-2k-1,q-2m-2k-1},\text{ if $p\equiv q\equiv 1\mod 2 $,}
\eeq
\beq\label{tn-D2}
A'_{p,q}=2\sum_{m=0}^{\frac{q}{2}}f_{2m,2m}\sum_{k=0}^{\frac{q-2m}{2}}\pa(k)\,b_{p-2m-2k,q-2m-2k}+\frac{3}{2}\pa(\frac{q}{2})\delta_{p,q},\text{ if $p\equiv q\equiv 0\mod 2 $}.
\eeq
\end{subequations}
To prove Theorem~\ref{thm-type BD}, it suffices to show that
\beq\label{identity-lr}
\sum_{p=0}^{2n+1}A_{p,2n+1-p}=\sum_{p=0}^{2n+1}A_{p,2n+1-p}',\qquad \sum_{p=0}^{2n}A_{p,2n-p}=\sum_{p=0}^{2n}A_{p,2n-p}'
\eeq
since by construction $A_{p,2n+1-p}'\leq A_{p,2n+1-p}$ and $A_{p,2n-p}'\leq A_{p,2n-p}$.
It then follows that
\beq\label{a=a'}
A_{p,q}=A_{p,q}'.
\eeq
In what follows we prove~\eqref{identity-lr}.
Setting $u=v=x$ in Proposition~\ref{total number-BD} we see that
\begin{subequations}
\beq\label{sum of apq-1}
\sum_{p=0}^{2n+1}A_{p,2n+1-p}=\text{Coefficient of $x^{2n+1}$ in }\frac{1}{2}\prod_{k\geq1}\frac{1}{1-x^{4k}}\prod_{m\geq 1}\frac{(1+x^{2m-1})^2}{(1-x^{2m-1})^2}
\eeq
\beq\label{sum of apq-2}
\sum_{p=0}^{2n}A_{p,2n-p}=\text{Coefficient of $x^{2n}$ in }\frac{1}{2}\prod_{k\geq1}\frac{1}{1-x^{4k}}\prod_{m\geq 1}\frac{(1+x^{2m-1})^2}{(1-x^{2m-1})^2}+\frac{3}{2}\prod_{k\geq1}\frac{1}{1-x^{4k}}.
\eeq
\end{subequations}
The equations~\eqref{number of biorbital-B},~\eqref{number of full support-B}, and~\eqref{tn-B} imply that 
\begin{subequations}
\begin{eqnarray}\label{a'pq-1}
\sum_{p=0}^{2n+1}A_{p,2n+1-p}'&=&\text{  coefficient of $x^{2n+1}$ in }\left(\prod_{k\geq1}(1+x^{4k})^2(1+x^{2k})^2\right)\\
&&\cdot\left(\prod_{k\geq 1}\frac{1}{1-x^{4k}}\right)\left(2x\prod_{m\geq 1}(1+x^{4m})^2(1+x^{2m})^2\right).\nonumber\end{eqnarray}
The equations~\eqref{number of biorbital-D},~\eqref{number of full support-D},~\eqref{tn-D1} and~\eqref{tn-D2} imply that 
\begin{eqnarray}\label{a'pq-2}
\sum_{p=0}^{2n}A_{p,2n-p}'&=&\text{ coefficient of $x^{2n}$ in } 2\left(\frac{1}{2}\prod_{k\geq1}(1+x^{4k-2})^2(1+x^{2k})^2\right)\\
&&\cdot\left(\prod_{k\geq 1}\frac{1}{1-x^{4k}}\right)\cdot\left(\frac{1}{2}\prod_{m\geq 1}(1+x^{4m-2})^2(1+x^{2m})^2\right)+\frac{3}{2}\prod_{k\geq 1}\frac{1}{1-x^{4k}}\nonumber.  \end{eqnarray}
\end{subequations}
Let us now write 
\beqn
F(x)=\prod_{m\geq 1}\frac{(1+x^{2m-1})^2}{(1-x^{2m-1})^2}.
\eeqn
Then~\eqref{identity-lr} follows from~\eqref{sum of apq-1},~\eqref{sum of apq-2},~\eqref{a'pq-1},~\eqref{a'pq-2} and the following identities
\begin{subequations}
\beq\label{Fodd}
F(x)-F(-x)=8\,x\,\prod_{k\geq 1}(1+x^{4k})^4(1+x^{2k})^4
\eeq
\beq\label{Feven}
F(x)+F(-x)=2\,\prod_{k\geq 1}(1+x^{4k-2})^4(1+x^{2k})^4.
\eeq
\end{subequations}
The proof of equations~\eqref{Fodd} and~\eqref{Feven} is given in \S\ref{functions} in Appendix~\ref{combinatorics}. This completes the proof of Theorem~\ref{thm-type BD}.

\begin{corollary}Let $(G,K)=(SO_N,S(O_p\times O_q))$.
\begin{enumerate}
\item The number of character sheaves 
\beqn
\text{$|\on{Char}_K(\Lg_1)|=$ coefficient of $x^q$ in }\frac{1}{1+x^{p-q}}\prod_{s\geq 1}\frac{1+x^s}{(1-x^s)^3}+\frac{3\delta_{p,q}}{2}\prod_{s\geq 1}\frac{1}{1-x^{2s}}.
\eeqn
\item Suppose that either $p$ or $q$ is even. The number of nilpotent support character sheaves,  
\beqn
\text{$|\on{Char}_K^\rn(\Lg_1)|=$  coefficient of $x^q$ in }\left\{\begin{array}{ll}\displaystyle{
\frac{1}{1+x^{p-q}}\prod_{s\geq 1}\frac{(1+x^{2s-1})^2}{(1-x^{2s})^2}}&\text{ if $N$ is odd},
\\
\displaystyle{\frac{1}{1+x^{p-q}}\prod_{s\geq 1}\frac{(1+x^{2s})^2}{(1-x^{2s})^2}}&\text{ if $p\equiv q\equiv 0\,\nmod 2$}.\end{array}\right.
\eeqn
\end{enumerate}
\end{corollary}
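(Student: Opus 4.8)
The plan is to deduce both formulas from the bivariate generating function identities already in hand, by re-organizing them so that the power of the variable records the rank $q$ while the ``defect'' $p-q$ enters only as a parameter.

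For part (1), recall from~\eqref{a=a'} that $|\on{Char}(\Lg,K)|=A_{p,q}$, and that $\sum_{p,q}2A_{p,q}u^pv^q-3\sum_q\pa(q)u^{2q}v^{2q}$ is the infinite product of Proposition~\ref{total number-BD}. I would substitute $u=t$, $v=x/t$. Then $u^pv^q=t^{p-q}x^q$, the correction term becomes $3\sum_q\pa(q)x^{2q}=3\prod_{s\geq1}(1-x^{2s})^{-1}$, which carries no $t$, and using $A_{p,q}=A_{q,p}$ one checks that for $d\geq1$ the coefficient of $t^d$ in the resulting Laurent series equals $\sum_{q\geq0}2A_{q+d,q}x^q$, while its $t^0$-coefficient equals $\sum_{q\geq0}2A_{q,q}x^q-3\prod_s(1-x^{2s})^{-1}$. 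On the right-hand side the same substitution collapses every factor, giving
\beqn
\left(\prod_{k\geq1}\frac{1}{1-x^{2k}}\right)\prod_{m\geq0}\frac{(1+tx^m)(1+x^{m+1}t^{-1})}{(1-tx^m)(1-x^{m+1}t^{-1})},
\eeqn
a Laurent product visibly invariant under $t\mapsto x/t$, reflecting $p\leftrightarrow q$. So everything reduces to the Laurent-coefficient identity
\beqn
[t^d]\prod_{m\geq0}\frac{(1+tx^m)(1+x^{m+1}t^{-1})}{(1-tx^m)(1-x^{m+1}t^{-1})}=\frac{2}{1+x^d}\prod_{s\geq1}\frac{(1+x^s)^2}{(1-x^s)^2}\qquad(d\geq0).
\eeqn
Granting this, multiplying by $\prod_k(1-x^{2k})^{-1}=\prod_k(1-x^k)^{-1}(1+x^k)^{-1}$ gives $\sum_q2A_{q+d,q}x^q=\tfrac{2}{1+x^d}\prod_s\tfrac{1+x^s}{(1-x^s)^3}$ for $d\geq1$, and for $d=0$, since the correction term sits inside the $t^0$-coefficient, $\sum_q2A_{q,q}x^q=\prod_s\tfrac{1+x^s}{(1-x^s)^3}+3\prod_s(1-x^{2s})^{-1}$. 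Taking $d=p-q$ and reading off the coefficient of $x^q$ then yields the formula of part (1), the extra summand $\tfrac32\prod_s(1-x^{2s})^{-1}$ surviving exactly when $p=q$, where it accounts for the pair of orbits attached to all-even partitions.

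The displayed Laurent-coefficient identity is the technical heart, and I expect it to be the main obstacle. It is of the type treated in Appendix~\ref{combinatorics}: it should follow either from a Ramanujan ${}_1\psi_1$-type bilateral summation applied to $\prod_{m\geq0}(1+tx^m)(1-tx^m)^{-1}$ together with its reciprocal-argument analogue, or — more in keeping with the rest of \S\ref{ssec-proof BD} — by re-deriving it from the same weighted sum over signed Young diagrams of signature $(p,q)$ that underlies Proposition~\ref{total number-BD}, now keeping $t$-track of $p-q$, and comparing coefficients. The delicate point is reconciling the $d=0$ case with the $\delta_{p,q}$-correction, i.e.\ checking that the constant term of the Laurent series produces precisely the two extra generating functions $\tfrac12\prod_s\tfrac{1+x^s}{(1-x^s)^3}$ and $\tfrac32\prod_s(1-x^{2s})^{-1}$.

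For part (2), by Theorems~\ref{nil thm-1} and~\ref{nil thm-2} one has $|\on{Char}(\Lg,K)^\rn|=b_{p,q}$ in the notation of \S\ref{ssec-proof BD}, with $b_{p,q}=\sum_{\lambda\in\on{SYD}^0}2^{|\Pi_{\cO_\lambda}|}=\sum_\lambda wt_\lambda$, a sum over partitions of $p+q$ into odd parts. I would first promote Proposition~\ref{prop-counting biorbital} to a bivariate statement: running the same argument, but using the signs $\epsilon_i$ to record which of $p$, $q$ each odd row contributes to, one obtains $\sum_{p,q}b_{p,q}u^pv^q$ as an explicit infinite product in $uv$, $u^{m+1}v^m$ and $u^mv^{m+1}$. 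The substitution $u=t$, $v=x/t$ and extraction of $[t^d]$ then proceed exactly as in part (1) — now with the parity restrictions coming from the all-odd-parts condition defining $\on{SYD}^0$ — producing $\tfrac1{1+x^d}\prod_{s\geq1}\tfrac{(1+x^{2s-1})^2}{(1-x^{2s})^2}$ when $p+q$ is odd and $\tfrac1{1+x^d}\prod_{s\geq1}\tfrac{(1+x^{2s})^2}{(1-x^{2s})^2}$ when $p$ and $q$ are both even, and reading off the coefficient of $x^q$ gives part (2). The product identities needed are again of the kind established in \S\ref{functions}, so the only genuinely new input is the refined, defect-sensitive version of the combinatorial count, which should be a routine adaptation of Stanton's argument.
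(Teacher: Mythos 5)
Your part (1) follows essentially the paper's own route: the paper obtains the formula by combining~\eqref{wtp_q} with Corollary~\ref{prop-total number-pq} of Appendix~\ref{combinatorics}, which is exactly the diagonal extraction from the bivariate product of Proposition~\ref{total number-BD}. The ``Laurent-coefficient identity'' you single out as the main obstacle is, after the substitution $u=t$, $v=x/t$ and clearing the factor $\prod_k(1-x^{2k})^{-1}$, literally the statement of Corollary~\ref{prop-total number-pq} (with $x=t^2$, $d=k$), and it is already proved there by the $q$-binomial theorem plus the $q$-Gauss summation~\eqref{q-gauss} --- one of the two methods you anticipate. So for (1) there is nothing new to supply; you could simply cite that corollary, and your bookkeeping of the $\delta_{p,q}$-correction (it carries no $t$ and so only enters the constant term in $t$) agrees with the paper.

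For part (2) you take a genuinely different route, and this is where the proposal is incomplete. The paper does \emph{not} refine Proposition~\ref{prop-counting biorbital} to a bivariate, defect-sensitive count. Instead it fixes the defect $p-q$, invokes the counting identities~\eqref{tn-B}--\eqref{tn-D2} together with $A_{p,q}=A'_{p,q}$ from~\eqref{a=a'} (i.e.\ the already-proved classification), feeds in part (1) and the full-support generating functions~\eqref{number of full support-B}--\eqref{number of full support-D}, and then \emph{solves} for $\sum_q b_{q+d,q}x^q$ by dividing generating functions --- no new combinatorics is needed. Your alternative requires a bivariate version of Stanton's count $\sum_{p,q}b_{p,q}u^pv^q$, and that is the entire content of the argument, not a ``routine adaptation'': the weights $3$ and $4$ in $wt_\lambda$ already aggregate over sign patterns whose individual contributions to $p-q$ differ (a row $2\mu_i+1$ contributes $\mu_i+1$ to one of $p,q$ and $\mu_i$ to the other, with the signs constrained by the parity conditions in Proposition~\ref{prop-Richardson}(ii) and entering the definition of $\Omega_\cO$), so those weights must first be disaggregated by defect before a column-by-column product formula and a $q$-Gauss evaluation can be run. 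This is plausibly doable --- the answer the paper obtains is a clean product --- but as written it is an unverified key step, whereas the paper's division trick gets (2) for free from results already in hand; if you want to keep your route, that refined count is the lemma you must actually prove.
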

\begin{proof}
We can assume that $p\geq q$. The case when $q< p$ follows readily. Part (1) follows from~\eqref{wtp_q} and  Corollary~\ref{prop-total number-pq} in Appendix~\ref{combinatorics}, as $|\on{Char}_K(\Lg_1)|=A_{p,q}$.

We prove part (2) in the case of $p$ and $q$ both even. The other case is entirely similar and simpler. Fix $2l=p-q\geq 0$. It follows from~\eqref{tn-D1},~\eqref{tn-D2} and~\eqref{a=a'} that 
\beqn
\sum_{q}A_{q+2l,q}\,x^{q}=2\left(\sum_{m}f_{m,m}x^{m}\right)\left(\sum_{k}\pa(k)x^{2k}\right)\left(\sum_{q}b_{2q+2l,2q}x^{2q}\right)+\frac{3}{2}\delta_{l,0}\sum_{q}\pa(q)x^{2q}.
\eeqn
By part (1), we have that
\beqn
\sum_{q}A_{q+2l,q}\,x^{q}=\frac{1}{1+x^{2l}}\prod_{s\geq 1}\frac{1+x^s}{(1-x^s)^3}+\frac{3}{2}\delta_{l,0}\prod_{s\geq 1}\frac{1}{1-x^{2s}},
\eeqn
Thus using~\eqref{number of full support-D}, we obtain 
\begin{eqnarray*}
\sum_{q}b_{2q+2l,2q}x^{2q}&=&\frac{1}{1+x^{2l}}\prod_{s\geq 1}\frac{(1+x^s)(1-x^{2s})}{(1-x^s)^3(1+x^{2s-1})^2(1+x^s)^2}=\frac{1}{1+x^{2l}}\prod_{s\geq 1}\frac{(1+x^{2s})^2}{(1-x^{2s})^2}.
\end{eqnarray*}
\end{proof}

\appendix

\section{Dual strata}
\label{dual strata}

We will describe the dual strata of nilpotent orbits. Let $G$ be a reductive algebraic group which for the purposes of this section can be assumed to be semisimple. We write $\Lg$ for its Lie algebra and $\cN$ for the nilpotent cone. We identify $\Lg^*$ with $\Lg$ via the Killing form. For each nilpotent $G$-orbit $\cO$ in $\cN$ we consider  its conormal bundle
\beqn
\Lambda_\cO = T^*_\cO\Lg =\{(x,y)\in \Lg\times\Lg\mid x\in\cO \ \ [x,y] = 0\}\,.
\eeqn
Consider the projection $\widetilde\cO $ of $\Lambda_\cO$ to the second coordinate:
\beqn
\widetilde\cO = \{y\in \Lg\mid \text{there exist an}\ \  x\in\cO\ \  \text{with} \ \ [x,y] = 0\}\,.
\eeqn
We construct an open (dense) subset $\widecheck \cO$ of $\widetilde\cO$ such that the projection $\Lambda_\cO \to \widetilde\cO$ has constant maximum rank above $\widecheck \cO$. Thus the $\widecheck \cO$ are subvarieties of $\Lg$ and they have the following property:
\begin{equation*}
\text{For any $\cF\in \op_G(\cN)$ the Fourier transform
$\fF(\cF)$ is smooth along all the  $\widecheck \cO$}\,.
\end{equation*}
This property follows from the fact that  the Fourier transform preserves the singular support.

 Before giving the general construction we make a few general comments. If $\cO=\{0\}$ is the zero orbit then $\widetilde\cO=\Lg$ and  $\widecheck \cO=\Lg^{rs}$, the set of regular semisimple elements in $\Lg$. The general description of  $\widecheck \cO$ will be similar.
 
Let  $\ft$ be a Cartan subspace of $\Lg$ and $W$ the Weyl group of $G$. Cconsider the adjoint quotient $\Lg \xrightarrow{f} \Lg\inv G\cong\Lt\slash W$. We have:
\beqn
\begin{CD}
\widetilde\cO @>{\tilde f}>> f(\widetilde\cO) @>{\sim}>>\Lt^\phi\slash W^\phi
\\
@VVV @VVV  @VVV
\\
\Lg @>{f}>> \Lg\inv G @>{\sim}>>\Lt\slash W
\end{CD}
\eeqn
where the vertical arrows are inclusions. 
We first analyze $f(\widetilde\cO)$ and explain the upper righthand corner. Let $\Lg^{ss}$ denote the set of semisimple elements in $\Lg$ and
\beqn
\widetilde\cO^{ss}\ = \  \{y\in \Lg^{ss}\mid \text{there exist an}\ \  x\in\cO\ \  \text{with} \ \ [x,y] = 0\}
\eeqn
denote the semisimple locus of $\widetilde\cO$.  Then we have $f(\widetilde\cO) = f(\widetilde\cO^{ss})$  because we can regard $\Lg\inv G\cong\Lt\slash W$ as consisting of the set of semisimple $G$-orbits in $\Lg$. 
Consider an element $e\in\cO$ and consider a corresponding $\fs\fl_2$-triple $\phi= (e,f,h)$. 
Then we have
\beqn
\Lg^e=\Lg^\phi\oplus\fu^e,
\eeqn
where
\beq\label{grading}
\Lg^\phi=\Lg^e\cap\Lg^h, \ \ \fu=\oplus_{i\geq 1}\Lg(i),\ \Lg(i)=\{z\in\Lg\,|\,[h,z]=iz\}.
\eeq
Recall that $\Lg^\phi$ is reductive and $\fu$ is nilpotent. In particular, the algebra $\Lg^\phi$ is the centralizer of the $\fs\fl_2$ given by $\phi= (e,f,h)$ and we write $G^\phi$ for the corresponding centralizer group.

We observe that any element in $\widetilde\cO^{ss}$ is $G$-conjugate to an element in $(\Lg^e)^{ss}$.  Thus, we see that:
\beqn
f(\widetilde\cO) = f(\widetilde\cO^{ss})=\widetilde\cO^{ss}/G=(\Lg^e)^{ss}/G^e\,.
\eeqn
Furthermore, any  element in $(\Lg^e)^{ss}$ is $G^e$-conjugate to an element in $(\Lg^\phi)^{ss}$. As $G^e = G^\phi \ltimes U^e$ is a semidirect product of a reductive group and a unipotent group it is easy to see that if two elements in $(\Lg^\phi)^{ss}$ are $G^e$-conjugate they are also $G^\phi$-conjugate. Thus we conclude that $(\Lg^e)^{ss}/G^e=(\Lg^\phi)^{ss}/G^\phi$. 
Finally, putting this all together, we get:
\beqn
f(\widetilde\cO) =(\Lg^e)^{ss}/G^e= (\Lg^\phi)^{ss}/G^\phi=\Lg^\phi\inv G^\phi\,.
\eeqn
Let $\Lt^\phi\subset\Lg^\phi$ be a maximal abelian subspace such that every semisimple element in $\Lg^\phi$ is $G^\phi$-conjugate to some element in $\Lt^\phi$. We choose $\Lt^\phi$ such that it lies in $\Lt$ and adjust $\ft$ so that  $h\in\ft$. Thus we have $\ft^\phi=\ft^e$. Writing $\Phi$ for the roots of $\Lg$ with respect to $\ft$ and writing $e=\sum_{\alpha\in\Phi}a_\alpha x_\alpha$, $x_\alpha\in\Lg_\alpha$, we have a concrete expression for $\ft^\phi$:
\beqn
\ft^\phi \ = \ \bigcap_{\alpha\in\Phi_e} \{t\in\ft\mid \alpha(t)=0\} \qquad \Phi_e=\{\alpha \in\Phi \mid a_\alpha \neq 0\}\ \ \text{with $e=\sum_{\alpha\in\Phi}a_\alpha x_\alpha$}\,,
\eeqn
i.e., $\ft^\phi$ is given by the intersection of the root hyperplanes for those roots that occur in the expression of $e$. 
Write 
\beqn
W^\phi=N_{G^\phi}(\Lt^\phi)/Z_{G^\phi}(\Lt^\phi). 
\eeqn
Thus, we conclude that 
\beqn
f(\widetilde\cO)=\Lg^\phi\inv G^\phi =\Lt^\phi/W^\phi.
\eeqn
Let us call the composition map $\tilde{f}:\widetilde{\cO}\to\Lt^\phi/W^\phi$. Note that the equality
$\widetilde\cO^{ss}/G= (\Lg^\phi)^{ss}/G^\phi$, which follows from the arguments above, shows that
\beq
\label{weyl equality}
W^\phi= N_{G^\phi}(\Lt^\phi)/Z_{G^\phi}(\Lt^\phi) = N_{G}(\Lt^\phi)/Z_{G}(\Lt^\phi) = N_{G}(L)/L \,,
\eeq
where $L=Z_{G}(\Lt^\phi)$. Note that $W^\phi$ is not necessarily a Coxeter group. 

In what follows we write $x=x_s+x_n$ for the decomposition of an element in its semisimple and nilpotent parts and then $[x_s,x_n]=0$. We now define 
\beqn
\widecheck\cO = \{y\in \widetilde\cO \mid y=y_s+y_n, \ \  \tilde{f}(y)\in (\Lt^\phi)^{rs}/W^\phi, \ \ y_n\in\cO\}\,;
\eeqn
here the $(\Lt^\phi)^{rs}$ stands for  elements in $\Lt^\phi$ that are regular semisimple in the ambient algebra $\Lg^\phi$. By construction, the group $G$ acts on $\widecheck\cO$. 
To show that $\widecheck\cO$  has the desired property we first claim:
\begin{lem} 
 If $x=x_s+x_n\in\Lg^e$ and $x_s\in(\Lt^\phi)^{rs}$, then $x_n\in\bar\cO$.
\end{lem}
\begin{proof}
First, recall that $e$ is distinguished if all  elements in $\Lg^e$ are nilpotent. By \cite[Theorem 1]{P}, if $e$ is distinguished and $x\in\Lg^e$, then $x\in\bar\cO$.

Let now $e$ be arbitrary. We have $x=x_s+x_n\in\Lg^e$ and $x_s\in(\Lt^\phi)^{rs}$. Then also $x_n\in\Lg^e$. But, now $G^{x_s}= Z_G(\ft^\phi)$ because $x_s$ is semisimple. As $[x_s,x_n]=0$,  $x_n\in Z_\Lg(\Lt^\phi)$. Consider the group $H=Z_G(\ft^\phi)/T^\phi$. Let us write $\bar e$  for $e$ regarded as a nilpotent element in $\fh=\operatorname{Lie}(H)$. The element $\bar e$ is a distinguished nilpotent in $\fh$. To see this it suffices to show that if $y\in Z_\Lg(\Lt^\phi)\cap\Lg^e$ is semisimple then $y\in\Lt^\phi$. Since  $\Lt^\phi$ is a maximal toral subalgebra of $\Lg^e$, $y\in\Lt^\phi$. But, now $G^{x_s}/T^\phi= H$  and $x_n$ can be viewed as an element in $\fh^{\bar e}$. As the $H$-orbit $\cO_{\bar e}$ of $\bar e$ is distinguished we see that $x_n$ lies in  $\overline{\cO_{\bar e}}$  and hence $x_n\in \bar\cO$.
\end{proof}

We now observe that $\tilde f^{-1}((\Lt^\phi)^{rs})$ is open dense in $\widetilde \cO$ and it remains to show that $\widecheck \cO$ is dense in  $\tilde f^{-1}((\Lt^\phi)^{rs})$. Let us decompose an element $y\in \tilde f^{-1}((\Lt^\phi)^{rs})$ into its semisimple and nilpotent parts $y=y_s+y_n$ and as $y\in \widetilde \cO$ there exists an element $x\in\cO$ such that $[x,y]=0$. Now, $G$ acts on $\tilde f^{-1}((\Lt^\phi)^{rs})$ and we can arrange the element $x$ to be $e$. Thus, up to $G$-action $y=y_s+y_n\in\Lg^e$. Thus, by the above lemma $y_n\in \bar\cO$. But, $\widecheck \cO$ consists of such elements with $y_n\in \cO$. Thus, $\widecheck \cO$ is open (and dense) in $\widetilde \cO$. It is now easy to check that  the projection $\Lambda_\cO \to \widetilde\cO$ has constant rank above  $\widecheck \cO$ and hence  $\widecheck \cO$ is a subvariety. 

This establishes a correspondence:
\beqn
\cO \leftrightarrow \widecheck\cO
\eeqn
Let us now analyze the equivariant fundamental group $\pi^G_1(\widecheck\cO)$. For an element $a\in (\Lt^\phi)^{rs}$ let $a'=a+e$ and $X_{a'}=Ga'$, the $G$-orbit of $a'$. We have the following exact sequence:
\beqn
1 \to \pi_1^G(X_{a'}) \to \pi_1^G(\widecheck\cO)\to B_{W^\phi}\to  1\,
\eeqn
where  $B_{W^\phi} = \pi_1((\Lt^\phi)^{rs}/W^\phi)$ denotes the braid group. We note that we use this terminology even when $W^\phi$ is not a Coxeter group. 
We have that 
\beqn
\pi_1^G(X_{a'}) \ = \ Z_G(a')/Z_G(a')^0\,.
\eeqn
Now, 
\beqn
Z_G(a')= Z_G(e+\Lt^\phi)=G^e\cap G^{\Lt^\phi}= (G^\phi\cdot U^e) \cap G^{\Lt^\phi}=Z_{G^\phi}({\Lt^\phi})\cdot (U^e\cap U^{\Lt^\phi})\,.
\eeqn
Thus:
\beqn
\pi_1^G(X_{a'}) \ = \ Z_{G^\phi}(\Lt^\phi)/Z_{G^\phi}(\Lt^\phi)^0\,.
\eeqn
We conclude that we have the following exact sequences:
\bern
&&1 \to Z_{G^\phi}(\Lt^\phi)/Z_{G^\phi}(\Lt^\phi)^0 \to \pi_1^G(\widecheck\cO)\to B_{W^\phi}\to  1\,\\
&&1 \to Z_{G^\phi}(\Lt^\phi)/Z_{G^\phi}(\Lt^\phi)^0 \to \pi_1^{G^\phi}((\Lg^\phi)^{rs})\to B_{W^\phi}\to  1\,.
\eern
The natural map $\ft^\phi \to \ft^\phi +e$ induces an embedding 
\beqn
(\Lg^\phi)^{rs} \subset \widecheck\cO \qquad gt\mapsto g(t+e) \qquad \text{for} \ \ \ t\in (\ft^\phi)^{rs} \ \ g\in G^\phi\,.
\eeqn
This embedding identifies $ \pi_1^G(\widecheck\cO)$ and $ \pi_1^{G^\phi}((\Lg^\phi)^{rs})$ and the two exact sequences above:
\beqn
\begin{CD}
1 @>>>  Z_{G^\phi}(\Lt^\phi)/Z_{G^\phi}(\Lt^\phi)^0@>>>\pi_1^{G^\phi}((\Lg^\phi)^{rs})@>{\tilde{q}}>> B_{W^\phi}@>>> 1
\\
@. @| @| @| @.
\\
1 @>>> Z_{G^\phi}(\Lt^\phi)/Z_{G^\phi}(\Lt^\phi)^0@>>>\pi_1^G( \widecheck\cO)@>{\tilde{q}}>> B_{W^\phi}@>>> 1\,.
\end{CD}
\eeqn

\section{The combinatorial formulas}\label{combinatorics}
\begin{center}
{\em by Dennis Stanton\footnote{School of Mathematics, University of Minnesota, USA. E-mail: \texttt{stant001@umn.edu}.}}
\end{center}
Let us write
$$
(A;q)_\infty=\prod_{s=0}^{\infty} (1-Aq^s),\ (A;q)_n=\frac{(A;q)_\infty}{(Aq^n;q)_\infty}= \prod_{i=0}^{n-1} (1-Aq^i),$$
$$ (A,B;q)_n= (A;q)_n (B;q)_n, \text{etc.}
$$
\subsection{Weighted type $(p,q)$ allowable signed Young diagrams}
Recall from \S\ref{ssec-proof BD} that $\on{SYD}_{p,q}$ denotes the set of signed Young diagrams that label the nilpotent $K$-orbits for the symmetric pair $(SO_N,S(O_p\times O_q))$.  For each $\lambda\in\on{SYD}_{p,q}$, we associate the weight $2^{r_\lambda+1}$ if $\lambda$ has at least one odd part and weight $1$ otherwise, where $r_\lambda$ is defined in~\eqref{component group-BD}. 
\begin{definition} Let $wt(p,q)$ denote the weighted sum of the type $(p,q)$ allowable signed Young diagrams, i.e., 
\beq\label{wt(p,q) def}
wt(p,q)=\sum_{\lambda\in\on{SYD}_{p,q}}2^{r_\lambda+1}-\pa(\frac{q}{2})\delta_{p,q},
\eeq
where $\pa(k)$ is the number of partitions of $k$ and we set $\pa(q/2)=0$ if $q$ is odd.
\end{definition}

\begin{prop}\label{prop-generating function-total}
 The generating function for $wt(p,q)$ is
$$
F(u,v)=\sum_{p,q=0}^\infty wt(p,q) u^p v^q=
\prod_{k=1}^\infty \frac{1}{1-u^{2k}v^{2k}}
\prod_{m=0}^\infty \frac{(1+u^{m+1}v^m)(1+u^mv^{m+1})}{(1-u^{m+1}v^m)(1-u^mv^{m+1})}.
$$
\end{prop}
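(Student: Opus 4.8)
\textbf{Proof plan for Proposition~\ref{prop-generating function-total}.}

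The plan is to set up a bijective/combinatorial generating-function argument, sorting the allowable signed Young diagrams by row-length, then tracking the weight contributions. First I would organize a signed Young diagram $\lambda\in\on{SYD}_{p,q}$ by grouping its rows according to length $\ell$. Rows of a fixed \emph{even} length $2k$ contribute, by the allowability condition, an equal number, say $j$, of rows beginning with $+$ and $j$ beginning with $-$; each such block of $2j$ rows occupies signature $(2kj,2kj)$, does not affect $r_\lambda$ at all (only odd parts do), and carries combinatorial weight $1$ per value of $j\geq 0$. Summing over $j\geq 0$ and over all even lengths $2k$ produces exactly the factor $\prod_{k=1}^\infty (1-u^{2k}v^{2k})^{-1}$. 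Rows of a fixed \emph{odd} length $2m+1$ contribute, say, $a$ rows beginning with $+$ (signature contribution $(2m+1)a$ split as $((m+1)a, ma)$) and $b$ rows beginning with $-$ (signature $((m+1)b \text{ on the }-\text{ side})$, i.e. $(mb,(m+1)b)$ in the $(p,q)$-bookkeeping). The pair $(a,b)$ ranges over $\bZ_{\geq0}^2$, and the monomial recorded is $u^{(m+1)a+mb}v^{ma+(m+1)b}$.

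Next I would handle the weighting. Recall from~\eqref{component group-BD} that $r_\lambda = (\#\{i:\lambda_i\text{ odd}, p_i>0\}) + (\#\{i:\lambda_i\text{ odd}, q_i>0\}) - 1$ when $\lambda$ has an odd part, so $2^{r_\lambda+1} = 2^{(\#\text{ odd-lengths with a }+\text{ row}) + (\#\text{ odd-lengths with a }-\text{ row})}$; and the weight is $1$ when there are no odd parts (matching the fact that a diagram with only even parts has $r_\lambda=0$ but is assigned weight $1$, not $2$). Thus each odd length $2m+1$ contributes an independent local factor: $1$ for the empty choice $(a,b)=(0,0)$, a factor of $2$ whenever $a>0$, and a factor of $2$ whenever $b>0$. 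The local generating function for a single odd length $2m+1$ is therefore
\[
G_m(u,v) = \sum_{a,b\geq 0} 2^{[a>0]+[b>0]} (u^{m+1}v^m)^a (u^mv^{m+1})^b
= \left(1 + \frac{2u^{m+1}v^m}{1-u^{m+1}v^m}\right)\left(1 + \frac{2u^mv^{m+1}}{1-u^mv^{m+1}}\right).
\]
A short simplification gives $G_m(u,v) = \dfrac{(1+u^{m+1}v^m)(1+u^mv^{m+1})}{(1-u^{m+1}v^m)(1-u^mv^{m+1})}$, and multiplying over all $m\geq 0$ together with the even-length factor yields the claimed product. The one genuine subtlety is the combinatorial definition of $wt(p,q)$ versus $2A_{p,q}$: by~\eqref{wt(p,q) def} and~\eqref{wtp_q}, the subtracted term $-\pa(q/2)\delta_{p,q}$ is exactly built into the definition of $wt(p,q)$, so no correction is needed on the generating-function side — the product above \emph{is} $\sum wt(p,q)u^pv^q$, and this is what the statement asserts.

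The main obstacle I anticipate is purely bookkeeping rather than conceptual: one must be scrupulous that the weight $2^{r_\lambda+1}$ really \emph{factorizes} across distinct odd row-lengths with the stated local factors, i.e. that the ``$-1$'' in the exponent of~\eqref{component group-BD} is correctly absorbed by assigning the all-even-parts diagrams weight $1$ (equivalently, by not having an extra global factor of $2$). A careful case check — the empty-odd-part case, the single-odd-length case, and the multi-odd-length case — confirms that $2^{r_\lambda+1} = \prod_{\text{odd lengths}} 2^{[a>0]}\,2^{[b>0]}$ with the convention that an empty product is $1$; this is precisely the identity that makes the Euler-product factorization legitimate. Once that is in place, the remaining algebraic simplification of $G_m$ and the assembly of the three infinite products is routine. (Alternatively, one may invoke Proposition~\ref{prop-total number-pq} and the relation~\eqref{wtp_q} to deduce the $u=v=x$ specialization, but the two-variable statement is cleanest proved directly as above.)
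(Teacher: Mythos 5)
Your proposal is correct and takes essentially the same route as the paper's proof: both factor $F(u,v)$ over row lengths, with each even length $2k$ contributing $1/(1-u^{2k}v^{2k})$ and each odd length $2m+1$ contributing a local factor governed by the weight $2^{[a>0]+[b>0]}$ for $a$ rows starting $+$ and $b$ rows starting $-$, the multiplicative factorization of $2^{r_\lambda+1}$ over odd lengths (with all-even diagrams receiving weight $1$, exactly as encoded by the subtraction in~\eqref{wt(p,q) def}) being the key point in both. Your evaluation of the odd-length factor as $\bigl(1+\tfrac{2x}{1-x}\bigr)\bigl(1+\tfrac{2y}{1-y}\bigr)=\tfrac{(1+x)(1+y)}{(1-x)(1-y)}$ is only a slight streamlining of the paper's computation, which groups the same terms by total multiplicity $r$ before summing.
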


\begin{proof} An odd part of $2m+1$ can occur, with multiplicity $0,1$, $2,\dots ,r,\dots$, which contributes
$$
\begin{aligned}
1+&2(u^{m+1}v^m+u^mv^{m+1})+(2u^{2(m+1)}v^{2m}+4u^{2m+1}v^{2m+1}+2u^{2m}v^{2(m+1)})+
\cdots \\
+&(2u^{r(m+1)}v^{rm}+4u^{rm+r-1}v^{rm+1}+\cdots+ 4u^{rm+1}v^{rm+r-1}+2u^{rm}v^{r(m+1)})+
\cdots\\
=& 1+2\sum_{r=1}^\infty (u^mv^m)^r\left( \frac{u^{r+1}-v^{r+1}}{u-v}+uv \frac{u^{r-1}-v^{r-1}}{u-v}
\right)
=
\frac{(1+u^{m+1}v^m)(1+u^mv^{m+1})}{(1-u^{m+1}v^m)(1-u^mv^{m+1})}.
\end{aligned}
$$
An even part $2k$, since it has even multiplicity, clearly gives
$$
\frac{1}{1-u^{2k}v^{2k}}.
$$
\end{proof}

Recall the $q$-Gauss theorem is
\beq\label{q-gauss}
\sum_{m=0}^\infty \frac{(a;q)_m (b;q)_m}{(q;q)_m (c;q)_m} \left( \frac{c}{ab}\right)^m=
\frac{(c/a;q)_\infty (c/b;q)_\infty}{(c;q)_\infty (c/ab;q)_\infty}.
\eeq

\begin{corollary} \label{prop-total number-pq}
Let $k\geq 0$. The generating function of the $k^{th}$ diagonal is
$$
F_{kdiag}(t)=\sum_{p=0}^\infty wt(p+k,p) t^{2p} =
\frac{2}{1+t^{2k}}
\frac{\prod_{m=1}^\infty (1+t^{2m})}
{\prod_{m=1}^\infty (1-t^{2m})^3}.
$$
\end{corollary}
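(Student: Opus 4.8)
The plan is to derive the Corollary from Proposition~\ref{prop-generating function-total} by a generating-function substitution followed by a single application of the $q$-Gauss theorem~\eqref{q-gauss}. First I would observe that if $F(u,v)=\sum_{p,q\ge 0}wt(p,q)u^pv^q$ denotes the two-variable series of Proposition~\ref{prop-generating function-total}, then the substitution $u=w$, $v=t^2/w$ sends the monomial $u^pv^q$ to $w^{p-q}t^{2q}$, so that extracting the coefficient of $w^k$ in $F(w,t^2/w)$ collects precisely the terms with $p-q=k$:
\[
F_{kdiag}(t)=\sum_{p\ge 0}wt(p+k,p)\,t^{2p}=[w^k]\,F(w,t^2/w).
\]
One small point worth stating explicitly: the coefficient of each power of $t$ in $F(w,t^2/w)$ is an honest power series in $w$ (not a Laurent polynomial — e.g.\ the $t^0$-term is $\sum_{p}wt(p,0)w^p$), but the extraction of $[w^k]$ is still well defined, and it will be computed below as a Cauchy product in which only finitely many terms contribute at each order in $t$.

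Next I would perform the substitution inside the product formula. Writing $q=t^2$, one has $u^{m+1}v^m=wq^m$, $u^mv^{m+1}=q^{m+1}/w$, and $u^{2k}v^{2k}=q^{2k}$, so
\[
F(w,q/w)=\frac{1}{(q^2;q^2)_\infty}\cdot\frac{(-w;q)_\infty\,(-q/w;q)_\infty}{(w;q)_\infty\,(q/w;q)_\infty}.
\]
I would then expand the two factors of the quotient by the $q$-binomial theorem $\tfrac{(az;q)_\infty}{(z;q)_\infty}=\sum_{n\ge0}\tfrac{(a;q)_n}{(q;q)_n}z^n$ with $a=-1$ (taking $z=w$ for one factor and $z=q/w$ for the other), form the Cauchy product, and read off, for $k\ge0$,
\[
[w^k]\,\frac{(-w;q)_\infty(-q/w;q)_\infty}{(w;q)_\infty(q/w;q)_\infty}=\sum_{m\ge0}\frac{(-1;q)_{m+k}}{(q;q)_{m+k}}\cdot\frac{(-1;q)_m}{(q;q)_m}\,q^m.
\]
Using $(-1;q)_{m+k}=(-1;q)_k(-q^k;q)_m$ and $(q;q)_{m+k}=(q;q)_k(q^{k+1};q)_m$, the $m$-independent factor $(-1;q)_k/(q;q)_k$ pulls out front and the remaining sum $\sum_{m\ge0}\tfrac{(-1;q)_m(-q^k;q)_m}{(q;q)_m(q^{k+1};q)_m}q^m$ is exactly a ${}_2\phi_1$ at its $q$-Gauss point $c/(ab)$ with $a=-1$, $b=-q^k$, $c=q^{k+1}$; hence~\eqref{q-gauss} evaluates it to $\tfrac{(-q^{k+1};q)_\infty(-q;q)_\infty}{(q^{k+1};q)_\infty(q;q)_\infty}$.

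Finally I would clean up: using $(q^{k+1};q)_\infty=(q;q)_\infty/(q;q)_k$, $(-q^{k+1};q)_\infty=(-q;q)_\infty/(-q;q)_k$, together with $(-1;q)_0/(-q;q)_0=1$ and, for $k\ge1$, $(-1;q)_k=2(-q;q)_{k-1}$ and $(-q;q)_k=(1+q^k)(-q;q)_{k-1}$, so that $(-1;q)_k/(-q;q)_k=2/(1+q^k)$ for every $k\ge0$. This collapses the displayed coefficient to $\tfrac{2}{1+q^k}\cdot\tfrac{(-q;q)_\infty^2}{(q;q)_\infty^2}$, and multiplying by the prefactor $1/(q^2;q^2)_\infty=1/\!\big((q;q)_\infty(-q;q)_\infty\big)$ and putting $q=t^2$ gives exactly $F_{kdiag}(t)=\tfrac{2}{1+t^{2k}}\cdot\tfrac{\prod_{m\ge1}(1+t^{2m})}{\prod_{m\ge1}(1-t^{2m})^3}$. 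The only genuine decision in the argument — and so the "hard part" — is recognizing the substituted infinite product as the quotient $\tfrac{(-w;q)_\infty(-q/w;q)_\infty}{(w;q)_\infty(q/w;q)_\infty}$ and seeing that, after the $(-1;q)_{m+k}$ splitting, its diagonal coefficient lands precisely on the $q$-Gauss point; everything after that is routine bookkeeping. (Alternatively, one can apply Ramanujan's ${}_1\psi_1$ summation directly to that quotient with $a=-1$, $b=-q$, $z=w$ — itself a classical consequence of $q$-Gauss — and obtain the coefficient $\tfrac{(-1;q)_k}{(-q;q)_k}\cdot\tfrac{(-q;q)_\infty^2}{(q;q)_\infty^2}$ in a single step.)
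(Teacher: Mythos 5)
Your proposal is correct and follows essentially the same route as the paper: rewrite $F(u,v)$ as $\frac{1}{(q^2;q^2)_\infty}\frac{(-u;q)_\infty(-v;q)_\infty}{(u;q)_\infty(v;q)_\infty}$ with $q=uv$, expand both quotients by the $q$-binomial theorem, read off the $k$-th diagonal (your $[w^k]$ extraction after $u=w$, $v=q/w$ is just a repackaging of this), and evaluate the resulting sum by $q$-Gauss at the same parameters before the identical simplification. The only difference is cosmetic bookkeeping, so there is nothing to add.
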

\begin{proof} 
We use the $q$-binomial theorem with $q=uv$ for $F(u,v)=\sum_{p,q=0}^\infty wt(p,q) u^p v^q$:
$$
\begin{aligned}
F(u,v)=& \frac{1}{(u^2v^2;u^2v^2)_\infty} \times 
\frac{(-u;uv)_\infty}{(u;uv)_\infty}\times 
\frac{(-v;uv)_\infty}{(v;uv)_\infty}\\
=&\frac{1}{(q^2;q^2)_\infty}\sum_{m=0}^\infty \frac{(-1;q)_m}{(q;q)_m} u^m
\sum_{m=0}^\infty \frac{(-1;q)_m}{(q;q)_m} v^m.
\end{aligned}
$$
So the $k$-th diagonal term is
$$
\frac{1}{(q^2;q^2)_\infty}\sum_{m=0}^\infty \frac{(-1;q)_{m+k}(-1;q)_m}{(q;q)_{m+k} (q;q)_m} q^m.$$
This sums by the $q$-Gauss theorem to
$$
\begin{aligned}
&=\frac{1}{(q^2;q^2)_\infty}
\frac{(-1;q)_k}{(q;q)_k}\sum_{m=0}^\infty \frac{(-q^k;q)_{m}(-1;q)_m}{(q^{k+1};q)_{m} (q;q)_m} q^m\\
&=\frac{1}{(q^2;q^2)_\infty}
\frac{(-1;q)_k}{(q;q)_k}
\frac{(-q;q)_\infty (-q^{k+1},q)_\infty}{(q^{k+1};q)_\infty (q;q)_\infty}=\frac{2}{1+q^k}
\frac{(-q;q)_\infty}{(q;q)_\infty^3},
\end{aligned}
$$
which is the given answer with $q=t^2.$
\end{proof}

\subsection{Proof of Proposition~\ref{prop-counting biorbital}}\label{proof of biorbital counting}
Recall that $\cP^{odd}(N)$ denotes the set of partitions of $N$ into odd parts. We write a partition $\lambda\in\cP^{odd}(N)$ as
\beqn
\lambda=(2\mu_1+1)+(2\mu_2+1)+\cdots+(2\mu_{s}+1)
\eeqn 
where $\mu_1\geq\mu_2\geq\cdots\geq\mu_{s}\geq 0$. Note that $s\equiv N\mod 2$. We set
\beqn
wt_\lambda=3^{\#\{1\leq j\leq (s-1)/2\,|\,\mu_{2j-1}=\mu_{2j}+1\}}\,4^{\#\{1\leq j\leq  (s-1)/2\,|\,\mu_{2j-1}\geq\mu_{2j}+2\}}\text{ when $N$ is odd};
\eeqn
\beqn
wt_\lambda=3^{\#\{1\leq j\leq s/2-1\,|\,\mu_{2j}=\mu_{2j+1}+1\}}4^{\#\{1\leq j\leq s/2-1\,|\,\mu_{2j}\geq\mu_{2j+1}+2\}}\text{ when $N$ is even}.
\eeqn
Let us write 
\beqn
b_n=\sum_{\lambda\in\cP^{odd}(2n+1)}wt_\lambda,\ \ c_n=\sum_{\lambda\in\cP^{odd}(2n)}wt_\lambda.
\eeqn

\subsubsection{Equation~\eqref{number of biorbital-B}}\label{counting-biorbital-B}The equation~\eqref{number of biorbital-B} is equivalent to
\beqn
\sum_{n=0}^\infty b_n q^{2n+1}=q (-q^4;q^4)_\infty^2 (-q^2;q^2)_\infty^2.
\eeqn
This can be seen as follows. Suppose $\lambda$ with odd parts has an odd number of parts, say $2k+1$. Consider the columns of $\lambda$, which have possible sizes $1,2,\cdots, 2k+1.$

The part $2k+1$ occurs an odd number of times, the generating function is
$$
\frac{q^{2k+1}}{1-q^{4k+2}}.
$$
The part $2k$ occurs an even number of times, the generating function is
$$
\frac{1}{1-q^{4k}}.
$$
The part $2k-1$ occurs an even number of times, the weighted generating function is
$$
1+3q^{4k-2}+\frac{4q^{8k-4}}{1-q^{4k-2}}.
$$
This continues down to part size 1, to obtain the generating function
$$
\sum_{n=0}^\infty b_n q^{2n+1} =
\sum_{k=0}^\infty \frac{q^{2k+1}}{1-q^{4k+2}}
\frac{1}{(1-q^4)(1-q^8)\cdots (1-q^{4k})}
\prod_{i=1,{\text{odd}}}^{2k-1} \left(1+3q^{2i}+\frac{4q^{4i}}{1-q^{2i}}\right).
$$
Because
$$
1+3x+\frac{4x^2}{1-x}=\frac{(1+x)^2}{1-x}
$$
this may be written as
\beqn
\sum_{n=0}^\infty b_n q^{2n+1} =
\sum_{k=0}^\infty \frac{q^{2k+1}}{1-q^{4k+2}}
\frac{(-q^2;q^4)_k (-q^2;q^4)_k}{(q^4;q^4)_k (q^2;q^4)_k}=\frac{q}{1-q^2}\sum_{k=0}^{\infty}\frac{(-q^2;q^4)_k(-q^2;q^4)_k}{(q^6;q^4)_k(q^4;q^4)_k}q^{2k}.
\eeqn
Applying $q$-Gauss theorem~\eqref{q-gauss} with $q\rightarrow q^4$ and $a=b=-q^2,c=q^6$, we obtain
\beqn
\sum_{n=0}^\infty b_n q^{2n+1} =\frac{q}{1-q^2} \frac{(-q^4;q^4)_\infty (-q^4;q^4)_\infty}{(q^6;q^4)_\infty (q^2;q^4)_\infty}=q (-q^4;q^4)_\infty^2 (-q^2;q^2)_\infty^2.
\eeqn
 Here we have used the number of partitions of $n$ into odd parts equals the number of partitions of $n$ into distinct parts, i.e.,
\beqn
(-q;q)_\infty=1/(q;q^2)_\infty.
\eeqn

\subsubsection{Equation~\eqref{number of biorbital-D}}The equation~\eqref{number of biorbital-D} is equivalent to
\beqn
\sum_{n=0}^\infty c_n q^{2n}=\frac{1}{4}(-q^2;q^4)_\infty^2 (-q^2;q^2)_\infty.
\eeqn
Assume $\lambda$ is a partition into odd parts with $2k$ parts. We argue as 
in \S\ref{counting-biorbital-B}, this time the even parts have weights.
Thus
\begin{eqnarray*}
\sum_{n=0}^\infty c_n q^{2n} &=&
\sum_{k=0}^\infty \frac{q^{2k}}{1-q^{4k}}
\frac{1}{(1-q^2)(1-q^6)\cdots (1-q^{2(2k-1)})}
\prod_{i=1,{\text{even}}}^{2k-2} \left(1+3q^{2i}+\frac{4q^{4i}}{1-q^{2i}}\right).
\\
& =&
\sum_{k=0}^\infty \frac{q^{2k}}{1-q^{4k}}
\frac{(-q^4;q^4)_{k-1} (-q^4;q^4)_{k-1}}{(q^4;q^4)_{k-1} (q^2;q^4)_k}=
\frac{1}{4}\sum_{k=0}^\infty\frac{(-1;q^4)_k(-1;q^4)_k}{(q^2;q^4)_k(q^4;q^4)_k}q^{2k}
\\
&=&\frac{1}{4} \frac{(-q^2;q^4)_\infty (-q^2;q^4)_\infty}
{(q^2;q^4)_\infty (q^2;q^4)_\infty}= \frac{1}{4}(-q^2;q^4)_\infty^2 (-q^2;q^2)_\infty.
\end{eqnarray*}

\subsection{Proof of equations~\eqref{Fodd} and~\eqref{Feven}}\label{functions}

\begin{definition} \texorpdfstring{}{Lg}
Let
$$
F(q)=\frac{(-q;q^2)_\infty^2}{(q;q^2)_\infty^2}.
$$
\end{definition}

The equations~\eqref{Fodd} and~\eqref{Feven} are
\begin{prop}
$$
F(q)-F(-q)= 8q (-q^4;q^4)_\infty^4 (-q^2;q^2)_\infty^4\ \text{ and }\ F(q)+F(-q)=2(-q^2;q^4)_\infty^4 (-q^2;q^2)_\infty^4.
$$
\end{prop}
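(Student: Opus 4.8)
The plan is to recast $F$ in terms of Ramanujan's theta functions $\varphi(q)=\sum_{n\in\mathbb{Z}}q^{n^2}$ and $\psi(q)=\sum_{n\ge 0}q^{n(n+1)/2}$, whose product forms $\varphi(q)=(q^2;q^2)_\infty(-q;q^2)_\infty^2$ and $\psi(q)=(q^2;q^2)_\infty/(q;q^2)_\infty$ come from Jacobi's triple product. A short manipulation using Euler's identity $1/(q;q^2)_\infty=(-q;q)_\infty$ and $(1-q^k)(1+q^k)=1-q^{2k}$ gives the clean formula
\[
F(q)\;=\;\frac{\varphi(q)^2}{\varphi(-q^2)^2},\qquad\text{where }\varphi(-q^2)=\frac{(q^2;q^2)_\infty^2}{(q^4;q^4)_\infty}.
\]
Since $\varphi(-q^2)$ is invariant under $q\mapsto -q$, we get $F(-q)=\varphi(-q)^2/\varphi(-q^2)^2$, hence
\[
F(q)\pm F(-q)\;=\;\frac{\varphi(q)^2\pm\varphi(-q)^2}{\varphi(-q^2)^2}.
\]

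\textbf{The numerators via parity splitting.} Splitting $\sum_n q^{n^2}$ according to the parity of $n$ yields the elementary identities $\varphi(q)+\varphi(-q)=2\varphi(q^4)$ and $\varphi(q)-\varphi(-q)=4q\,\psi(q^8)$. For the sum I would use the bijection $\mathbb{Z}^2\to\{(m,n):m\equiv n\ (2)\}$, $(a,b)\mapsto(a+b,a-b)$, under which $m^2+n^2=2a^2+2b^2$, giving $\varphi(q)^2+\varphi(-q)^2=2\sum_{m\equiv n}q^{m^2+n^2}=2\varphi(q^2)^2$. For the difference, restricting to $m^2+n^2$ odd and factoring the double sum gives $\varphi(q)^2-\varphi(-q)^2=8q\,\varphi(q^4)\psi(q^8)$; then the standard identity $\varphi(q)\psi(q^2)=\psi(q)^2$ (immediate from the product forms, exactly as in the first paragraph) applied with $q\mapsto q^4$ turns this into $\varphi(q)^2-\varphi(-q)^2=8q\,\psi(q^4)^2$. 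Substituting these two evaluations into the display above leaves
\[
F(q)+F(-q)=\frac{2\varphi(q^2)^2}{\varphi(-q^2)^2},\qquad
F(q)-F(-q)=\frac{8q\,\psi(q^4)^2}{\varphi(-q^2)^2}.
\]

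\textbf{The final product identities.} It remains to verify the two eta-quotient identities
\[
\frac{\varphi(q^2)^2}{\varphi(-q^2)^2}=(-q^2;q^4)_\infty^4(-q^2;q^2)_\infty^4,\qquad
\frac{\psi(q^4)^2}{\varphi(-q^2)^2}=(-q^4;q^4)_\infty^4(-q^2;q^2)_\infty^4 .
\]
Each becomes, after taking a square root and inserting $\varphi(q^2)=(q^4;q^4)_\infty(q^2;q^4)_\infty^2$, $\varphi(-q^2)=(q^2;q^2)_\infty^2/(q^4;q^4)_\infty$, $\psi(q^4)=(q^8;q^8)_\infty^2/(q^4;q^4)_\infty$, a routine rearrangement of infinite products: both sides reduce to $(q^8;q^8)_\infty^2/(q^2;q^2)_\infty^2$ after splitting indices mod $2$ and applying Euler's identity $1/(q;q^2)_\infty=(-q;q)_\infty$ twice. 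Plugging these back gives precisely the stated formulas for $F(q)\pm F(-q)$, which are equations~\eqref{Fodd} and~\eqref{Feven}.

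\textbf{Main obstacle.} The only genuinely non-mechanical ingredients are the classical theta identities $\varphi(q)^2+\varphi(-q)^2=2\varphi(q^2)^2$ and $\varphi(q)\psi(q^2)=\psi(q)^2$ (plus the two parity splittings), so a self-contained write-up must include their one-line derivations sketched above; everything else is bookkeeping with $q$-products. An alternative, keeping the appendix uniform, is to bypass $\varphi,\psi$ entirely: write $F(q)\pm F(-q)=(-q^2;q^2)_\infty^2\big[(-q;q^2)_\infty^4\pm(q;q^2)_\infty^4\big]$ using $F(q)=(-q;q^2)_\infty^4(-q^2;q^2)_\infty^2$, and evaluate the bracket directly by the $q$-Gauss theorem~\eqref{q-gauss} with base $q^4$, in the same spirit as the proof of Proposition~\ref{prop-counting biorbital}; I expect the $\varphi,\psi$ route to be shorter to present.
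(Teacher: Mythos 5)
Your argument is correct, and it follows a genuinely different route from the appendix. The paper's proof (Stanton) first converts $F$ into the Lambert series $\sum_{k\in\bZ} q^k/(1+q^{2k})$ via Ramanujan's $_1\psi_1$ summation with base $q^2$, then $2$-dissects that series in $k$ and resums the odd and even parts by two further applications of $_1\psi_1$ with base $q^4$; you instead write $F(q)=\varphi(q)^2/\varphi(-q^2)^2$ and invoke the classical theta dissections $\varphi(q)^2+\varphi(-q)^2=2\varphi(q^2)^2$ and $\varphi(q)^2-\varphi(-q)^2=8q\,\varphi(q^4)\psi(q^8)=8q\,\psi(q^4)^2$, finishing with product bookkeeping. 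I checked the key steps: $F(q)=\varphi(q)^2/\varphi(-q^2)^2$ does hold (equivalently $F(q)=\varphi(q)/\varphi(-q)$ together with $\varphi(q)\varphi(-q)=\varphi(-q^2)^2$), the lattice-parity derivations of the two squared dissections are sound, $\varphi(q)\psi(q^2)=\psi(q)^2$ is a valid one-line consequence of the product forms, and the two final eta-quotient identities are true. What each approach buys: yours needs only the Jacobi triple product (for the product forms of $\varphi,\psi$) and elementary parity arguments, so it is arguably more self-contained and avoids bilateral series altogether; the paper's stays uniform with the rest of Appendix~\ref{combinatorics}, which already runs on $q$-Gauss and $_1\psi_1$, and needs no theta-function dictionary. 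One small slip in your "routine" final step: after taking square roots, only the second identity, $\psi(q^4)^2/\varphi(-q^2)^2=(-q^4;q^4)_\infty^4(-q^2;q^2)_\infty^4$, reduces on both sides to $(q^8;q^8)_\infty^2/(q^2;q^2)_\infty^2$; the first one does not, but it is even easier — since $(-q^2;q^2)_\infty=1/(q^2;q^4)_\infty$ by Euler, its right-hand side is literally $(-q^2;q^4)_\infty^4/(q^2;q^4)_\infty^4=\bigl(\varphi(q^2)/\varphi(-q^2)\bigr)^2$ — so the conclusion is unaffected; just state the two verifications separately in a final write-up.
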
 

Recall Ramanujan's $_1\psi_1$ formula
\begin{prop}
$$
_1\psi_1(a,b;q;x)=:\sum_{n=-\infty}^\infty \frac{(a;q)_n}{(b;q)_n} x^n=
\frac{(ax,q/ax,q,b/a;q)_\infty}{(x,b/ax,b,q/a;q)_\infty}.
$$
for $|b/a|<|x|<1.$
\end{prop}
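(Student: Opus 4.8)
The statement to be proved is Ramanujan's $_1\psi_1$ summation formula
$$
{}_1\psi_1(a,b;q;x)=\sum_{n=-\infty}^\infty \frac{(a;q)_n}{(b;q)_n}x^n
=\frac{(ax,q/ax,q,b/a;q)_\infty}{(x,b/ax,b,q/a;q)_\infty},
\qquad |b/a|<|x|<1.
$$
The approach I would take is the standard analytic-continuation argument originating with Ishikawa--Askey, which deduces the bilateral sum from the $q$-binomial theorem (equivalently, from the unilateral summation already available in the excerpt, namely the $q$-Gauss theorem~\eqref{q-gauss}). First I would fix $|q|<1$ and view both sides as functions of the variable $b$. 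The right-hand side is manifestly a ratio of convergent infinite products, hence meromorphic in $b$ on $\mathbb C$; the left-hand side, for $|b/a|<|x|<1$, converges absolutely and uniformly on compact subsets of the relevant $b$-region and defines an analytic function there. The key observation is that when $b=q$ the bilateral series collapses to a unilateral one: $(a;q)_n/(q;q)_n=0$ for $n<0$, so ${}_1\psi_1(a,q;q;x)=\sum_{n\ge 0}\frac{(a;q)_n}{(q;q)_n}x^n$, which by the $q$-binomial theorem equals $(ax;q)_\infty/(x;q)_\infty$; one checks this matches the right-hand side at $b=q$. More generally I would evaluate the series at $b=q^{k+1}$ for each integer $k\ge 0$, where again only finitely many negative-index terms survive in a controlled way, reducing the identity at those points to the $q$-binomial/$q$-Gauss summation.

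The core of the argument is then a rigidity step: I would show that the function
$$
G(b):=\frac{(x,b/ax,b,q/a;q)_\infty}{(ax,q/ax,q,b/a;q)_\infty}\cdot {}_1\psi_1(a,b;q;x)-1
$$
is analytic in $b$ on a suitable domain, is annihilated at the infinitely many points $b=q^{k+1}$ which accumulate at $0$, and is bounded (or periodic in an appropriate multiplicative sense). Concretely, a clean way to organize this is to prove a $q$-difference equation: let $L(b)$ denote the left-hand side and $R(b)$ the right-hand side; a short manipulation of the series $L(b)$, splitting off the $n=0$ term or shifting the summation index $n\mapsto n-1$, shows that $L(b)$ satisfies a first-order $q$-difference equation in $b$ of the form $L(qb)=\phi(b)L(b)$ for an explicit rational function $\phi$, and a direct computation with the product formula shows $R(b)$ satisfies the \emph{same} equation. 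Hence $L(b)/R(b)$ is invariant under $b\mapsto qb$; being analytic and non-vanishing on an annular region and $q$-periodic, it extends to an elliptic-type function which, combined with its value $1$ at $b=q$ (from the $q$-binomial theorem), must be identically $1$. This yields the identity for $b$ in the region of convergence, and then the general case follows by analytic continuation in $a$ and $x$ within $|b/a|<|x|<1$.

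The main obstacle I anticipate is the careful bookkeeping in the $q$-difference/continuation step: one must verify that $L(b)$ genuinely satisfies the asserted functional equation (the index shift produces a factor $(1-aq^{n-1})/(1-bq^{n-1})$ that has to be summed back into ${}_1\psi_1(a,b;q;x)$ with the correct prefactor), and that the region $|b/a|<|x|<1$ is nonempty and connected so that analytic continuation is legitimate; one also needs the poles of $R(b)$ (at $b=q^{-m}a$ and $b=q^{-m}$) to lie outside the domain where $L(b)$ is being compared, so that $L/R$ is genuinely holomorphic and non-vanishing there. A second, more routine obstacle is simply matching the four infinite products on the right-hand side against the $q$-binomial evaluation at $b=q$ to pin down the normalizing constant; this is a finite computation using $(q;q)_\infty$ telescoping but must be done with care about which Pochhammer symbols cancel. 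Since the excerpt already grants the $q$-Gauss theorem~\eqref{q-gauss}, I would in fact route the unilateral input through $q$-Gauss rather than re-deriving the $q$-binomial theorem, which shortens the normalization step.
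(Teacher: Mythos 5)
The paper does not actually prove this proposition: it is recalled as Ramanujan's classical ${}_1\psi_1$ summation and used as a black box (only the $q$-Gauss theorem \eqref{q-gauss} and the ${}_1\psi_1$ formula are quoted, then specialized). So there is no internal argument to compare with; what you supply is the standard analytic-continuation proof (due to Ismail, with a related elementary proof by Andrews--Askey --- your attribution ``Ishikawa--Askey'' is a slip), and in outline it is sound: for fixed $a,x,q$ both sides are analytic in $b$ on the disc $|b|<\min(1,|ax|)$; at $b=q^{k+1}$ the bilateral series truncates to $n\ge -k$, and after reindexing $n\mapsto n-k$ it is summed by the $q$-binomial theorem (or by \eqref{q-gauss}), matching the product side after a finite Pochhammer computation; since the points $q^{k+1}$ accumulate at $0$ inside the domain, the identity theorem gives equality on the disc, and analytic continuation in the remaining variables covers $|b/a|<|x|<1$.

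One caution on your alternative organization: the claim that $L(b)={}_1\psi_1(a,b;q;x)$ satisfies a first-order $q$-difference equation purely in $b$, $L(qb)=\phi(b)L(b)$ with $\phi$ rational, is not correct as stated. Writing $(b;q)_n=(1-b)(bq;q)_n/(1-bq^n)$ gives the contiguous relation $(1-b)\,{}_1\psi_1(a,b;q;x)={}_1\psi_1(a,bq;q;x)-b\,{}_1\psi_1(a,bq;q;qx)$, i.e.\ the shift $b\mapsto qb$ is inevitably coupled with $x\mapsto qx$; the clean single-variable functional equation holds in $x$, not in $b$. Moreover, even granting such an equation, a $q$-periodic ratio $L/R$ is an elliptic-type function, and agreement with $1$ at the single point $b=q$ does not force constancy --- you would need holomorphy (no poles) on the whole annulus modulo $b\sim qb$ before invoking Liouville. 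Since your primary route (evaluation at $b=q^{k+1}$ plus the identity theorem) does not use this step, the plan as a whole stands, but the $q$-difference variant would need to be reworked along the $x$-shift rather than the $b$-shift.
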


Letting $q\rightarrow q^2,$ $a=-1, b=-q^2, x=q$ in Ramanujan's $_1\psi_1$ gives
$$
\sum_{k=-\infty}^\infty \frac{q^k}{1+q^{2k}}=\frac{1}{2}\  _1\psi_1(-1,-q^2;q^2;q)=
\frac{1}{2}\frac{(-q,-q,q^2,q^2;q^2)_\infty}{(q,q,-q^2,-q^2;q^2)_\infty}=
\frac{1}{2}F(q) \frac{(q^2;q^2)_\infty^2}{(-q^2;q^2)_\infty^2}.
$$

We take the odd terms in the sum
$$
\sum_{k=-\infty, odd}^\infty \frac{q^k}{1+q^{2k}}=
\frac{1}{4}\left( F(q)-F(-q)\right) \frac{(q^2;q^2)_\infty^2}{(-q^2;q^2)_\infty^2}
$$
and the even terms in the sum
$$
\sum_{k=-\infty, even}^\infty \frac{q^k}{1+q^{2k}}=
\frac{1}{4}\left( F(q)+F(-q)\right) \frac{(q^2;q^2)_\infty^2}{(-q^2;q^2)_\infty^2}.
$$
However
\begin{eqnarray*}
\sum_{k=-\infty, odd}^\infty \frac{q^k}{1+q^{2k}}&=&
q\frac{1}{1+q^2}\  _1\psi_1(-q^2,-q^6;q^4;q^2)\\
&=&
q\frac{1}{1+q^2} \frac{(-q^4,-1,q^4,q^4;q^4)_\infty}{(q^2,q^2,-q^6,-q^2;q^4)_\infty}
=
2q \frac{(-q^4,-q^4,q^4,q^4;q^4)_\infty}{(q^2,q^2,-q^2,-q^2;q^4)_\infty},
\\
\sum_{k=-\infty, even}^\infty \frac{q^k}{1+q^{2k}}&=&
\frac{1}{2}\  _1\psi_1(-1,-q^4;q^4;q^2)=
\frac{1}{2} \frac{(-q^2,-q^2,q^4,q^4;q^4)_\infty}{(q^2,q^2,-q^4,-q^4;q^4)_\infty}.
\end{eqnarray*}
So 
\begin{eqnarray*}
\frac{1}{4}\left( F(q)-F(-q)\right)&=&
2q \frac{(-q^4,-q^4,q^4,q^4;q^4)_\infty}{(q^2,q^2,-q^2,-q^2;q^4)_\infty}
\frac{(-q^2;q^2)_\infty^2}{(q^2;q^2)_\infty^2}= 2q  \frac{(-q^4;q^4)_\infty^2}{(q^2;q^4)_\infty^2} \frac{(-q^4;q^4)_\infty^2}{(q^2;q^4)_\infty^2}\\&= &2q \frac{(-q^4;q^4)_\infty^4}{(q^2;q^4)_\infty^4}
= 2q (-q^4;q^4)_\infty^4 (-q^2;q^2)_\infty^4\\
\frac{1}{4}\left( F(q)+F(-q)\right)&=&\frac{1}{2} \frac{(-q^2,-q^2,q^4,q^4;q^4)_\infty}{(q^2,q^2,-q^4,-q^4;q^4)_\infty}\frac{(-q^2;q^2)_\infty^2}{(q^2;q^2)_\infty^2}
=\frac{1}{2}(-q^2;q^4)_\infty^4 (-q^2;q^2)_\infty^4.
\end{eqnarray*}

\end{document}